\documentclass{compositio}

\usepackage{lmodern}
\usepackage{dsfont}
\usepackage[centertags]{amsmath}
\usepackage[usenames,dvipsnames]{xcolor}
\usepackage[colorlinks=true,linkcolor=Red,urlcolor=NavyBlue,citecolor=blue]{hyperref}
\usepackage{amsfonts}
\usepackage{MnSymbol}
\usepackage{amsthm}
\usepackage{newlfont}
\usepackage{amscd}
\usepackage{amsmath}
\usepackage{enumerate}

\usepackage[all,2cell]{xy}
\usepackage{tikz-cd}
\usetikzlibrary{positioning, calc}
\usepackage{mathrsfs}
\UseAllTwocells
\input xy
\xyoption{2cell}
\xyoption{all}
\usepackage{verbatim}
\usepackage{eucal}
\usepackage{graphicx}

\usepackage[T1]{fontenc}

\usepackage{setspace}

\usepackage{pict2e}

\makeatletter
\newcommand{\adjunction}{\@ifstar\named@adjunction\normal@adjunction}
\newcommand{\normal@adjunction}[4]{%
	#1\colon #2%
	\mathrel{\vcenter{%
			\offinterlineskip\m@th
			\ialign{%
				\hfil$##$\hfil\cr
				\longrightharpoonup\cr
				\noalign{\kern-.3ex}
				\smallbot\cr
				\longleftharpoondown\cr
			}%
	}}%
	#3 \noloc #4%
}
\newcommand{\named@adjunction}[4]{%
	#2%
	\mathrel{\vcenter{%
			\offinterlineskip\m@th
			\ialign{%
				\hfil$##$\hfil\cr
				\scriptstyle#1\cr
				\noalign{\kern.1ex}
				\longrightharpoonup\cr
				\noalign{\kern-.3ex}
				\smallbot\cr
				\longleftharpoondown\cr
				\scriptstyle#4\cr
			}%
	}}%
	#3%
}
\newcommand{\longrightharpoonup}{\relbar\joinrel\rightharpoonup}
\newcommand{\longleftharpoondown}{\leftharpoondown\joinrel\relbar}
\newcommand\noloc{%
	\nobreak
	\mspace{6mu plus 1mu}
	{:}
	\nonscript\mkern-\thinmuskip
	\mathpunct{}
	\mspace{2mu}
}
\newcommand{\smallbot}{%
	\begingroup\setlength\unitlength{.15em}%
	\begin{picture}(1,1)
		\roundcap
		\polyline(0,0)(1,0)
		\polyline(0.5,0)(0.5,1)
	\end{picture}%
	\endgroup
}
\makeatother

\makeatletter
\newcommand*{\doublerightarrow}[2]{\mathrel{
		\settowidth{\@tempdima}{$\scriptstyle#1$}
		\settowidth{\@tempdimb}{$\scriptstyle#2$}
		\ifdim\@tempdimb>\@tempdima \@tempdima=\@tempdimb\fi
		\mathop{\vcenter{
				\offinterlineskip\ialign{
					\hbox to\dimexpr\@tempdima+1.5em{##}\cr
					\rightarrowfill\cr\noalign{\kern.5ex}
					\rightarrowfill\cr
		}}}%
		\limits^{\!#1}_{\!#2}}}
\newcommand*{\triplerightarrow}[1]{\mathrel{
		\settowidth{\@tempdima}{$\scriptstyle#1$}
		\mathop{\vcenter{
				\offinterlineskip\ialign{\hbox to\dimexpr\@tempdima+1em{##}\cr
					\rightarrowfill\cr\noalign{\kern.5ex}
					\rightarrowfill\cr\noalign{\kern.5ex}
					\rightarrowfill\cr}}}\limits^{\!#1}}}
\makeatother

\newcommand{\NN}{\mathbb{N}}

\newcommand{\ZZ}{\mathbb{Z}}
\newcommand{\LL}{\mathbb{L}}
\newcommand{\RR}{\mathbb{R}}

\def\sE{E}

\def\QQ{\mathbb{Q}}
\def\C{\mathcal{C}}
\def\D{\mathcal{D}}
\def\G{\mathcal{G}}

\def\F{\mathcal{F}}
\def\SS{\mathcal{S}}

\def\I{\mathcal{I}}
\def\cS{\mathcal{S}}
\def\T{\mathcal{T}}
\def\P{\mathcal{P}}
\def\Q{\mathcal{Q}}

\def\R{\mathcal{R}}
\renewcommand{\L}{\mathcal{L}}

\def\O{\mathcal{O}}

\def\wtl{\widetilde}

\def\K{\mathcal{K}}

\def\alp{{\alpha}}

\def\eps{{\varepsilon}}

\def\Om{{\Omega}}

\def\vphi{\varphi}

\def\lrarsimeq{\overset{\simeq}{\lrar}}

\def\l{\langle}
\def\r{\rangle}

\newcommand{\HSwarrow}{\kern0.05ex\vcenter{\hbox{\Huge\enemath{\Swarrow}}}\kern0.05ex}
\newcommand{\hSwarrow}{\kern0.05ex\vcenter{\hbox{\huge\ensuremath{\Swarrow}}}\kern0.05ex}
\newcommand{\LLSwarrow}{\kern0.05ex\vcenter{\hbox{\LARGE\ensuremath{\Swarrow}}}\kern0.05ex}
\newcommand{\LSwarrow}{\kern0.05ex\vcenter{\hbox{\Large\ensuremath{\Swarrow}}}\kern0.05ex}

\newcommand{\HSearrow}{\kern0.05ex\vcenter{\hbox{\Huge\ensuremath{\Searrow}}}\kern0.05ex}
\newcommand{\hSearrow}{\kern0.05ex\vcenter{\hbox{\huge\ensuremath{\Searrow}}}\kern0.05ex}
\newcommand{\LLSearrow}{\kern0.05ex\vcenter{\hbox{\LARGE\ensuremath{\Searrow}}}\kern0.05ex}
\newcommand{\LSearrow}{\kern0.05ex\vcenter{\hbox{\Large\ensuremath{\Searrow}}}\kern0.05ex}

\newcommand{\HDownarrow}{\kern0.05ex\vcenter{\hbox{\Huge\ensuremath{\Downarrow}}}\kern0.05ex}
\newcommand{\hDownarrow}{\kern0.05ex\vcenter{\hbox{\huge\ensuremath{\Downarrow}}}\kern0.05ex}
\newcommand{\LLDownarrow}{\kern0.05ex\vcenter{\hbox{\LARGE\ensuremath{\Downarrow}}}\kern0.05ex}
\newcommand{\LDownarrow}{\kern0.05ex\vcenter{\hbox{\Large\ensuremath{\Downarrow}}}\kern0.05ex}

\newcommand{\HUparrow}{\kern0.05ex\vcenter{\hbox{\Huge\ensuremath{\Uparrow}}}\kern0.05ex}
\newcommand{\hUparrow}{\kern0.05ex\vcenter{\hbox{\huge\ensuremath{\Uparrow}}}\kern0.05ex}
\newcommand{\LLUparrow}{\kern0.05ex\vcenter{\hbox{\LARGE\ensuremath{\Uparrow}}}\kern0.05ex}
\newcommand{\LUparrow}{\kern0.05ex\vcenter{\hbox{\Large\ensuremath{\Uparrow}}}\kern0.05ex}

\newcommand\restr[2]{{
		\left.\kern-\nulldelimiterspace 
		#1 
		\vphantom{\big|} 
		\right|_{#2} 
}}

\newtheorem{thm}{Theorem}[subsection]
\newtheorem{cor}[thm]{Corollary}
\newtheorem{lem}[thm]{Lemma}
\newtheorem{prop}[thm]{Proposition}
\newtheorem{pro}[thm]{Proposition}

\makeatletter
\@addtoreset{thm}{section}
\makeatother

\numberwithin{thm}{subsection}
\numberwithin{equation}{subsection}

\theoremstyle{definition}
\newtheorem{define}[thm]{Definition}

\newtheorem{example}[thm]{Example}

\newtheorem{dfn}[thm]{Definition}
\newtheorem{cons}[thm]{Construction}

\newtheorem{notn}[thm]{Notation}

\newtheorem{obs}[thm]{Observation}

\newtheorem{conv}[thm]{Convention}

\newtheorem{rem}[thm]{Remark}

\DeclareMathOperator{\holim}{holim}
\DeclareMathOperator{\hocolim}{hocolim}

\DeclareMathOperator{\Com}{Com}

\DeclareMathOperator{\Id}{Id}
\DeclareMathOperator{\sD}{D}
\DeclareMathOperator{\id}{id}
\DeclareFontFamily{OT1}{pzc}{}
\DeclareFontShape{OT1}{pzc}{m}{it}{<-> s * [1.10] pzcmi7t}{}
\DeclareMathAlphabet{\mathpzc}{OT1}{pzc}{m}{it}
\DeclareMathOperator{\cof}{cof}

\DeclareMathOperator{\Alg}{Alg}
\DeclareMathOperator{\Ass}{Ass}
\DeclareMathOperator{\Free}{Free}
\DeclareMathOperator{\LMod}{LMod}

\DeclareMathOperator{\RMod}{RMod}
\DeclareMathOperator{\ModCat}{ModCat}

\DeclareMathOperator{\Rect}{Rect}

\DeclareMathOperator{\op}{op}
\DeclareMathOperator{\rE}{E}
\DeclareMathOperator{\Map}{Map}

\DeclareMathOperator{\red}{red}
\DeclareMathOperator{\inj}{inj}
\DeclareMathOperator{\proj}{proj}

\DeclareMathOperator{\Sets}{Sets}
\DeclareMathOperator{\Top}{Top}

\DeclareMathOperator{\Cat}{Cat}

\DeclareMathOperator{\CAlg}{CAlg}
\DeclareMathOperator{\Mod}{Mod}

\DeclareMathOperator{\Fun}{Fun}

\DeclareMathOperator{\Ob}{Ob}
\DeclareMathOperator{\Sp}{Sp}
\DeclareMathOperator{\Ho}{Ho}

\DeclareMathOperator{\Op}{Op}

\DeclareMathOperator{\der}{h}
\DeclareMathOperator{\BMod}{BMod}
\DeclareMathOperator{\aug}{aug}

\DeclareMathOperator{\Aut}{Aut}

\DeclareMathOperator{\h}{h}

\DeclareMathOperator{\sMod}{sMod}

\DeclareMathOperator{\Der}{Der}

\DeclareMathOperator{\hocofib}{hocofib}

\DeclareMathOperator{\Sec}{Sec}
\DeclareMathOperator{\coc}{coc}

\DeclareMathOperator{\Def}{Def}
\DeclareMathOperator{\defi}{def}

\DeclareMathOperator{\art}{art}

\DeclareMathOperator{\IbMod}{IbMod}
\DeclareMathOperator{\ILMod}{I\ell Mod}
\DeclareMathOperator{\Hom}{Hom}
\DeclareMathOperator{\rL}{L}

\DeclareMathOperator{\sH}{H}

\DeclareMathOperator{\sN}{N}

\DeclareMathOperator{\st}{t}

\DeclareMathOperator{\sB}{b}

\DeclareMathOperator{\Coll}{Coll}

\DeclareMathOperator{\Set}{Set}

\DeclareMathOperator{\E}{E}

\DeclareMathOperator{\End}{End}
\DeclareMathOperator{\Fin}{Fin}

\DeclareMathOperator{\sS}{S}
\DeclareMathOperator{\HHH}{HH}
\DeclareMathOperator{\HHQ}{HQ}

\DeclareMathOperator{\sr}{r}
\DeclareMathOperator{\si}{i}
\DeclareMathOperator{\ir}{ir}

\DeclareMathOperator{\Tan}{Tan}

\def\x{\overset}

\def\Hom{\textrm{Hom}}
\def\End{\textrm{End}}

\newcommand{\tgpd}{\kern0.05ex\vcenter{\hbox{\footnotesize\ensuremath{2}}}\kern0.05ex\mathcal{G}pd} 

\def\rar{\rightarrow}

\def\lrar{\longrightarrow}
\def\llar{\longleftarrow}

\def\ovl{\overline}

\title[Deformation theory and cotangent complex of dg operads]%
{Deformation theory and cotangent complex of \\  dg operads}

\author{\textbf{\texttt{Yonatan Harpaz}}\,}
\address{Institut de Mathématiques de Jussieu-Paris Rive Gauche \newline Email: yonatan.harpaz@imj-prg.fr}

\author{\,\textbf{\texttt{Truong Hoang}}}
\address{Department of Mathematics, Hanoi FPT University \newline Email: truonghm@fe.edu.vn}

\usepackage{fancyhdr}
\usepackage{lipsum} 
\usepackage{xcolor}

\begin{document}

\begin{abstract} In the first part, we give an explicit description of the cotangent complex of differential graded (dg) operads, modeled as an operadic infinitesimal bimodule. This leads to a uniform formula for the Quillen cohomology of their associated algebras. We further show that the cotangent complex of the dg $\E_\infty$-operad is represented by the Pirashvili functor, while that of the dg $\E_n$-operad is conveniently described via its Hochschild complex. In the second part, we establish an explicit relation between deformation theory and (spectral) Quillen cohomology for various types of algebraic objects. Combining these results, we obtain a formulation of the space of first-order deformations of dg operads, which is particularly convenient in the case of dg $\E_n$-operads.  
\end{abstract} 

\maketitle

\tableofcontents

\bigskip

\fontsize{10.5}{13}\selectfont

\medskip

\section{Introduction}\label{s:introduction}

Any \textbf{operad} determines a category of (\textbf{operadic}) \textbf{algebras} over it. In this way, operads encode familiar algebraic structures such as \textbf{associative}, \textbf{commutative}, and \textbf{Lie algebras}, as well as more intricate structures arising in a topological setting, such as $\E_n$-\textbf{algebras}. A natural principle suggests that, for a given type of operadic algebras, any property shared by a sufficiently broad class of algebras can be interpreted in terms of intrinsic features of the operad itself. Below, we present some illustrative examples.

\begin{enumerate}[(1)]
	\item Let $\P$ be an operad in a (sufficiently nice) \textit{symmetric monoidal model category} $\cS$. It has been observed that whenever $\P$ is $\Sigma$-\textbf{cofibrant}, the \textit{category of $\P$-algebras} admits a \textit{semi-model structure} transferred from the model structure on $\cS$. Moreover, the resulting semi-model category is of the right type: any weak equivalence between $\Sigma$-cofibrant operads induces a \textit{Quillen equivalence} between the corresponding categories of algebras (see e.g., \cite{Fresse1, Spitzweck}). This demonstrates that the \textbf{little $n$-discs operad}  provides a right model for the homotopy theory of  $\E_n$-algebras for every $1\leq n < \infty$. On the other hand, it highlights the role of $\E_\infty$-\textbf{algebras} and $\rL_\infty$-\textbf{algebras} as (homotopically) refined versions of ordinary commutative and Lie algebras, respectively.   
	
	\item As in the classical algebraic setting, the (co)homology theories of operadic algebras are of central significance. Initially, the (\textit{operadic}) \textit{cohomology of $\P$-algebras} was established for \textbf{quadratic} (\textbf{dg}) \textbf{operads}, where one can see explicitly how the associated cochain complexes are controlled by the operations of $\P$. In addition, if $\P$ is \textbf{Koszul}, the resulting cohomology agrees with the \textbf{André-Quillen cohomology}. (See \cite[$\S$12.4]{Loday} for more details.)
	
	\item In \cite{Hoang1}, a formalism of \textbf{Quillen}  (and \textbf{Hochschild}) \textbf{cohomology} for operadic algebras has been developed in a highly general setting, which employs Lurie's \textit{cotangent complex formalism} \cite{Lurieha} together with the foundational results on \textit{operadic tangent categories} due to Harpaz-Nuiten-Prasma \cite{Yonatan}. The main results of \cite{Hoang1} show how the Quillen and Hochschild cohomology of $\P$-algebras can be expressed in terms of the corresponding cohomology of $\P$ (once and for all such algebras). Furthermore, an explicit relation between these two cohomology theories for $\E_n$-\textbf{spaces} is obtained, based solely on the corresponding known relation for the $\E_n$-operad itself, as established in \cite{Hoang}. 
\end{enumerate}

The above discussion illustrates the role of operads in the study of their associated algebras, particularly in the context of cohomology theories. Nevertheless, operads are of independent interest, even apart from operadic algebras: $\E_n$-operads enrich the homotopy type of \textbf{configuration spaces of discs}, and enable the study of the homotopy of \textbf{long embedding spaces} via their associated \textit{infinitesimal bimodule structure} (cf. \cite{Turchin}), while also having a wide range of other applications.

With these motivations in mind, we focus in this paper on the Quillen cohomology of (\textbf{dg}) \textbf{operads}, as well as on the relation between Quillen cohomology and \textbf{deformation theory}. Together with \cite{Hoang} and \cite{Hoang1}, this paper sheds further light on cohomology theories for operads and their associated algebras, particularly in the simplicial and differential graded contexts. Furthermore, these works, along with the related series \cite{YonatanCotangent, Yonatan, YonatanBundle}, lay the foundations for the study of tangent categories and cotangent complexes in a general setting, focusing on \textit{enriched categories}, \textit{enriched operads}, and operadic algebras.

\subsection{Some terminology and notations}

For convenience in summarizing the main results, we briefly present some relevant terminology and notations.

\begin{enumerate}
	
	\item Let $\textbf{M}$ be a suitable model (or semi-model) category, and let $A \in \textbf{M}$ be an object. We denote by $\mathcal{T}_A\textbf{M}:=\Sp(\textbf{M}_{A//A})$ the \textbf{stabilization} of the \textit{category of objects over and under $A$}, and refer to it as the \textbf{tangent category to $\textbf{M}$ at $A$}. This comes equipped with a Quillen adjunction 
	$$\Sigma^{\infty}_+ : \adjunction*{}{\textbf{M}_{/A}}{\mathcal{T}_A\textbf{M}}{} : \Omega^{\infty}_+$$
	of \textbf{suspension-desuspension spectrum functors}. The \textbf{cotangent complex} of $A$ is defined to be $$\rL_A:=\mathbb{L}\Sigma^{\infty}_+(A) \in  \mathcal{T}_A\textbf{M}.$$
	The \textbf{Quillen cohomology of} $A$ with coefficients in some $M\in\mathcal{T}_A\textbf{M}$ is the space
	$$ \HHQ^\star(A ; M) :=  \Map^{\der}_{\mathcal{T}_A\textbf{M}} (\rL_A , M),$$
	and the \textbf{$n$-th Quillen cohomology group} is defined to be
	$$ \HHQ^n(A  ; M) := \pi_0 \Map^{\der}_{\mathcal{T}_A\textbf{M}} (\rL_A , M[n]).$$
	(See $\S$\ref{s:notcotan} for more details about these constructions.)
	
	\item Let $\textbf{k}$ be a fixed commutative ring, and let $\C(\textbf{k})$ denote the \textbf{category of dg $\textbf{k}$-modules}. For a \textit{set of colors} $C$, we denote by $\Op_C(\C(\textbf{k}))$ the \textbf{category of $C$-colored operads in $\C(\textbf{k})$}, and by $\Op(\C(\textbf{k}))$ the \textbf{category of operads with non-fixed sets of colors} (cf. $\S$\ref{s:opmod}).
	
	\item  An operad $\P \in \Op_C(\C(\textbf{k}))$ will typically be referred to as a (\textbf{$C$-colored}) \textbf{dg operad}. Moreover, we denote by $\IbMod(\P)$ the \textbf{category of infinitesimal $\P$-bimodules}. (See $\S$\ref{s:opinfi} for more details.) 
	
	\item We are mainly interested in the Quillen cohomology of $\P$ regarded as an object of $\Op(\C(\mathbf{k}))$. Accordingly, we write $\rL_\P \in \T_{\P}\Op(\C(\mathbf{k}))$ for the (\textit{proper}) \textit{cotangent complex of} $\P$, and denote its corresponding Quillen cohomology by $\HHQ^\star(\P; -)$.
	
	\item The \textbf{Hochschild cohomology of $\P$} with coefficients in an object $M \in \IbMod(\P)$ is the space
	   $$ \HHH^\star(\P;M) := \Map^{\h}_{\IbMod(\P)}(\P,M),$$ 
	   and the \textbf{$n$-th Hochschild cohomology group} is given by
	   $$ \HHH^n(\P;M) := \pi_0\Map^{\h}_{\IbMod(\P)}(\P,M[n]).$$
	   
	   (See \cite[$\S$5.1]{Hoang1} for more details.)
	
	\item By convention, an \textit{augmented commutative $\textbf{k}$-algebra} $\eps:R \lrar \textbf{k}$ is \textbf{artinian} if the underlying complex of $R$ is finite-dimensional and  $\eps$ makes $\pi_0(R)$ (i.e. the zeroth homology of $R$) a \textit{local} $\textbf{k}$-\textit{algebra}. We denote by $\CAlg^{\art}$ the \textbf{category of artinian $\textbf{k}$-algebras}. (See $\S$\ref{sub:fmc} for more details.)

\end{enumerate}

\subsection{Cotangent complex of dg operads}

Let $\P \in \Op_C(\C(\textbf{k}))$ be a $C$-colored operad in $\C(\textbf{k})$. We assume that $\P$ is $\Sigma$-\textit{cofibrant} and \textit{connective} (i.e., every complex of  operations is concentrated in non-negative degrees).    

According to \cite{Hoang}, there is a natural Quillen equivalence 
$$	\adjunction*{\simeq}{\IbMod(\P)}{\T_{\P}\Op(\C(\textbf{k}))}{}  $$
between the category of infinitesimal $\P$-bimodules and the \textit{tangent category to $\Op(\C(\textbf{k}))$ at} $\P$. Our first main result is stated as follows.
\begin{thm}\label{t:main1}\textup{(\ref{t:cotan})} Under the equivalence $\T_\P\Op(\C(\textbf{k})) \simeq \IbMod(\P)$, the cotangent complex $\rL_\P$ is identified with $\ovl{\rL}_\P[-1] \in \IbMod(\P)$, where $\ovl{\rL}_\P$ is defined, for each tuple $(c_1,\cdots,c_m;c)$ of colors of $\P$, by
	$$ \ovl{\rL}_\P(c_1,\cdots,c_m;c) = \P(c_1,\cdots,c_m;c)^{\oplus \, m}$$
the $m$-fold direct sum of the complex of operations $\P(c_1,\cdots,c_m;c)$. Consequently, the Quillen cohomology of $\P$ with coefficients in some $M\in\IbMod(\P)$ is computed by the formula
$$ \HHQ^\star(\P ; M)  \simeq \Map^{\h}_{\IbMod(\P)}(\ovl{\rL}_\P[-1],M),$$
and the $n$-th Quillen cohomology group is formulated as
$$  \HHQ^n(\P ; M)  \cong \pi_0\Map^{\h}_{\IbMod(\P)}(\ovl{\rL}_\P,M[n+1]).$$ 
\end{thm}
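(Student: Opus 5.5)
We will compute $\rL_\P$ in two stages, first for the fixed-color category $\Op_C(\C(\textbf{k}))$ and then for the category $\Op(\C(\textbf{k}))$ with varying colors. In \cite{Hoang} the tangent categories are related by a cofiber sequence: the proper cotangent complex $\rL_\P$ is the cofiber of a map from the ``colors'' contribution to the reduced cotangent complex $\rL^{C}_\P\in\T_\P\Op_C(\C(\textbf{k}))\simeq\IbMod(\P)$. Under $\T_\P\Op(\C(\textbf{k}))\simeq \IbMod(\P)$, this cofiber sequence should take the form
$$ \textstyle\coprod_{c\in C}\P(-;c)\text{-unit summands} \lrar \rL^{C}_\P \lrar \rL_\P .$$
The unit map $\mathcal{I}_C\to\P$ is what couples the colors to the operations. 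We therefore proceed as follows.

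First, we identify $\rL^{C}_\P$. Take a cofibrant resolution of $\P$ by a quasi-free operad $\Free(X)$ with a filtration (a cell resolution), which is possible because $\P$ is connective and $\Sigma$-cofibrant. For a free operad $\Free(X)$, the cotangent complex in $\IbMod(\Free(X))$ is the free infinitesimal bimodule generated by $X$, namely $\Free(X)\circ_{(1)}X$-type trees with one marked vertex. Passing to the cell filtration, this yields a two-term description: $\rL^{C}_\P$ is equivalent to the homotopy cofiber of the infinitesimal-bimodule map
$$\P\circ_{(1)}\P \lrar \P$$
(the ``infinitesimal composition'' map). This is the standard fact that Kähler differentials of an operad are its augmentation ideal in infinitesimal bimodules. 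Concretely, we expect $\rL^C_\P\simeq \P\circ_{(1)}\P$ modulo relations, which equals the kernel of the multiplication, shifted appropriately.

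Second, we take the cofiber with respect to the unit, and this is where the formula $\P(c_1,\dots,c_m;c)^{\oplus m}$ should appear. The free infinitesimal $\P$-bimodule generated at the unary arity by the identities $\id_c$ has value $\bigoplus_{i=1}^m\P(c_1,\dots,c_m;c)$ at $(c_1,\dots,c_m;c)$. Indeed, it is $\P\circ_{(1)}\mathcal{I}_C\circ_{(1)}$-style insertion at one of the $m$ inputs, and after simplification the free infinitesimal bimodule on one unary generator per color is $\P\circ_{(1)}\P$ restricted appropriately, giving one copy of $\P$ for each input slot. Hence $\ovl{\rL}_\P$ is exactly the free infinitesimal bimodule on the colors, and the claim $\rL_\P\simeq\ovl{\rL}_\P[-1]$ amounts to showing that the fixed-color contribution cancels. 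Specifically, the map from $\ovl{\rL}_\P$ (the unit/colors term) to the bimodule $\P$ must be identified with the map whose fiber computes $\rL_\P$. This means we must exhibit a fiber sequence $\rL_\P\to \ovl{\rL}_\P\to \P$ in $\IbMod(\P)$ in which the last map is an equivalence after accounting for the fixed-color part, or alternatively show directly that $\rL^{C}_\P\simeq \mathrm{fib}(\ovl{\rL}_\P\to\P)$ and then that adding colors shifts this by one. We will follow the latter route: we use the bar-type resolution $\P\circ_{(1)}\P\rightrightarrows\P$ to show that $\rL^C_\P\simeq\mathrm{fib}(\ovl{\rL}_\P\to\P)$, where the map is $\oplus_i \mu_i\mapsto\sum_i\mu_i$. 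Then, from the \cite{Hoang} cofiber sequence for varying colors, where the colors term is $\P$ itself (the free infinitesimal bimodule on the identities is $\P$ when only outputs matter), we obtain $\rL_\P\simeq\mathrm{cofib}(\P[-1]\to\rL^C_\P)\simeq\ovl{\rL}_\P[-1]$.

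The cohomological formulas then follow formally from $\HHQ^\star(\P;M)=\Map^{\h}(\rL_\P,M)$ together with the shift $[-1]$. The main obstacle is the second step: pinning down the exact form of the comparison map in the varying-color cofiber sequence of \cite{Hoang}, and checking that it matches the map $\P\to\mathrm{fib}(\ovl{\rL}_\P\to\P)[1]$, so that the octahedral axiom collapses the fiber to $\ovl{\rL}_\P[-1]$. Our first attempt will be to verify this on free operads, where everything is explicit, and then extend by cell induction using that all the constructions preserve homotopy colimits.
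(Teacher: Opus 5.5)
There is a genuine gap. Splitting into the reduced cotangent complex plus a correction for the colors is a reasonable strategy in principle, but the specific identifications you rely on are wrong, so the argument cannot be completed as written.

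(1) The color sequence is misstated. It comes from the unit $\eta:\I_C\lrar\P$:
$$\eta_!(\rL_{\I_C})\lrar\rL_\P\lrar\rL_{\P/\I_C}\simeq\rL^{\red}_\P,$$
where $\eta_!(\rL_{\I_C})\simeq(\P\circ_{(1)}\P)[-1]$ is the shifted free infinitesimal bimodule on $\I_C$ (Lemma~\ref{l:fibred}).
\begin{itemize}
\item The color term maps into $\rL_\P$, and $\rL^{\red}_\P$ is the cofiber; you have swapped the roles of the two.
\item The color term is $\P\circ_{(1)}\P$, which is neither $\P$ nor $\ovl{\rL}_\P$. The latter, $\ovl{\rL}_\P=\P\circ_{(1)}\I_C$, is free only as an infinitesimal \emph{left} module. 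Your two identifications of ``the free bimodule on the colors'' contradict each other, and both are false.
\end{itemize}

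(2) The claim $\rL^{\red}_\P\simeq\mathrm{fib}(\ovl{\rL}_\P\to\P)$ fails.
\begin{itemize}
\item The summation map $\mu^{\{c_i\}}\mapsto\mu$ is not right $\P$-linear for the structure of Remark~\ref{r:stlp}. For $i=j$ it sends $\mu^{\{c_j\}}\circ^{\ir}\lambda$ to $n\,\mu\circ\lambda$ instead of $\mu\circ\lambda$.
\item Take $\P=\Free(X)$ free on one unary generator. Then $\ovl{\rL}_\P=\P^{\si}$ and the summation map is the identity, so your formula gives $\rL^{\red}_\P\simeq 0$. In fact $\rL^{\red}_\P\simeq\Om_\P\cong\Free^{\si}_\P(X)\neq 0$.
\item The ``augmentation ideal'' heuristic also fails beyond arity one. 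For $\P$ free on a binary generator with $X(2)=\textbf{k}[\Sigma_2]$, the kernel of $\P\circ_{(1)}\P(2)\to\P(2)$ has rank $4$, while $\Om_\P(2)\cong X(2)$ has rank $2$.
\item The correct analogue of $\Om_A\to A\otimes A\to A$ is $\Om_\P\to\P\circ_{(1)}\P\to\ovl{\rL}_\P$, with $\ovl{\rL}_\P$, not $\P$, playing the role of $A$.
\end{itemize}

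(3) Even granting your sequences, $\mathrm{cofib}\bigl(\P[-1]\to\mathrm{fib}(\ovl{\rL}_\P\to\P)\bigr)\simeq\ovl{\rL}_\P$, not $\ovl{\rL}_\P[-1]$. The shift is off by one.

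For comparison, the paper never passes through $\rL^{\red}_\P$; the sequence $\Om_\P\to\P\circ_{(1)}\P\to\ovl{\rL}_\P$ is derived there as a consequence of the theorem. The argument runs as follows.
\begin{itemize}
\item It imports Theorem~\ref{t:keylem}(2) from \cite{Hoang}. For cofibrant good $\P$ over a stably abundant base this gives $\ovl{\rL}_\P(c_1,\cdots,c_m;c)\simeq\P(c_1,\cdots,c_m;c)\otimes\hocolim_n\Omega^n[(\sS^n)^{\otimes m}\times^{\h}_{\textbf{k}}0]$.
\item It replaces an adequate $\P$ by a cofibrant connective resolution, which is automatically good (Remark~\ref{r:cotan}).
\item Lemma~\ref{l:cotan} computes the stabilization. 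One has $(\textbf{k}\oplus\textbf{k}[n])^{\otimes m}\cong\bigoplus_i\binom{m}{i}\textbf{k}[in]$. Discarding $i=0$ and looping gives $\bigoplus_{i\geq1}\binom{m}{i}\textbf{k}[(i-1)n]$. The colimit over $n$ kills every $i\geq 2$, leaving $\textbf{k}^{\oplus m}$.
\end{itemize}

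A repaired version of your route would have to do three things.
\begin{itemize}
\item (a) Know $\rL_{\I_C}\simeq\I_C[-1]$ independently. The paper deduces this from Theorem~\ref{t:cotan} itself, so citing Lemma~\ref{l:fibred} would be circular.
\item (b) Identify the connecting map $\Om_\P\to\P\circ_{(1)}\P$ with the map $\Psi_\P$ classified by $d(\mu)=\sum_i(\mu;\id_{c_i})-(\id_c;\mu)$. This is exactly the step you flag as the main obstacle and leave open.
\item (c) Prove directly that $\mathrm{cofib}(\Psi_\P)\simeq\ovl{\rL}_\P$ for cofibrant $\P$.
\end{itemize}
Only then would $\rL_\P\simeq\mathrm{fib}(\Psi_\P)\simeq\ovl{\rL}_\P[-1]$ follow.
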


For more information, $\ovl{\rL}_\P$ can be represented as $$\ovl{\rL}_\P \cong \P\circ_{(1)}\I_C$$ the \textbf{infinitesimal composite product} of $\P$ with the \textit{initial $C$-colored operad} $\I_C$ (cf. $\S$\ref{s:opinfi}). This is equipped with an infinitesimal $\P$-bimodule structure as described in Remark~\ref{r:stlp}.

In addition, we provide a reformulation of the Quillen cohomology of $\P$-algebras as follows.
\begin{cor}\textup{(\ref{co:QPalg})} Let $A$ be a cofibrant $\P$-algebra, and $N$ an $A$-module over $\P$. The Quillen cohomology of $A$ with coefficients in $N$ can be computed by the formula
	\begin{equation}\label{eq:QPalg}
		\HHQ^\star(A ; N) \simeq \Omega \HHQ^\star(\P;\End_{A,N}) \simeq \Map^{\h}_{\IbMod(\P)}(\ovl{\rL}_\P,\End_{A,N})
	\end{equation}
where $\End_{A,N}$ is the infinitesimal $\P$-bimodule associated with the pair $(A,N)$ (see $\S$\ref{s:endo}). Consequently, the $n$-th Quillen cohomology group is given by
	$$  \HHQ^n(A ; N) \cong \pi_0\Map^{\h}_{\IbMod(\P)}(\ovl{\rL}_\P,\End_{A,N}[n]).$$
\end{cor}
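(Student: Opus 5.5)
We combine the main result of \cite{Hoang1} on Quillen cohomology of algebras with Theorem~\ref{t:main1}. The first step is to recall from \cite{Hoang1} the formula expressing Quillen cohomology of a $\P$-algebra through the operad. For a cofibrant $\P$-algebra $A$ and an $A$-module $N$ over $\P$, we have
$$\HHQ^\star(A;N) \simeq \Omega\,\HHQ^\star(\P;\End_{A,N}),$$
where $\End_{A,N}$ is regarded as an infinitesimal $\P$-bimodule, hence as an object of $\T_\P\Op(\C(\textbf{k}))$ via the Quillen equivalence $\IbMod(\P)\simeq \T_\P\Op(\C(\textbf{k}))$.

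The argument behind that formula is as follows. The pair $(A,N)$ determines a map of operads $\P \to \End_{A}\ltimes$ (square-zero extension by $\End_{A,N}$), and the tangent category of $\Alg_\P$ at $A$ is identified with $A$-modules over $\P$. The cotangent complex $\rL_A$ is then obtained from $\rL_\P$ by base change along the functor $\IbMod(\P)\to \Mod_\P(A)$, $M\mapsto M\circ_\P A$. Up to the shift, which comes from the cofiber sequence relating the relative cotangent complex of $\P\to$ (the operad encoding $\P$-algebra $A$) and $\rL_A$, this yields the displayed equivalence. If the paper's earlier sections already state this as a theorem, I simply invoke it; otherwise I reprove it along these lines. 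I expect this to be the main obstacle: identifying precisely which adjunction and which shift appear, and checking that cofibrancy of $A$ and $\Sigma$-cofibrancy of $\P$ make the derived functors agree with the underived ones.

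With the first equivalence in hand, Theorem~\ref{t:main1} gives
$$\HHQ^\star(\P;\End_{A,N})\simeq\Map^{\h}_{\IbMod(\P)}(\ovl{\rL}_\P[-1],\End_{A,N}).$$
Taking loops uses stability of $\IbMod(\P)$:
$$\Omega\Map^{\h}(X[-1],M)\simeq \Map^{\h}(X[-1],M[-1])\simeq\Map^{\h}(X,M),$$
since the shift is an autoequivalence. This yields \eqref{eq:QPalg}.

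For the group statement, apply this with $N[n]$ in place of $N$. One needs $\End_{A,N[n]}\simeq\End_{A,N}[n]$; this holds because $\End_{A,N}(c_1,\dots,c_m;c)$ is a sum of internal Homs from tensor products of copies of $A$, with exactly one copy of $N$, into $N$ (or a Hom into $N$ from tensors of $A$), which is linear in $N$. Taking $\pi_0$ then gives the final formula.
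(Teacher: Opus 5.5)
Your proof is correct and follows the paper's route: the paper obtains this corollary simply by combining Theorem~\ref{t:cotan} with \cite[Remark 5.28]{Hoang1}, which supplies $\HHQ^\star(A;N)\simeq\Omega\HHQ^\star(\P;\End_{A,N})$, so nothing needs to be reproved; your loop/shift bookkeeping and the identification $\End_{A,N[n]}\cong\End_{A,N}[n]$ spell out what the paper leaves implicit. One small correction: $\End_{A,N}(c_1,\cdots,c_m;c)=\Map(A(c_1)\otimes\cdots\otimes A(c_m),N(c))$ has no copy of $N$ in the source (your parenthetical is the right description), and this is what makes it visibly linear in $N$.
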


\begin{rem} This is traditionally computed by
	 $$  \HHQ^\star(A ; N) \simeq \Map^{\h}_{\Mod_{\P}^A}(\Om_A, N) $$
in which $\Mod_{\P}^A$ denotes the \textit{category of 	$A$-modules over $\P$}, and $\Om_A$ denotes the \textbf{module of Kähler differentials} of $A$ (see $\S$\ref{s:cotandg}). Clearly, computations in the category $\Mod_{\P}^A$ are  less convenient than in $\IbMod(\P)$ (which is only parametrized over $\P$), and $\Omega_A$ does not admit as simple a description as $\ovl{\rL}_\P$. Moreover, formula \eqref{eq:QPalg} provides a uniform way to compute the Quillen cohomology for all $\P$-algebras.  
\end{rem}

The next theorem provides an explicit model for the cotangent complex of the \textbf{dg $\E_\infty$-operad}. 

\begin{thm}\label{t:main2}\textup{(\ref{t:LEinfty})} Up to a zigzag of Quillen equivalences, the tangent category $\T_{\E_\infty}\Op(\C(\textbf{k}))$ is equivalent to the projective model category $\Fun(\Fin_*^{\op} , \C(\textbf{k}))$, where $\Fin_*$ denotes the skeleton of the category of finite pointed sets (cf. $\S$\ref{s:opinfi}). Moreover, the cotangent complex $\rL_{\E_\infty} \in \T_{\E_\infty}\Op(\C(\textbf{k}))$ is then identified with $t[-1] : \Fin_*^{\op} \lrar \C(\textbf{k})$, i.e., the desuspension of the  Pirashvili functor $t$.
\end{thm}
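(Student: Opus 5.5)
We take as the dg model of $\E_\infty$ a $\Sigma$-cofibrant connective resolution $\E_\infty \xrightarrow{\simeq} \Com$ of the commutative operad, say the Barratt--Eccles operad. By the Quillen equivalence $\IbMod(\E_\infty) \simeq \T_{\E_\infty}\Op(\C(\textbf{k}))$ of \cite{Hoang} and Theorem~\ref{t:main1}, the cotangent complex $\rL_{\E_\infty}$ corresponds to $\ovl{\rL}_{\E_\infty}[-1] = (\E_\infty\circ_{(1)}\I)[-1]$. It therefore suffices to do two things: produce a zigzag of Quillen equivalences $\IbMod(\E_\infty)\simeq \Fun(\Fin_*^{\op},\C(\textbf{k}))$, and check that $\ovl{\rL}_{\E_\infty}$ is sent to the Pirashvili functor $t$, where $t(\langle m\rangle)$ is the based functions $\langle m\rangle\to \textbf{k}$ (equivalently $\textbf{k}^{\oplus m}$).

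\emph{First step.} The map $\E_\infty\to\Com$ is a weak equivalence of $\Sigma$-cofibrant operads, or at least the comparison can be arranged through cofibrant replacements. Hence it induces a Quillen equivalence $\IbMod(\E_\infty)\simeq\IbMod(\Com)$ by base change along infinitesimal bimodules, using the relative properness or left-properness results available earlier in the paper (cf.\ $\S$\ref{s:opinfi}).

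\emph{Second step.} Identify $\IbMod(\Com)$ with $\Fun(\Fin_*^{\op},\C(\textbf{k}))$. An infinitesimal $\Com$-bimodule $M$ consists of a symmetric sequence $M(m)$ together with two kinds of structure maps. Right infinitesimal actions $M(m)\to M(m+k-1)$ insert a commutative $k$-ary operation, which amounts to precomposition with surjections. Left actions $\Com(k)\otimes M(m_1)\to M(m_1+k-1)$ add free inputs, which amounts to maps corresponding to basepoint-collapsing. Concretely, one indexes $M(m)$ by the pointed set $\langle m\rangle$, with the basepoint being the output. Then a pointed map $\langle n\rangle\to\langle m\rangle$ induces $M(m)\to M(n)$: fibers over non-basepoints are merged by right actions, and the fiber over the basepoint is absorbed by the left action. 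One checks that the relations of infinitesimal bimodules correspond exactly to functoriality on $\Fin_*^{\op}$. This is the folklore identification underlying Turchin's and Arone--Turchin's work. The result is an isomorphism of categories compatible with the projective model structures, since both are created levelwise.

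\emph{Third step.} Compute the image of $\ovl{\rL}_{\E_\infty}$. First, $\ovl{\rL}_{\E_\infty}(m)=\E_\infty(m)^{\oplus m}$ is weakly equivalent, as an infinitesimal bimodule, to $\ovl{\rL}_{\Com}(m)=\textbf{k}^{\oplus m}$, via the map induced by $\E_\infty\to\Com$ on $\P\circ_{(1)}\I$. Under the functor description from the second step, the $m$ summands are labelled by the marked input, i.e.\ by the non-basepoints of $\langle m\rangle$. Using the bimodule structure of Remark~\ref{r:stlp}, precomposition by a pointed map $f$ sends the summand of the marked input $i$ to the sum of summands over $f^{-1}(i)$, and to zero if $i$ maps to the basepoint. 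This is exactly the pullback of based functions, so the image is $t$, and desuspending gives $t[-1]$.

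The main obstacle I expect is the first step. Base change of infinitesimal bimodules along $\E_\infty\to\Com$ must be a Quillen equivalence even though $\Com$ is not $\Sigma$-cofibrant. I would try to deduce it from the fact that the derived infinitesimal composite products preserve weak equivalences when the source is $\Sigma$-cofibrant. Failing that, I would go through $\ovl{\rL}$ directly as a levelwise comparison of right-induced structures, using that over a field (or with projective cofibrancy) the forgetful functors detect equivalences.
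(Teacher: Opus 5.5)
Your second and third steps are essentially the paper's argument. The paper quotes Proposition~\ref{p:ibfunc} and Example~\ref{ex:tfunctor} for $\IbMod(\Com)\cong\Fun(\Fin_*^{\op},\C(\textbf{k}))$, and then checks that $\ovl{\rL}_{\Com}$ acts by diagonal maps $\textbf{k}^{\{i\}}\to\textbf{k}^{\oplus f^{-1}(i)}$, which is the Pirashvili functor $t$.

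The gap is in your first step, which is the content of the theorem's first assertion and which you leave unproved. As written, its justification does not hold. $\Com$ is not $\Sigma$-cofibrant over a general commutative ring $\textbf{k}$, which is the setting of \S\ref{s:Qdg}. There are no properness results in \S\ref{s:opinfi} to appeal to. Cofibrantly replacing $\Com$ just gives another $\E_\infty$-operad, not $\Fin_*^{\op}$-diagrams. Your second fallback, a levelwise comparison of $\ovl{\rL}$, would identify one object but would not produce the Quillen equivalence.

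The missing idea is to use the enriched-functor description \emph{before} comparing the two operads, as the paper does. By Proposition~\ref{p:ibfunc}, naturally in the operad, base change along $\E_\infty\to\Com$ becomes restriction and left Kan extension along the dg functor $\textbf{Ib}^{\E_\infty}\lrar\textbf{Ib}^{\Com}\cong\Fin_*^{\op}$. This functor is the identity on objects. On mapping complexes it is the direct sum, over $f\in\Hom_{\Fin_*}(\langle m\rangle,\langle n\rangle)$, of the maps $\E_\infty(k_{f,0})\otimes\cdots\otimes\E_\infty(k_{f,n})\lrar\textbf{k}$ (Notation~\ref{no:csing}). These are quasi-isomorphisms between cofibrant complexes, since a $\Sigma$-cofibrant $\E_\infty(k)$ is in particular cofibrant over $\textbf{k}$. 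So the functor is a weak equivalence of locally cofibrant dg categories, and it induces a Quillen equivalence of projective functor categories.

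Only levelwise cofibrancy of the operads enters. The representables, which generate the projective cofibrations, take values in sums of tensor products of the $\P(k)$, with no $\Sigma$-coinvariants. Compare Remark~\ref{r:AA}: $\IbMod(\P)$ only needs $\P$ levelwise cofibrant. So the $\Sigma$-cofibrancy of $\Com$ never matters. Your first fallback is exactly this argument, provided the $\Sigma$-cofibrancy is placed on the generating collection rather than on the operad. With it, your first two steps merge into one, and the rest of your proof goes through.
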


\begin{rem} For more details, the  \textbf{Pirashvili functor} $t : \Fin_*^{\op} \lrar \C(\textbf{k})$ sends each pointed set $\left \langle m \right \rangle := \{0,1,\cdots,m\}$ to 
	$$t(\l m \r) := [\l m \r, \textbf{k}]_*$$ the $\textbf{k}$-module of based maps $\l m \r \lrar \textbf{k}$, where $\textbf{k}$ has base point $0_\textbf{k}$. In light of Theorem \ref{t:main2}, the Quillen cohomology of the dg $\E_\infty$-operad agrees with the \textbf{stable cohomotopy} introduced by Pirashvili \cite{Pirash}. Moreover, it leads to an elegant formulation of the \textbf{Quillen cohomology of dg $\E_\infty$-algebras} (cf. Corollary \ref{co:QEinftyalg}). 
\end{rem}

Next, we discuss the cotangent complex of the \textbf{dg version of the $\E_n$-operad}, denoted $\mathbb{E}_n$. Denote by $\mathcal{E}_*$ the $\Sigma_*$-\textit{object} that is given by $\textbf{k}$ concentrated in level $0$, and by $ \Free_{\mathbb{E}_n}^{\si}(\mathcal{E}_*)$ the \textit{free infinitesimal $\mathbb{E}_n$-bimodule} generated by $\mathcal{E}_*$.

\begin{thm}\label{t:main3}\textup{(\ref{t:QprinEn})} There is a cofiber sequence in $\IbMod(\mathbb{E}_n)$ of the form
	$$\Free_{\mathbb{E}_n}^{\si}(\mathcal{E}_*) \lrar \mathbb{E}_n^{\si} \lrar \ovl{\rL}_{\mathbb{E}_n}[n]$$
in which $\mathbb{E}_n^{\si}$ refers to $\mathbb{E}_n$ regarded as an infinitesimal bimodule over itself, and the first map is induced by the identification $\textbf{k} \overset{\cong}{\lrar} \mathbb{E}_n(0)$. Consequently, for $M\in\IbMod(\mathbb{E}_n)$, there is a fiber sequence 
$$ \Om^{n+1}\HHQ^\star(\mathbb{E}_n ; M) \lrar \HHH^\star(\mathbb{E}_n ; M) \lrar \Map^{\h}_{\C(\textbf{k})}(\textbf{k},M(0)).$$
\end{thm}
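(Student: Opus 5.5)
Theorem~\ref{t:main1} identifies $\rL_{\mathbb{E}_n}\simeq \ovl{\rL}_{\mathbb{E}_n}[-1]$ in $\IbMod(\mathbb{E}_n)$. The plan is to obtain the cofiber sequence by comparing this with the known relation between Quillen and Hochschild cohomology of $\E_n$-operads established in \cite{Hoang}. There, working in the simplicial (or general) setting, the cotangent complex of the $\E_n$-operad is related to $\E_n$ viewed as an infinitesimal bimodule over itself via an $n$-fold suspension. Concretely, one has a cofiber sequence
$$\Free^{\si}(\mathcal{E}_*) \lrar \E_n^{\si} \lrar \rL_{\E_n}[n+1]$$
in the tangent category, coming from the fact that the stabilized operadic tangent category at $\E_n$ identifies "$\E_n$-operads over and under $\E_n$" with iterated loop objects. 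This is the additivity phenomenon $\E_n \simeq \E_1\otimes\cdots\otimes\E_1$.

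First, I would transport the result of \cite{Hoang} to the dg setting. The key input is the equivalence $\IbMod(\P)\simeq \T_\P\Op(\C(\mathbf{k}))$, together with the fact that $\mathbb{E}_n = C_*(\E_n;\mathbf{k})$ is $\Sigma$-cofibrant and connective. The functor $C_*(-;\mathbf{k})$ from simplicial operads to dg operads is left Quillen and symmetric monoidal up to equivalence, so it induces a left Quillen functor on tangent categories. This functor sends cotangent complexes to cotangent complexes, since $\Sigma^\infty_+$ commutes with left Quillen functors preserving the base point. It also sends $\E_n^{\si}$ to $\mathbb{E}_n^{\si}$ and the free infinitesimal bimodule on the unary-level-$0$ generator to $\Free^{\si}_{\mathbb{E}_n}(\mathcal{E}_*)$. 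Substituting $\rL_{\mathbb{E}_n}\simeq\ovl{\rL}_{\mathbb{E}_n}[-1]$ gives $\rL_{\mathbb{E}_n}[n+1]\simeq \ovl{\rL}_{\mathbb{E}_n}[n]$, which is exactly the claimed sequence. I would then check that the first map is the canonical one, adjoint to $\mathbf{k}\cong\mathbb{E}_n(0)$, by tracking the generator.

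If a direct simplicial input is not available in that form, the alternative is to prove the sequence directly in dg. This route has three steps:
\begin{enumerate}
\item Write $\mathbb{E}_n^{\si}$ as the cofiber of the map from the free part into the reduced bimodule.
\item Use the Boardman--Vogt/Dunn additivity to express the reduced part as an $n$-fold suspension of the generator module $\P\circ_{(1)}\I_C$.
\item Verify that the resulting bimodule structures agree with that of Remark~\ref{r:stlp}.
\end{enumerate}
I expect this to be the main obstacle: the identification of infinitesimal bimodule structures, not just of underlying $\Sigma_*$-objects, after shifting by $n$.

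Finally, the fiber sequence follows formally by applying $\Map^{\h}_{\IbMod(\mathbb{E}_n)}(-,M)$ to the cofiber sequence. This gives
$$\Map^{\h}(\ovl{\rL}_{\mathbb{E}_n}[n],M)\lrar \Map^{\h}(\mathbb{E}_n^{\si},M)\lrar \Map^{\h}(\Free^{\si}_{\mathbb{E}_n}(\mathcal{E}_*),M).$$
The middle term is $\HHH^\star(\mathbb{E}_n;M)$ by definition. The last term equals $\Map^{\h}_{\C(\mathbf{k})}(\mathbf{k},M(0))$ by the free--forgetful adjunction. The first term is $\Omega^{n+1}\Map^{\h}(\ovl{\rL}_{\mathbb{E}_n}[-1],M)=\Omega^{n+1}\HHQ^\star(\mathbb{E}_n;M)$ by Theorem~\ref{t:main1}, using stability of $\IbMod(\mathbb{E}_n)$.
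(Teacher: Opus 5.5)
Your last step, deriving the fiber sequence, is correct and matches the paper (Corollary~\ref{co:QprinEn}): apply $\Map^{\h}_{\IbMod(\mathbb{E}_n)}(-,M)$, use stability and Theorem~\ref{t:cotan}, and use the free--forgetful adjunction for $\Free^{\si}_{\mathbb{E}_n}(\mathcal{E}_*)$. The gap is in the cofiber sequence itself. Deducing it as the $\textbf{k}$-linearization of the topological Quillen principle of \cite{Hoang} is a genuinely different strategy from the paper's. It is legitimate in principle, but all the content sits in the transport step, and what you say there is not right as stated.

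The levelwise functor $\sN\circ\textbf{k}[-]$ acts on operads only through the lax (shuffle) Eilenberg--Zilber structure. So on operads $\sN$ is a right adjoint, not a left Quillen functor; it does not even preserve coproducts of operads. To push cotangent complexes forward you need its left adjoint, plus a proof that it agrees with levelwise chains on cofibrant simplicial operads. Over an arbitrary commutative ring $\textbf{k}$ that is a genuine lemma, because free operads involve $\Sigma$-coinvariants.

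Moreover, $\Set_\Delta$ is not stable, so $\T_{\rE_n}\Op(\Set_\Delta)$ is not $\IbMod(\rE_n)$. The topological statement concerns suspension spectra of unstable bimodules in a category of spectra parametrized over $\rE_n^{\si}$; your displayed sequence places unstable objects in a stable category. You would still have to show four things: the induced functor is compatible with the chains \eqref{eq:optan} on both sides; it sends $\Sigma^\infty_+\rE_n^{\si}$ to $\mathbb{E}_n^{\si}$; it sends the free object to $\Free^{\si}_{\mathbb{E}_n}(\mathcal{E}_*)$; and it sends the topological cotangent complex to $\ovl{\rL}_{\mathbb{E}_n}[-1]$ with the structure of Remark~\ref{r:stlp}. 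Each of these is asserted, and none is checked.

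Your fallback correctly names the obstacle: matching infinitesimal bimodule structures, not just $\Sigma_*$-objects. But it supplies no idea to overcome it, and Dunn additivity is not what produces the sequence. The paper's input is the disc-deletion bundle $\rE_n(k+1)\to\rE_n(k)$ with fiber $\bigvee_k\sS^{n-1}_{\st}$, which yields $\mathbb{E}_n(k+1)\simeq\bigoplus_k\mathbb{E}_n(k)[n-1]\oplus\mathbb{E}_n(k)$ (Remark~\ref{r:split}).

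The device that handles the bimodule structures is to never build bimodule maps by hand. The paper works only with bimodules of the form $\mathbb{E}_n^{\si}\otimes F$ for functors $F$ on $\textbf{Ib}^{\mathbb{E}_n}$, with the diagonal action available because $\mathbb{E}_n$ comes from a simplicial operad. These include $\ovl{\rL}_{\mathbb{E}_n}\cong\mathbb{E}_n^{\si}\otimes\ovl{\rL}^n_{\Com}$ itself, $M_n$ with $F=\underline{1}_*[\textbf{k}[n-1]]$, and $P_n,Q_n,K_n$ with $F=\underline{0}_*[X]$. Maps between them are classified by adjunction, so they are automatically maps in $\IbMod(\mathbb{E}_n)$. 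Only levelwise quasi-isomorphisms then need checking, via the splitting above.

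This gives the cofiber sequences $\ovl{\rL}_{\mathbb{E}_n}[n-1]\to M_n\to Q_n$ and $\Free^{\si}_{\mathbb{E}_n}(\mathcal{E}_*)\to M_n\to P_n$ (Lemmas~\ref{l:firstcof} and~\ref{le:secondcof}). Pasting them along $Q_n\simeq K_n\oplus P_n$ and $\mathbb{E}_n^{\si}\simeq K_n$ yields $\ovl{\rL}_{\mathbb{E}_n}[n-1]\to\Free^{\si}_{\mathbb{E}_n}(\mathcal{E}_*)\to\mathbb{E}_n^{\si}$, which rotates to the claimed sequence. An argument of this kind, or an honest proof of the transport, is what your proposal is missing.
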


\begin{rem} Theorem \ref{t:main3} exhibits a noteworthy property of the operad $\mathbb{E}_n$, namely the connection between its Quillen and Hochschild cohomology. It may be viewed as the dg version of the \textit{Quillen principle for $\E_n$-operads} (see \cite{Hoang, Hoang1}). Moreover,  it reinterprets a significant phenomenon first observed by Quillen \cite{Quillen2} for associative algebras and later developed by Francis \cite{Francis} and Lurie \cite{Lurieha} for $\E_n$-algebras (see also \cite[$\S$1.2.3]{Hoang} for more details).
\end{rem}

An interesting corollary of Theorem \ref{t:main3} is as follows.

\begin{cor}\label{co:main3}\textup{(\ref{co:QprinEnm})} For $m\geq n$ and each $k \neq 0$, there is a canonical isomorphism 
	$$  \HHQ^{-k-n-1}(\mathbb{E}_n ; \mathbb{E}_m) \cong \HHH^{-k}(\mathbb{E}_n ; \mathbb{E}_m),$$
where $\mathbb{E}_m$ is regarded as an infinitesimal $\mathbb{E}_n$-bimodule via the canonical embedding $\mathbb{E}_n \lrar \mathbb{E}_m$. Moreover, the zeroth Hochschild cohomology group decomposes as
	$$ \HHH^{0}(\mathbb{E}_n ; \mathbb{E}_m) \cong \HHQ^{-n-1}(\mathbb{E}_n ; \mathbb{E}_m) \oplus \textbf{k}.$$
\end{cor}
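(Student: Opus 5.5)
We derive the statement from the fiber sequence of Theorem~\ref{t:main3}, applied with $M=\mathbb{E}_m$ regarded as an infinitesimal $\mathbb{E}_n$-bimodule via $\mathbb{E}_n\to\mathbb{E}_m$. Taking homotopy groups of
$$ \Om^{n+1}\HHQ^\star(\mathbb{E}_n ; \mathbb{E}_m) \lrar \HHH^\star(\mathbb{E}_n ; \mathbb{E}_m) \lrar \Map^{\h}_{\C(\textbf{k})}(\textbf{k},\mathbb{E}_m(0)), $$
and its shifts by $[j]$, gives a long exact sequence. Since these are mapping spaces in stable settings, we can work with the corresponding long exact sequence of cohomology groups indexed by all integers. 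With the conventions $\HHQ^j(\P;M)=\pi_0\Map(\rL_\P,M[j])$ and $\Om^{n+1}$ shifting degree by $-(n+1)$, the $k$-th term of the first space is $\HHQ^{k-n-1}$. We obtain a long exact sequence
$$ \cdots \to H^{-k-1}(\mathbb{E}_m(0)) \to \HHQ^{-k-n-1}(\mathbb{E}_n;\mathbb{E}_m) \to \HHH^{-k}(\mathbb{E}_n;\mathbb{E}_m) \to H^{-k}(\mathbb{E}_m(0)) \to \HHQ^{-k-n}(\mathbb{E}_n;\mathbb{E}_m)\to\cdots, $$
where $H^{j}(\mathbb{E}_m(0)) := \pi_0\Map^{\h}_{\C(\textbf{k})}(\textbf{k},\mathbb{E}_m(0)[j])$. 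The precise indexing is one bookkeeping point to double-check against the paper's conventions for $[n]$ and $\Om$. The exponent $-k-n-1$ in the claim is consistent with the shift $\ovl{\rL}_{\mathbb{E}_n}[n]$ and $\rL=\ovl{\rL}[-1]$.

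Next we use that $\mathbb{E}_m(0)\simeq \textbf{k}$ concentrated in degree $0$ (the operad is unital, with $\textbf{k}\cong\mathbb{E}_m(0)$). Hence $H^{j}(\mathbb{E}_m(0))$ vanishes for $j\neq 0$ and equals $\textbf{k}$ for $j=0$. For $k\neq 0$ both $H^{-k}$ and $H^{-k-1}$ vanish except when $k=-1$, where $H^{-k-1}=H^0=\textbf{k}$ maps into $\HHQ^{-n}$. Thus the case $k=-1$ requires an extra check: we must show the connecting map $\textbf{k}=H^0(\mathbb{E}_m(0))\to \HHQ^{-n}$ is zero. This follows once we show that $\HHH^0(\mathbb{E}_n;\mathbb{E}_m)\to H^0(\mathbb{E}_m(0))=\textbf{k}$ is surjective. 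That surjectivity also yields the decomposition in degree $0$, so both claims reduce to it.

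The surjectivity, with a splitting, is the main step. The map $\HHH^\star(\mathbb{E}_n;\mathbb{E}_m)=\Map_{\IbMod(\mathbb{E}_n)}(\mathbb{E}_n^{\si},\mathbb{E}_m)\to \Map(\textbf{k},\mathbb{E}_m(0))$ is restriction along $\Free^{\si}_{\mathbb{E}_n}(\mathcal{E}_*)\to\mathbb{E}_n^{\si}$, which amounts to evaluating at the arity-zero unit. The canonical map of infinitesimal bimodules $\mathbb{E}_n\to\mathbb{E}_m$ restricts to the identification $\textbf{k}\cong\mathbb{E}_n(0)\to\mathbb{E}_m(0)\cong\textbf{k}$, so it hits the generator $1\in\textbf{k}$. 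By $\textbf{k}$-linearity of the group structure, scalar multiples of this map hit every $\lambda\in\textbf{k}$. This gives a $\textbf{k}$-linear section of $\HHH^0\to\textbf{k}$. Hence the map is split surjective, the sequence
$$0\to\HHQ^{-n-1}\to\HHH^0\to\textbf{k}\to 0$$
is split exact, and $\HHH^0\cong\HHQ^{-n-1}\oplus\textbf{k}$. The connecting map into $\HHQ^{-n}$ vanishes, which settles the case $k=-1$. In all remaining cases $k\neq 0$, the two flanking terms are zero and the map $\HHQ^{-k-n-1}\to\HHH^{-k}$ is an isomorphism. The hypothesis $m\geq n$ is used only to have the canonical embedding $\mathbb{E}_n\to\mathbb{E}_m$ and $\mathbb{E}_m(0)\simeq\textbf{k}$.
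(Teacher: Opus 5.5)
Your proposal is correct and is essentially the paper's argument. The paper shows that the connecting map $\ovl{\varphi}\colon \textbf{k}\lrar\HHQ^{-n}(\mathbb{E}_n;\mathbb{E}_m)$ vanishes because the generator $\xi_m$ factors as $\Free_{\mathbb{E}_n}^{\si}(\mathcal{E}_*)\x{\xi_n}{\lrar}\mathbb{E}_n\lrar\mathbb{E}_m$ with $\xi_n\circ\varphi\simeq 0$; this is exactly your observation that $\xi_m$ is the restriction of the canonical map $\mathbb{E}_n^{\si}\lrar\mathbb{E}_m$, read through exactness, and your explicit $\textbf{k}$-linear section additionally makes the splitting $\HHH^{0}\cong\HHQ^{-n-1}\oplus\textbf{k}$ explicit, which the paper leaves implicit.
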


\subsection{Deformation theory and Quillen cohomology}

We further assume that $\textbf{k}$ is a field of characteristic $0$, and let $R$ be an artinian $\textbf{k}$-algebra with augmentation map $\eps:R \lrar \textbf{k}$,  regarded as a ``small perturbation'' in deformation theory.   

The (\textbf{infinitesimal}) \textbf{deformation theory} studies ``infinitesimal variations'' of a given object that are structurally very similar to it: after base change along the augmentation, they recover the original object. Concretely, for an \textit{algebraic object} $X$ (e.g., a dg operadic algebra, a dg category, or a dg operad over $\textbf{k}$), a \textbf{deformation of $X$ over} $R$ is an object $Y$ of the same type, defined over $R$, and equipped with a weak equivalence $\eta : Y \otimes_{R} \textbf{k} \lrarsimeq X$. Moreover, an \textit{equivalence} $(Y,\eta) \lrarsimeq (Y',\eta')$ between deformations is a weak equivalence $Y \lrarsimeq Y'$ compatible with the maps $\eta$ and $\eta'$. (See $\S$\ref{sub:fmc} for more details.)

 We denote by $\underline{\Def}(X,R)$ the \textbf{set of deformations of $X$ over} $R$, modulo this notion of ``equivalence''. 

For $R = \textbf{k}[t]/(t^2)$ the \textbf{algebra of dual numbers}, one speaks of the \textbf{first-order deformations of $X$}.    

\begin{example}\label{ex:defclass} The deformation theory of various algebraic structures has been studied extensively over the past several decades (see, e.g. \cite{Gerten, Nijen, Loday, Lurieha}). Its early development focused on associative, commutative, and Lie algebras, and revealed a unifying feature: the equivalence classes of first-order deformations coincide with degree-two cohomology classes, i.e.,
 \begin{equation}\label{eq:defclass}
 	\underline{\Def}(A,\textbf{k}[t]/(t^2)) \cong \sH^2(A;A).
 \end{equation}
Here, the relevant cohomology theories are standard: \textbf{Hochschild}, \textbf{Harrison}, or \textbf{Lie cohomology} when $A$ is an associative, commutative, or Lie algebra, respectively. Nevertheless, this no longer holds in more general contexts (e.g., in the differential graded setting).
\end{example}

Our work refines and develops this theory in two directions. (The full results are formulated in a more abstract and general framework, using a construction called the \textbf{formal moduli context}, which is not convenient to present here. The statements below are therefore given in a simplified form.)

\smallskip
  
(1) Although these standard cohomology theories are computationally convenient, they do not yield the correct deformation invariant; instead, Quillen cohomology does. Indeed, the identification \eqref{eq:defclass} can equivalently be rewritten as
$$ \underline{\Def}(A,\textbf{k}[t]/(t^2)) \cong \HHQ^1(A;A),$$
and this formulation remains valid in far broader contexts. 

We can say even more. For an algebraic object $X$ and $R\in\CAlg^{\art}$ as above, one can organize all deformations of $X$ over $R$  into a category in which every morphism is an equivalence of deformations. We then take $\Def(X,R)$ to be the \textit{Kan replacement} of the \textit{nerve} of this category, and refer to it as the \textbf{space of deformations of $X$ over $R$}. Clearly,
$$ \pi_0\Def(X,R) \cong \underline{\Def}(X,R).$$ 
Moreover, we obtain the following simplified form of Theorem~\ref{t:main}.
\begin{thm} There is a weak equivalence
	$$ \Omega\Def(X,\textbf{k}[t]/(t^2)) \simeq \HHQ^\star(X ; X)  $$
between the loop space of the space of first-order deformations of $X$ and the Quillen cohomology of $X$ with ``coefficients in itself''. Consequently, there is a canonical isomorphism 
$$ \pi_0 \Def(X,\textbf{k}[t]/(t^2)) \cong \HHQ^1(X;X),$$ 
and for every $n\geq1$, there is a canonical isomorphism 
$$ \pi_n\Def(X,\textbf{k}[t]/(t^2)) \cong \HHQ^{1-n}(X;X).$$
\end{thm}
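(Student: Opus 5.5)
The plan is to place everything inside the formal moduli framework. Fix $X$ and consider the functor
$$\Def(X,-) : \CAlg^{\art} \lrar \Set_\Delta, \qquad R \longmapsto \Def(X,R).$$
I would argue in four steps.

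\textbf{Step 1: $\Def(X,-)$ is a formal moduli problem.} First, $\Def(X,\textbf{k})\simeq *$, since the only deformation over $\textbf{k}$ is $(X,\mathrm{id})$ up to equivalence. Second, the functor satisfies Schlessinger's condition: for a pullback $R\times_{R''}R'$ of artinian algebras in which $R'\to R''$ is surjective on $\pi_0$, the canonical map
$$\Def(X,R\times_{R''}R') \lrar \Def(X,R)\times^{\h}_{\Def(X,R'')}\Def(X,R')$$
is a weak equivalence. I would prove this by establishing a gluing (Milnor-square) equivalence for module-type categories over such a pullback. Concretely, objects over $R\times_{R''}R'$ correspond to triples $(Y,Y',\alpha)$, where $\alpha: Y\otimes_R R''\simeq Y'\otimes_{R'}R''$. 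This equivalence holds on the full subcategories of objects that are flat (or cofibrant) over the base. One then restricts to those triples which reduce to $X$ over $\textbf{k}$.

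\textbf{Step 2: identify the tangent fiber.} The key computation is the case of trivial square-zero extensions $\textbf{k}\oplus\textbf{k}[m]$. Here I want a natural equivalence
$$\Def(X,\textbf{k}\oplus\textbf{k}[m]) \simeq \Map^{\h}_{\T_X\textbf{M}}\bigl(\rL_X, X[m+1]\bigr)$$
for $m\ge 0$, with $X$ viewed as a tangent object by the usual abuse of notation. To get it, observe that a deformation over $\textbf{k}\oplus\textbf{k}[m]$ is, by base change and forgetting, an object $Y$ equipped with a map $Y\to X$ whose fiber is $X[m]$ as an $X$-module. Such $Y$ are square-zero extensions of $X$ by $X[m]$. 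By the cotangent complex formalism (Lurie, and the tangent-category results of Harpaz--Nuiten--Prasma cited above), square-zero extensions of $X$ by $M$ are classified by derivations $\rL_X\to M[1]$. This gives the displayed equivalence. The expected dictionary is: the space of deformations over $\textbf{k}[\epsilon_m]$ is the space of square-zero extensions by $X[m]$, which is $\HHQ^\star(X;X[m+1])$.

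\textbf{Step 3: deduce the statement.} Since $\textbf{k}\oplus\textbf{k}[m-1]\simeq \textbf{k}\times_{\textbf{k}\oplus\textbf{k}[m]}\textbf{k}$, Step 1 gives
$$\Omega\Def(X,\textbf{k}\oplus\textbf{k}[m])\simeq \Def(X,\textbf{k}\oplus\textbf{k}[m-1]).$$
Taking $m=1$, Step 2 yields
$$\Omega\Def(X,\textbf{k}[t]/(t^2)) \simeq \Omega\,\Map^{\h}(\rL_X,X[1]) \simeq \Map^{\h}(\rL_X,X) = \HHQ^\star(X;X).$$
The homotopy groups then follow directly. We have $\pi_0\Def=\pi_0\Map(\rL_X,X[1])=\HHQ^1(X;X)$, and for $n\ge1$, $\pi_n\Def\cong\pi_{n-1}\HHQ^\star(X;X)\cong \HHQ^{1-n}(X;X)$, using stability of $\T_X\textbf{M}$.

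\textbf{Main obstacle.} I expect Step 2 to be the hardest, specifically its use of base change. One must show that a deformation over $\textbf{k}\oplus\textbf{k}[m]$, after restriction of scalars, really is a square-zero extension of $X$ in $\textbf{M}$, and that the correspondence is an equivalence of spaces rather than merely a bijection on $\pi_0$. My first attempt would be to construct both directions explicitly:
\begin{itemize}
\item from a derivation $d:\rL_X\to X[m+1]$, form the pullback $X\times_{X\oplus X[m+1]}X$ along $d$ and the zero section, and check that it is naturally a $\textbf{k}\oplus\textbf{k}[m]$-object reducing to $X$;
\item conversely, use the universal property of $\Sigma^\infty_+$ to produce a derivation from a deformation.
\end{itemize}
I would then verify that the two constructions are mutually inverse using connectivity of $\textbf{k}\oplus\textbf{k}[m]$.
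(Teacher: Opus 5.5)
Your Steps 1 and 3 are sound. Step 1 is exactly the gluing property the paper builds into the definition of a formal moduli context and then verifies for modules, algebras, categories and operads. Step 2, however, carries the whole content of the theorem, and as written it is only asserted. It needs two things:
(i) after restriction of scalars, a deformation $Y$ over $\textbf{k}\oplus\textbf{k}[m]$ is a square-zero extension of $X$, i.e.\ the fiber of a derivation $X\to\Omega^\infty_+(X[m+1])$;
(ii) passing from $Y\in\F(\textbf{k}\oplus\textbf{k}[m])$ to this extension in $\F(\textbf{k})$, which forgets the $\textbf{k}\oplus\textbf{k}[m]$-linear structure, is an equivalence of spaces.

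In the cotangent complex formalism, ``square-zero extensions are classified by derivations'' is essentially the definition. The claim that an arbitrary extension $Y\to X$ with fiber $X[m]$ is square-zero is a separate, context-specific and deep result (compare Lurie's theorem for $\E_k$-algebras, which needs connectivity hypotheses). In the abstract setting of Theorem~\ref{t:main} there is not even a notion of ``fiber as an $X$-module'' in which to state it. Point (ii) is likewise unsupported. Your inverse forms a pullback $X\times_{X\oplus X[m+1]}X$ in $\F(\textbf{k})$, but nothing in $\F(\textbf{k})$ supplies a $\textbf{k}\oplus\textbf{k}[m]$-linear structure on it. Also, ``use the universal property of $\Sigma^\infty_+$'' does not say how to extract a derivation from $Y$.

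The missing idea is that you never need to classify nontrivial deformations: use the delooping you record in Step 3 in the other direction. Since $\textbf{k}\oplus\textbf{k}[m]\simeq\textbf{k}\times_{\textbf{k}\oplus\textbf{k}[m+1]}\textbf{k}$, the gluing of Step 1 identifies $\Def(X,\textbf{k}\oplus\textbf{k}[m])$ with the loop space of $\Def(X,\textbf{k}\oplus\textbf{k}[m+1])$ at the trivial deformation. That is, it is the space of self-equivalences of $u_!X$ that reduce to $\Id_X$, where $u:\textbf{k}\to\textbf{k}\oplus\textbf{k}[m+1]$ is the unit. One more Cartesian square of the same type shows that $\F(\textbf{k}\oplus\textbf{k}[m+1])_\infty\to\F(\textbf{k})_\infty$ is conservative. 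So this space is the homotopy fiber over $\Id_X$ of $\Map^{\h}(u_!X,u_!X)\to\Map^{\h}(X,X)$, which by the adjunction $u_!\dashv u^*$ is $\Map^{\h}_{\F(\textbf{k})_{/X}}(X,u^*u_!X)$. By construction, $u^*u_!X$ is $\Omega^\infty_+$ applied to the shifted coefficient spectrum (the paper's $M(X)[1]$, your ``$X[m+1]$''). Remark~\ref{r:modelQ} then gives $\Def(X,\textbf{k}\oplus\textbf{k}[m])\simeq\HHQ^\star(X;X[m+1])$, using only automorphisms of the trivial deformation and an adjunction. This is the paper's proof of Theorem~\ref{t:main}; with Step 2 replaced by it, your Step 3 goes through.
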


\smallskip

(2) Let $\textbf{S}$ denote the \textbf{$\infty$-category of spaces}. We also prove that the \textbf{deformation functor}
$$  \Def(X,-) : \CAlg^{\art}_\infty \lrar \textbf{S}, \; R \mapsto \Def(X,R)$$
forms a \textbf{formal moduli problem} in the sense of Lurie (see $\S$\ref{sub:fmc}). Thus, by the \textbf{Pridham-Lurie theorem}, the deformations of $X$ are governed by a single \textit{dg Lie algebra} (cf.  \cite{Pridham, Luriefmp}). 

\begin{example} Let $\P\in\Op(\C(\textbf{k}))$ be a (\textit{single-colored}) \textit{connective augmented dg operad}. We can then take a cofibrant resolution of $\P$ of the form $$\phi : \Omega(\C) \lrarsimeq \P$$ where $\C$ is a \textbf{dg cooperad} which models the \textbf{bar construction} of $\P$, and $\Omega(-)$ refers to the \textbf{cobar construction}. The map $\phi$ corresponds to a \textit{Maurer-Cartan element} of the \textbf{convolution dg Lie algebra} $\Hom_\Sigma(\C,\P)$. (See \cite[Chapter 6]{Loday} for more details.) Moreover, we can show that the \textbf{twisted dg Lie algebra} $\Hom_\Sigma^\phi(\C,\P)$ is precisely the dg Lie algebra classifying deformations of $\P$ (regarded as an object of $\Op(\C(\textbf{k}))$).
\end{example}

Finally, the first-order deformations of the operad $\mathbb{E}_n$ can be interpreted using the Hochschild cohomology, as stated below.

\begin{thm}\textup{(\ref{co:defEn})} There is a fiber sequence of spaces
	$$ \Om^{n+2}\Def(\mathbb{E}_n,\textbf{k}[t]/(t^2)) \lrar \HHH^\star(\mathbb{E}_n ; \mathbb{E}_n^{\si} ) \lrar \textbf{k}.$$
	Furthermore, for $k\geq -n-1$ and $k \neq 0$, there is a canonical isomorphism
	$$ \pi_{k+n+2}\Def(\mathbb{E}_n,\textbf{k}[t]/(t^2))  \cong \HHH^{-k}(\mathbb{E}_n ; \mathbb{E}_n^{\si}),$$
	and, in addition,
	$$  \HHH^{0}(\mathbb{E}_n ; \mathbb{E}_n^{\si}) \cong \pi_{n+2}\Def(\mathbb{E}_n,\textbf{k}[t]/(t^2))  \oplus \textbf{k}.$$
\end{thm}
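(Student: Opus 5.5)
The plan is to combine the simplified form of Theorem~\ref{t:main} (the loop-space formula $\Omega\Def(X,\textbf{k}[t]/(t^2)) \simeq \HHQ^\star(X;X)$) with the fiber sequence of Theorem~\ref{t:main3}. We apply Theorem~\ref{t:main} to $X=\mathbb{E}_n$, viewed as an object of $\Op(\C(\textbf{k}))$. Here the coefficient ``$X$ itself'' is $\mathbb{E}_n^{\si}$, regarded as an object of $\T_{\mathbb{E}_n}\Op(\C(\textbf{k}))\simeq \IbMod(\mathbb{E}_n)$. This gives $\Omega\Def(\mathbb{E}_n,\textbf{k}[t]/(t^2)) \simeq \HHQ^\star(\mathbb{E}_n;\mathbb{E}_n^{\si})$, and hence $\Om^{n+2}\Def(\mathbb{E}_n,\textbf{k}[t]/(t^2)) \simeq \Om^{n+1}\HHQ^\star(\mathbb{E}_n;\mathbb{E}_n^{\si})$.

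Next we substitute $M=\mathbb{E}_n^{\si}$ into the fiber sequence of Theorem~\ref{t:main3}:
$$\Om^{n+1}\HHQ^\star(\mathbb{E}_n;\mathbb{E}_n^{\si}) \lrar \HHH^\star(\mathbb{E}_n;\mathbb{E}_n^{\si}) \lrar \Map^{\h}_{\C(\textbf{k})}(\textbf{k},\mathbb{E}_n(0)).$$
Since $\mathbb{E}_n(0)\cong\textbf{k}$ in degree $0$, the last term is the mapping space $\Map^{\h}(\textbf{k},\textbf{k})$. This is the discrete space $\textbf{k}$, with no higher homotopy, because the Dold--Kan space of a complex concentrated in degree $0$ is discrete. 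Combining with the previous paragraph yields the claimed fiber sequence.

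For the homotopy groups we take the long exact sequence. The base has $\pi_0=\textbf{k}$ and vanishing $\pi_i$ for $i>0$. Hence for $j\geq1$ we get $\pi_j$ of the fiber $\cong \pi_j\HHH^\star = \HHH^{-j}$, and $\pi_j$ of the fiber equals $\pi_{j+n+2}\Def$. Setting $j=-k$ handles the range where the Hochschild index is negative.

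The remaining indices need more care, and this is the main obstacle, since spaces only see nonnegative homotopy. Our first attempt is to work spectrally, deloop the sequences, or equivalently apply the fiber sequence to shifts $M[i]$, i.e.\ to $\HHH^{-k}$ with $k<0$. This is exactly the content of Corollary~\ref{co:main3}, whose argument uses only the cofiber sequence $\Free^{\si}_{\mathbb{E}_n}(\mathcal{E}_*)\to\mathbb{E}_n^{\si}\to\ovl{\rL}_{\mathbb{E}_n}[n]$ and the fact that $\Map(\textbf{k},M(0)[i])$ contributes only $\pi_{-i}$ of $\mathbb{E}_n(0)$. Taking $m=n$ in Corollary~\ref{co:main3}, we obtain $\HHQ^{-k-n-1}(\mathbb{E}_n;\mathbb{E}_n)\cong \HHH^{-k}(\mathbb{E}_n;\mathbb{E}_n)$ for $k\neq0$, together with $\HHH^0\cong\HHQ^{-n-1}\oplus\textbf{k}$. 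We then translate via $\pi_p\Def(\mathbb{E}_n,\textbf{k}[t]/(t^2))\cong\HHQ^{1-p}(\mathbb{E}_n;\mathbb{E}_n)$ from the simplified Theorem~\ref{t:main}, valid for $p\geq1$. With $p=k+n+2$ this gives $\HHQ^{-k-n-1}$, and the constraint $p\geq 1$ is precisely $k\geq -n-1$. The case $k=0$ gives $p=n+2$, matching the splitting $\HHH^0\cong\pi_{n+2}\Def\oplus\textbf{k}$.

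We must also check that the splitting is canonical. The surjection $\HHH^0\to\textbf{k}$ evaluates at arity $0$, and it is split by the unit/identity class of $\mathbb{E}_n^{\si}$, which maps to $1$.
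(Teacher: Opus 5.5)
Your proposal is correct and follows the paper's route. The paper likewise combines three results: Theorem~\ref{t:main} in the form of Corollary~\ref{co:defop} with $A=\textbf{k}$, so that the coefficient is $\mathbb{E}_n^{\si}$; the fiber sequence of Corollary~\ref{co:QprinEn} with $M=\mathbb{E}_n^{\si}$, where $|\mathbb{E}_n(0)|\simeq\textbf{k}$ is discrete; and Corollary~\ref{co:QprinEnm} with $m=n$, translated through $\pi_p\Def\cong\HHQ^{1-p}$ for $p=k+n+2\geq 1$. One small slip: in your long-exact-sequence step the substitution should be $j=k$, giving $\pi_{k+n+2}\Def\cong\HHH^{-k}$ for $k\geq 1$, not $j=-k$; that step is in any case subsumed by the Corollary~\ref{co:QprinEnm} argument.
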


\bigskip

\textbf{\underline{Acknowledgements}.} This paper forms part of the second author's PhD thesis, written under the supervision of the first author at Université Sorbonne Paris Nord. The authors acknowledge support from the French project ANR-16-CE40-0003 ChroK.

\section{Background  and conventions}\label{s:bnc}

We briefly recall some fundamental concepts and their notations, which will be used throughout the paper. 

\subsection{Operads and various types of modules over an operad}\label{s:opmod}

Let $(\SS, \otimes, 1_\cS)$ be a \textbf{symmetric monoidal category} such that the underlying category $\cS$ is cocomplete. We will henceforth refer to $\cS$ as the \textbf{base category}. 

\smallskip

Let $C$ be a set, regarded as the \textbf{set of colors}. We denote by $\Coll_C(\SS)$ the \textbf{category of (symmetric)} $C$-\textbf{collections} in $\cS$. For more details, an object of $\Coll_C(\SS)$ is a collection $$M = \{M(c_1,\cdots,c_n;c) \, | \, c_i,c \in C , n\geqslant0  \}$$ of objects in $\cS$ that is endowed, for each permutation $\sigma\in \Sigma_n$ and each $C$-\textbf{sequence} $(c_1,\cdots,c_n;c)$, with a map of the form $$\sigma^{*} : M(c_1,\cdots,c_n;c) \lrar M(c_{\sigma(1)},\cdots,c_{\sigma(n)};c).$$ Moreover, these maps together define a right action by $\Sigma_n$.

\begin{notn} \phantomsection\label{no:IE}
	\begin{enumerate}	 
\item We denote by $\I_C$ (resp. $\mathcal{E}_C$) the $C$-collection which is the initial object $\emptyset_\cS$  at all levels, except that $\I_C(c;c)=1_{\cS}$ (resp. $\mathcal{E}_C(c) = 1_\cS$) for every $c\in C$.

\item When $C$ is a singleton, a $C$-collection is called a $\Sigma_*$-\textbf{object}, and the corresponding category is denoted by $\Sigma_*(\cS)$. Moreover, in this case, the two above will be denoted by $\I_*$ and $\mathcal{E}_*$, respectively. 
\end{enumerate}
\end{notn}

 The object $\I_C$ represents the \textit{monoidal unit} of $\Coll_C(\SS)$ with respect to the  \textbf{composite product}
$$ -\circ- : \Coll_C(\SS) \times \Coll_C(\SS) \lrar \Coll_C(\SS)$$
(see, e.g., \cite{Yonatan}). One then has the following definition.

\begin{define}\phantomsection\label{d:opc} We denote by $\Op_C(\SS)$ the category of \textit{monoid objects} in $(\Coll_C(\SS),-\circ-,\I_C)$, and refer to it as the \textbf{category of (symmetric) $C$-colored operads in} $\cS$. Explicitly, a $C$-colored operad in $\cS$ is a $C$-collection $\P$ equipped with 
	
	\begin{enumerate} 
	\item  a \textbf{composition}, consisting of $\Sigma_*$-equivariant maps of the form 
		\begin{align*}
		& \P(c_1,\cdots,c_n;c) \otimes \P(c_{1,1},\cdots,c_{1,k_1};c_1) \otimes \cdots \otimes \P(c_{n,1},\cdots,c_{{n,k_n}};c_n) \\
		&\phantom{\P(c_1,\cdots,c_n;c) \otimes \P(c_{1,1},\cdots,c_{1,k_1};c_1)  {}} \lrar \P(c_{1,1},\cdots,c_{1,k_1}, \cdots, c_{n,1},\cdots,c_{{n,k_n}} ; c),
	\end{align*}	

	\item and for each $c \in C$, a \textbf{unit operation} $\id_c : 1_\cS \lrar \P(c;c)$.
\end{enumerate}

	\noindent These are required to satisfy the usual axioms of associativity and unitality. 
\end{define} 

We will write $\Op_*(\cS)$ to denote the \textbf{category of single-colored operads in} $\cS$.

\begin{example} \phantomsection\label{ex:ascom}
	\begin{enumerate}	 
		\item The \textbf{commutative operad} $\Com \in \Op_*(\cS)$ is defined by letting $\Com(n)=1_\cS$ for every $n\in\NN$, equipped with the trivial action by the symmetric groups. The composition is defined in an evident way.

		\item Suppose further that the monoidal product distributes over colimits in $\cS$. One can then define the \textbf{associative operad} $\Ass \in \Op_*(\cS)$ by setting $\Ass(n) = \underset{\Sigma_n}{\bigsqcup  } 1_\cS$ for every $n\in\NN$. The symmetric action is induced by the multiplication in $\Sigma_n$, and the composition is induced by the concatenation of linear orders.
	\end{enumerate}
\end{example}

\begin{define}\label{d:op} The \textbf{category of $\cS$}-\textbf{enriched operads}, denoted by $\Op(\SS)$, has as objects the pairs $(C,\P)$ where $C\in\Sets$ and $\P\in\Op_C(\SS)$. A morphism from $(C,\P)$ to $(D,\Q)$ is the choice of a map $\alpha : C \lrar D$ of sets, together with a map in $\Op_C(\SS)$ of the form $\P \lrar \alpha^*\Q$ where $\alpha^*\Q$ is the $C$-colored operad given at each level by
$$ \alpha^{*}\Q(c_1,\cdots,c_n;c) := \Q(\alpha(c_1),\cdots,\alpha(c_n);\alpha(c)).$$
\end{define}

\begin{rem}\label{r:opCop} For any $\P\in\Op_C(\SS)$, there is an adjunction  between \textit{under categories}
	$$ \adjunction*{}{\Op_C(\SS)_{\P/}}{\Op(\SS)_{\P/}}{} $$
in which the left adjoint is the canonical embedding, and the right adjoint is induced by the restriction of colors, as described in Definition \ref{d:op}. 
\end{rem}

\begin{rem}\label{r:catop} We denote by $\Cat(\cS)$ the \textbf{category of (small) categories enriched in $\cS$}. There is an obvious embedding $\iota : \Cat(\cS) \lrar \Op(\SS)$. Conversely, for an operad $\P \in \Op(\SS)$, we denote by $\P_1 \in \Cat(\cS)$ the category whose objects are the colors of $\P$ and whose mapping spaces are given by the collection of \textit{unary} (i.e., $1$-ary) operations. We refer to $\P_1$ as the \textbf{underlying category} of $\P$.  These constructions determine an adjunction
	$$ \iota : \adjunction*{}{\Cat(\cS)}{\Op(\SS)}{} : (-)_1.$$
\end{rem}

\smallskip

Now let $\P\in\Op_C(\SS)$ be a fixed $C$-colored operad. Various types of modules over $\P$ arise naturally by viewing $\P$ as a monoid object.

\begin{dfn} \label{d:opmod}
		(i) A \textbf{left $\mathcal{P}$-module} is a $C$-collection $M$ equipped with a map $\mathcal{P}\circ M\lrar M$, specified by a family of $\Sigma_*$-equivariant maps of the form
		\begin{align*}
			&\circ^\ell : \P(c_1,\cdots,c_n;c) \otimes M(d_{1,1},\cdots,d_{1,k_1};c_1) \otimes \cdots \otimes M(d_{n,1},\cdots,d_{n,k_n};c_n) \\
			&\phantom{\circ^\ell : \P(c_1,\cdots,c_n;c) \otimes M(d_{1,1},\cdots,d_{1,k_1};c_1)  {}} \lrar M(d_{1,1},\cdots,d_{1,k_1},\cdots,d_{n,1},\cdots,d_{n,k_n};c).
		\end{align*}
         Moreover, these maps are required to satisfy the associativity and unitality axioms for left modules.

		(ii) A \textbf{right $\mathcal{P}$-module} is a $C$-collection $M$ equipped with a map $M \circ \P \lrar M$, specified by a family of $\Sigma_*$-equivariant maps of the form
			\begin{align*}
			&\circ^{\sr} : M(c_1,\cdots,c_n;c) \otimes \P(d_{1,1},\cdots,d_{1,k_1};c_1) \otimes \cdots \otimes \P(d_{n,1},\cdots,d_{n,k_n};c_n) \\
			&\phantom{\circ^{\sr} : M(c_1,\cdots,c_n;c) \otimes \P(d_{1,1},\cdots,d_{1,k_1};c_1)  {}} \lrar M(d_{1,1},\cdots,d_{1,k_1},\cdots,d_{n,1},\cdots,d_{n,k_n};c)
		\end{align*}
			satisfying the axioms of associativity and unitality for right modules.
		
		(iii) A $\P$\textbf{-bimodule} is a $C$-collection $M$ equipped with both a left and a right $\mathcal{P}$-module structure, satisfying the compatibility axiom between the two actions.
\end{dfn}  

We denote by $\LMod(\P)$, $\RMod(\P)$, and $\BMod(\P)$ the categories of \textbf{left} $\P$-\textbf{modules}, \textbf{right} $\P$-\textbf{modules}, and $\P$-\textbf{bimodules}, respectively.

\begin{rem}\label{r:biopC} As usual, a map $\P \lrar \Q$ in $\Op_C(\SS)$ endows $\Q$ with the structure of a $\P$-bimodule (under $\P$). We thus obtain a \textit{restriction functor} $\Op_C(\SS)_{\P/} \lrar \BMod(\P)_{\P/}$. In practice, this typically arises as part of an adjunction of \textit{induction-restriction functors}:
	$$ \adjunction*{}{\BMod(\P)_{\P/}}{\Op_C(\SS)_{\P/}}{}.$$
\end{rem}

Next, a $\P$-\textit{algebra} is simply a left $\P$-module concentrated in level $0$. It is described explicitly as follows.

\begin{define}\label{d:alg} A $\P$-\textbf{algebra} $A$ consists of a collection $\{A(c)\}_{c\in C}$ of objects in $\cS$ equipped with action maps of the form
		$$ \P(c_1,\cdots,c_n;c)\otimes A(c_1)\otimes\cdots\otimes A(c_n) \lrar A(c)$$
which satisfy the usual  axioms of associativity, unitality, and equivariance. We denote by $\Alg_\P(\SS)$ the \textbf{category of $\P$-algebras}.
\end{define}

Furthermore, the notion of $A$-\textit{modules over} $\P$ provides \textit{representations} for the $\P$-algebra $A$.

\begin{define}\label{d:opmodule} An $A$\textbf{-module} (\textbf{over} $\P$) is a collection  $M=\{M(c)\}_{c\in C}$ of objects in $\cS$ endowed, for each $C$-sequence $(c_1,\cdots,c_n;c)$ and $k\in\{1,\cdots,n\}$, with an action map of the form
	$$  \P(c_1,\cdots,c_n;c) \otimes \;  \bigotimes _{i \in \{1,\cdots,n\} \setminus \{k\}} \; A(c_i) \otimes M(c_k) \lrar M(c).$$
	These maps must satisfy the usual associativity, unitality, and equivariance axioms. We let $\Mod_\P^{A}$ denote the \textbf{category of $A$-modules}.
\end{define}

\begin{rem}\label{r:modAalg} A map $A \lrar B$ of $\P$-algebras endows $B$ with the structure of an $A$-module (under $A$). This procedure yields a \textit{restriction functor} $\Alg_\P(\SS)_{A/}\lrar (\Mod_\P^{A})_{A/}$. Again, this usually arises as part of an adjunction of \textit{induction-restriction functors}:
	$$ \adjunction*{}{(\Mod_\P^{A})_{A/}}{\Alg_\P(\SS)_{A/}}{}.$$
\end{rem}

\subsection{Infinitesimal bimodules over an operad}\label{s:opinfi}

Throughout this subsection, we assume that $(\SS, \otimes, 1_\cS)$ is a \textbf{closed symmetric monoidal category} such that the underlying category $\cS$ is bicomplete. Let $\P\in\Op_C(\cS)$ be a $C$-colored operad in $\cS$, for some set of colors $C$. 

\textit{Operadic infinitesimal bimodules}, as introduced by Merkulov-Vallette \cite{Vallette}, will be of particular interest in this paper. To begin with, we write ``$\circ_{(1)}$'' for the \textbf{infinitesimal composite product} $$ - \circ_{(1)} - : \Coll_C(\cS) \times \Coll_C(\cS) \lrar \Coll_C(\cS)$$ 
that is characterized as the \textit{right linearization} of the usual composite product $- \circ -$ (see \cite{Loday, Vallette}). In particular, $- \circ_{(1)} -$ distributes over colimits.

\begin{define}\label{d:infbi} (i) An \textbf{infinitesimal left $\P$-module} is a $C$-collection $M$ equipped with an action map $\mathcal{P}\circ_{(1)}M \lrar M$ whose data consist of  $\Sigma_*$-equivariant maps of the form
	$$ \circ^{i\ell} : \P(c_1,\cdots,c_n;c) \otimes M(d_1,\cdots,d_m;c_i) \lrar M(c_1,\cdots,c_{i-1},d_1,\cdots,d_m,c_{i+1},\cdots,c_n;c),$$
subject to the usual associativity and unitality axioms for left modules.

(ii) An \textbf{infinitesimal} $\P$-\textbf{bimodule} is a $C$-collection $M$ equipped with both an infinitesimal left $\P$-module and a right $\P$-module structure, which are subject to the usual compatibility axiom. (See \cite{Hoang, Loday, Vallette} for more details.)
\end{define}

\begin{rem}\label{r:infright} Giving $M\in\Coll_C(\cS)$ the structure of a right $\P$-module is equivalent to endowing it with an \textbf{infinitesimal right $\P$-module} structure. Explicitly, the latter is given  by a map $M \circ_{(1)} \P \lrar M$ whose data consist of $\Sigma_*$-equivariant maps of the form
	$$ \circ^{\ir} : M(c_1,\cdots,c_n;c) \otimes \P(d_1,\cdots,d_m;c_j) \lrar M(c_1,\cdots,c_{j-1},d_1,\cdots,d_m,c_{j+1},\cdots,c_n;c).$$
\end{rem}

We denote by $\ILMod(\P)$ (resp. $\IbMod(\P)$) the \textbf{category of infinitesimal left $\P$-modules} (resp. \textbf{infinitesimal $\P$-bimodules}).

\begin{notn}\label{no:freeib} We will write 
	\begin{gather*} \Free_\P^{i\ell} : \Coll_C(\cS) \lrar \ILMod(\P),  \; \text{and} \\
		\Free_\P^{\si} : \Coll_C(\cS) \lrar \IbMod(\P)
	\end{gather*}
	for the \textit{free functors} that are left adjoints to the associated forgetful functors.
\end{notn}

\begin{rem}\label{r:inffr} As in \cite[$\S$2.2]{Hoang}, for each $C$-collection $M$ we have 
	$$ \Free_\P^{i\ell}(M) \cong \P\circ_{(1)}M \;\; \text{and}  \;\; \Free_\P^{\si}(M) \cong \mathcal{P}\circ_{(1)}( M \circ \mathcal{P}).$$
 In particular, when $C = \{*\}$  and $M=\mathcal{E}_*$ (see Notation \ref{no:IE}), we obtain  $$ \Free_\P^{\si}(\mathcal{E}_*) \cong  \mathcal{P}\circ_{(1)}(\mathcal{E}_*\circ\P) \cong \mathcal{P}\circ_{(1)}\mathcal{E}_* \cong \Free_\P^{i\ell}(\mathcal{E}_*).$$
Concretely, this is given at each level by $(\mathcal{P}\circ_{(1)}\mathcal{E}_*)(n)= \mathcal{P}(n+1)$ with the infinitesimal $\P$-bimodule structure canonically  induced by the composition in $\P$.
\end{rem}

\begin{rem}\label{r:infbibi} A map $\P \lrar M$ in $\BMod(\P)$ canonically  endows $M$ with the structure of an infinitesimal $\P$-bimodule (under $\P$). We thus obtain a \textit{restriction functor} $\BMod(\P)_{\P/} \lrar \IbMod(\P)_{\P/}$. Moreover, one typically has an adjunction of \textit{induction-restriction functors}:
	$$ \adjunction*{}{\BMod(\P)_{\P/}}{\IbMod(\P)_{\P/}}{}.$$
\end{rem}

Each of the categories $\IbMod(\P)$, $\RMod(\P)$ and $\ILMod(\P)$  can be represented as a category of $\cS$-valued enriched functors  (cf. \cite{Hoang1}). Here we recall the construction for $\IbMod(\P)$. 

\begin{notn} We let $\Fin_*$ denote the skeleton of the \textbf{category of finite pointed sets}. Its objects are the pointed sets $\left \langle m \right \rangle := \{0,1,\cdots,m\}$ for  $m\geqslant0$, with $0$ as the base point.
\end{notn}

\begin{cons} \label{infbimod} The category encoding infinitesimal $\P$-bimodules is denoted by $\textbf{Ib}^{\mathcal{P}}$. Its objects are precisely the $C$-sequences, and the mapping spaces are defined as follows. First, for each map $f : \l m \r \lrar \l n \r$ in $\Fin_*$, we define
	$$  \Map^f_{\textbf{Ib}^{\mathcal{P}}}((c_1,\cdots,c_n;c) , (d_1,\cdots,d_m;d)) := \mathcal{P}\left (c,\{d_j\}_{j\in f^{-1}(0)};d \right ) \otimes \bigotimes_{i=1,\cdots,n} \mathcal{P} \left (\{d_j\}_{j\in f^{-1}(i)};c_i \right ).$$
	Then we define
	$$ \Map_{\textbf{Ib}^{\mathcal{P}}}((c_1,\cdots,c_n;c) , (d_1,\cdots,d_m;d)) := \bigsqcup_{\left \langle m \right \rangle  \overset{f }{\rar} \left \langle n \right \rangle} \Map^f_{\textbf{Ib}^{\mathcal{P}}}((c_1,\cdots,c_n;c) , (d_1,\cdots,d_m;d) ) $$
	where the coproduct ranges over $\Hom_{\Fin_*}(\l m \r, \l n \r)$. The unit morphisms of $\textbf{Ib}^{\mathcal{P}}$ are defined via the unit operations of $\mathcal{P}$, and moreover, the categorical structure maps are canonically induced by the composition in $\mathcal{P}$.
\end{cons}

\begin{prop}\textup{(\cite{Hoang}, $\S$2.2)}\label{p:ibfunc} There is a categorical isomorphism
	$$ \Fun(\textbf{Ib}^{\mathcal{P}}, \cS) \cong \IbMod(\P)$$
	between $\cS$-enriched functors on $\textbf{Ib}^{\mathcal{P}}$ and infinitesimal $\P$-bimodules.
\end{prop}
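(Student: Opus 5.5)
We will build the isomorphism by writing down both sides explicitly and matching data. An $\cS$-enriched functor $F : \textbf{Ib}^{\P} \lrar \cS$ consists of objects $F(c_1,\cdots,c_n;c)$ for each $C$-sequence, together with action maps
$$\Map_{\textbf{Ib}^{\P}}((c_1,\cdots,c_n;c),(d_1,\cdots,d_m;d)) \otimes F(c_1,\cdots,c_n;c) \lrar F(d_1,\cdots,d_m;d),$$
subject to associativity and unitality. Since the mapping object is a coproduct over $f \in \Hom_{\Fin_*}(\l m\r,\l n\r)$ and $\otimes$ distributes over coproducts (closedness), such an action is the same as a family of maps, one for each $f$,
$$\P(c,\{d_j\}_{f^{-1}(0)};d) \otimes \bigotimes_i \P(\{d_j\}_{f^{-1}(i)};c_i) \otimes F(c_1,\cdots,c_n;c) \lrar F(d_1,\cdots,d_m;d).$$
Here the first factor acts by an infinitesimal left composition, inserting $F$ into the distinguished input of an operation of $\P$ whose other inputs are labelled by $f^{-1}(0)$. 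The remaining factors act by a full right composition, with the ordering and shuffling of the $d_j$ recorded by $f$.

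Given $M\in\IbMod(\P)$, we define $F_M$ levelwise by $M$. The action for $f$ is obtained by first applying the right action $M\circ\P\to M$ on the $i$-th inputs with the blocks $f^{-1}(i)$. We then apply $\circ^{i\ell}$ with $\P(c,\{d_j\}_{f^{-1}(0)};d)$. Finally we apply the permutation that puts the $d_j$ into the order $1,\dots,m$; the $\Sigma_*$-equivariance of $M$ is what makes this well defined.

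Conversely, given $F$, we recover the structure by restriction to particular maps $f$:
\begin{enumerate}
\item the right action comes from maps $f$ with $f^{-1}(0)=\{0\}$, using the unit $\id_c$ in the first slot;
\item the infinitesimal left action comes from injective $f$ that hit all of $\{1,\dots,n\}$, with identities in the slots $\P(c_i;c_i)$;
\item the symmetric group action comes from bijections $f$.
\end{enumerate}

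We then check that the two constructions are mutually inverse, which follows from unitality. We also check that natural transformations correspond to morphisms of infinitesimal bimodules; this is immediate, since both are levelwise maps commuting with the actions.

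The main obstacle is verifying that the composition in $\textbf{Ib}^{\P}$ corresponds exactly to the module axioms. The composite of $f$ and $g$ is indexed by $f\circ g$ in $\Fin_*$, and the structure maps use the composition of $\P$. Functoriality of $F_M$ therefore amounts to three families of identities:
\begin{enumerate}
\item associativity of the right action;
\item associativity of the infinitesimal left action;
\item the compatibility axiom between left and right actions, which handles the interaction of the $f^{-1}(0)$ part of one map with the blocks of the other.
\end{enumerate}
We will reduce this check to generators by factoring each $f$ as a bijection followed by an order-preserving map. We then verify the identities for the elementary actions (right action, infinitesimal left action, permutations) separately. Since $F_M$ is assembled from these elementary actions, functoriality on generators together with $\Sigma_*$-equivariance gives the full statement. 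This is essentially bookkeeping, following \cite[$\S$2.2]{Hoang}.
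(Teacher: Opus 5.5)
Your argument is correct and is essentially the direct unwinding of definitions behind the cited result (\cite{Hoang}, $\S$2.2; the paper gives no separate proof). An action of $\Map^f$ is a right action by the blocks $f^{-1}(i)$, then an infinitesimal left action by an operation whose extra inputs are $f^{-1}(0)\setminus\{0\}$, then a reordering; functoriality encodes exactly the associativity, compatibility and equivariance axioms.

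Two slips to fix. First, in your recovery step (2), the infinitesimal left action comes from the maps $f$ with $|f^{-1}(i)|=1$ for all $1\le i\le n$ and $f^{-1}(0)$ arbitrary. An injective pointed $f$ hitting all of $\{1,\dots,n\}$ is a bijection and only sees unary operations. Second, the identity $F_{M_F}=F$ rests on your factorization of each morphism of $\textbf{Ib}^{\P}$ into elementary ones together with functoriality of $F$, not on unitality alone.
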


\begin{notn}\label{no:csing} When $C$ is a singleton, the objects of $\textbf{Ib}^{\mathcal{P}}$ can be identified with the non-negative integers, denoted $\{\underline{0},\underline{1},\underline{2},\cdots\}$. For each map $f : \l m \r \lrar \l n \r$ in $\Fin_*$, we have
	$$ \Map^f_{\textbf{Ib}^{\mathcal{P}}}(\underline{n},\underline{m}) = \P(k_{f,0})\otimes\P(k_{f,1})\otimes\cdots\otimes\P(k_{f,n}) $$
	in which $k_{f,i} := |f^{-1}(i)|$ for $i=0,\cdots,n$.
\end{notn}

We now describe some examples, with a view toward $\S$$\S$\ref{s:Qdg}-\ref{s:Qprin}, assuming henceforth that $C = \{*\}$.

\begin{notn} We denote by $*_\cS$ the $\cS$-enriched category that has a single object, with mapping object given by $1_\cS$. 
\end{notn}

\begin{example}\label{ex:inffree1}  We will write $$ \underline{0}_![1_\cS] : \textbf{Ib}^{\mathcal{P}} \lrar \cS$$ for the left Kan extension of the functor $*_\cS \x{\{1_\cS\}}{\lrar} \cS$ along the embedding $*_\cS \x{ \{\underline{0}\} }{\lrar} \textbf{Ib}^{\mathcal{P}}$. Clearly, under the identification $\Fun(\textbf{Ib}^{\mathcal{P}}, \cS) \cong \IbMod(\P)$, the functor $ \underline{0}_![1_\cS]$  is identified with  $\Free_\P^{\si}(\mathcal{E}_*)$ (see Remark \ref{r:inffr}).
\end{example}

\begin{example}\label{ex:rightkan} For an object $X\in\cS$, we let $\underline{1}_*[X] : \textbf{Ib}^{\mathcal{P}} \lrar \cS$ denote the right Kan extension of the functor $*_\cS \x{\{X\}}{\lrar} \cS$ along $*_\cS \x{\{\underline{1}\}}{\lrar} \textbf{Ib}^{\mathcal{P}}$. This is given on objects by 
	$$  \underline{1}_*[X](\underline{m}) \cong \Map_{\cS}(\Map_{\textbf{Ib}^{\mathcal{P}}}(\underline{m},\underline{1}), X).$$
	We write $\Hom_{\Fin_*}(\l 1 \r,\l m \r) = \{\rho_i \, | \, 0 \leq i\leq m\}$ where $\rho_i : \l 1 \r\lrar\l m \r$ is defined by  $\rho_i(1)=i$, and write
	$$  \underline{1}_*[X](\underline{m}) \; \cong \; \bigsqcap_{i=0}^m T_m^i $$
in which $T_m^i := \Map_{\cS}(\Map^{\rho_i}_{\textbf{Ib}^{\mathcal{P}}}(\underline{m},\underline{1}), X)$. Finally, notice that $$\Map^{\rho_0}_{\textbf{Ib}^{\P}}(\underline{m},\underline{1}) \cong \P(2)\otimes \P(0)^{\otimes \, m} \;\; \text{and}  \;\; \Map^{\rho_i}_{\textbf{Ib}^{\P}}(\underline{m},\underline{1}) \cong \P(1)^{\otimes \, 2} \otimes \P(0)^{\otimes \, (m-1)} $$
	for every $1\leq i \leq m$.
\end{example}

\begin{example}\label{ex:rightkan1} We also write $\underline{0}_*[X] : \textbf{Ib}^{\mathcal{P}} \lrar \cS$ for the right Kan extension of $*_\cS \x{\{X\}}{\lrar} \cS$ along the embedding $*_\cS \x{\{\underline{0}\}}{\lrar} \textbf{Ib}^{\mathcal{P}}$. This is given on objects by 
	$$  \underline{0}_*[X](\underline{m}) \cong \Map_{\cS}(\Map_{\textbf{Ib}^{\mathcal{P}}}(\underline{m},\underline{0}), X) \cong \Map_{\cS}(\mathcal{P}(1)\otimes\P(0)^{\otimes \, m},X).$$
	In particular, if  $\P(0) \cong \P(1) \cong 1_\cS$, then $\underline{0}_*[X](\underline{m}) \cong X$ for every $m\geq0$, and hence $\underline{0}_*[X]$ may be regarded   as the \textit{constant functor} with value $X$.
\end{example}

\begin{example}\label{ex:tfunctor} Observe that $\textbf{Ib}^{\Com}$ is isomorphic to (the $\cS$-enriched version of) $\Fin_*^{\op}$. Therefore, each object of $\IbMod(\Com)$ can be represented as a functor $\Fin_*^{\op} \lrar \cS$. In the case $\cS = \C(\textbf{k})$ (see $\S$\ref{s:coeinfty}), the \textbf{Pirashvili functor} $t : \Fin_*^{\op} \lrar \C(\textbf{k})$ is defined on objects by $$t(\l m \r) := [\l m \r, \textbf{k}]_*$$ i.e., the $\textbf{k}$-module of based maps $\l m \r \lrar \textbf{k}$, where $\textbf{k}$ has base point $0_\textbf{k}$ (see \cite{Pirash}). As we will see in $\S$\ref{s:cotandg}, up to a shift, this  functor $t$ models the cotangent complex of the \textbf{dg $\E_\infty$-operad}. 
\end{example}

\subsection{Endomorphism constructions}\label{s:endo}

In this subsection, we assume that $(\SS, \otimes, 1_\cS)$ is a \textbf{closed symmetric monoidal category} whose underlying category $\cS$ admits coequalizers. Let $\P$ be a $C$-colored operad in $\cS$ for some set of colors $C$, and let $A$ be a $\P$-algebra. 

We will discuss various \textit{endomorphism constructions} that naturally relate $\P$-algebras (resp. $A$-modules) to $\P$-bimodules (resp. infinitesimal $\P$-bimodules).

For each pair $(M,N) \in \RMod(\P) \times \LMod(\P)$, the \textbf{relative composite product} $M\circ_\P N \in \Coll_C(\cS)$ is determined by the coequalizer  diagram of $C$-collections: $$M\circ \P \circ N \doublerightarrow{}{} M\circ N \lrar M\circ_\P N.$$
In particular, taking relative composite product with $A$ (considered as a left $\P$-module concentrated in level $0$) yields a functor
$$ (-)\circ_\P A : \RMod(\P) \lrar  \cS^{\times C}.$$
Moreover, this functor descends to the functors
\begin{gather*} (-)\circ_\P A : \BMod(\P) \lrar \Alg_{\P}(\cS),  \; \text{and} \\
	(-)\circ_\P A : \IbMod(\P) \lrar \Mod_\P^A.
\end{gather*}
We now construct the corresponding right adjoints to these functors.

\begin{cons} For a pair $(X,Y)$ of objects in $\cS^{\times C}$, the \textbf{endomorphism object} $\End_{X,Y}$ is  the $C$-collection defined by setting
	$$ \End_{X,Y}(c_1,\cdots,c_n;c) := \Map_{\cS}(X(c_1)\otimes\cdots\otimes X(c_n),Y(c)).$$ 
	The right $\Sigma_n$-action arises naturally by permuting the factors $X(c_i)$. In particular, the \textbf{endomorphism operad} associated to $X$ is given by $\End_X:=\End_{X,X}$. For more details, we refer the reader to \cite{Fresse1, Rezk}.
\end{cons}

\begin{prop}\label{p:ends} The constructions $(-)\circ_\P A$ and  $\End_{A,-}$ determine the adjunctions:	
	
\onehalfspacing

	\;  \textup{(i)} $(-)\circ_\P A : \adjunction*{}{\RMod(\P)}{\cS^{\times C}}{} : \End_{A,-} , $
	
	\onehalfspacing	
	
	\;  \textup{(ii)} $(-)\circ_\P A : \adjunction*{}{\BMod(\P)}{\Alg_\P(\cS)}{} : \End_{A,-} , \;\; \text{and} $	
	
	\onehalfspacing
	
	\;  \textup{(iii)} $	(-)\circ_\P A : \adjunction*{}{\IbMod(\P)}{\Mod_\P^A}{} : \End_{A,-}.$	
	\begin{proof} The first two adjunctions were established in \cite{Fresse1, Rezk}, while the other is included in \cite{Hoang1}.
	\end{proof}
\end{prop}

\begin{rem}\label{r:prekey} Due to the identification $\P\circ_\P A \cong A$, we may combine the above adjunctions into a commutative diagram of adjunctions
	\begin{equation}\label{eq:prekey}
		\begin{tikzcd}[row sep=3.5em, column sep =3.5em]
			\RMod(\P)_{\P/}	 \arrow[r, shift left=1.5] \arrow[d, shift right=1.5, "(-)\circ_\P A"'] & \IbMod(\P)_{\P/} \arrow[l, shift left=1.5, "\perp"'] \arrow[d, shift left=1.5, "(-)\circ_\P A"] \arrow[r, shift left=1.5]  & \BMod(\P)_{\P/}  \arrow[d, shift left=1.5, "(-)\circ_\P A"] \arrow[l, shift left=1.5, "\perp"']   \\
			(\cS^{\times C})_{A/} \arrow[r, shift right=1.5] \arrow[u, shift right=1.5, "\dashv", "\End_{A,-}"'] & (\Mod_\P^A)_{A/}  \arrow[l, shift right=1.5, "\downvdash"] \arrow[u, shift left=1.5, "\vdash"', "\End_{A,-}"] \arrow[r, shift right=1.5] & (\Alg_\P(\cS))_{A/}  \arrow[u, shift left=1.5, "\vdash"', "\End_{A,-}"] \arrow[l, shift right=1.5, "\downvdash"]
		\end{tikzcd}
	\end{equation}
	such that the horizontal pairs are given by the adjunctions of induction-restriction functors (see Remarks \ref{r:modAalg} and \ref{r:infbibi}).
\end{rem}

\begin{rem}\label{r:freeAmod} The \textit{free $A$-module functor}, denoted $ \Free_A : \cS^{\times C} \lrar \Mod_\P^A $, is described as follows. Let $X\in\cS^{\times C}$ be given, which we also regard as a $C$-collection concentrated in level $0$. Combining the adjunction of Proposition \ref{p:ends}(iii) with Remark \ref{r:inffr}, we obtain a chain of isomorphisms in $\Mod_\P^A$:
	$$ \Free_A(X) \cong   \Free_\P^{\si}(X)\circ_\P A  \cong (\mathcal{P}\circ_{(1)}X) \circ_\P A.$$
Moreover, when $C = \{*\}$ and $X=1_\cS$, we obtain $$\Free_A(1_\cS) \cong (\P\circ_{(1)}\mathcal{E}_*)\circ_\P A.$$	
This provides a model for the \textbf{universal enveloping algebra} of $A$. (See also \cite[$\S$5.2]{Hoang1} for more details.)
\end{rem}

\subsection{Tangent categories and Quillen cohomology}\label{s:notcotan}

We briefly outline the procedures for establishing the \textit{cotangent complex formalism} and \textit{Quillen cohomology}. For further details, we refer the reader to \cite{Hoang, Hoang1, YonatanBundle, YonatanCotangent}. 

Let $\textbf{M}$ be a \textbf{semi-model category} (see \cite{Spitzweck, Fresse1}). By definition, $\textbf{M}$ is \textbf{weakly pointed} if it contains a  \textit{weak zero object} $0$ that is both homotopy initial and terminal.

\begin{dfn}\label{d:stable} We say that $\textbf{M}$ is \textbf{stable} if the following equivalent conditions hold:
\begin{enumerate}
	
	\item The \textbf{underlying} $\infty$-\textbf{category} $\textbf{M}_\infty$ is stable in the sense of \cite{Lurieha}.
	
	\item $\textbf{M}$ is weakly pointed, and a square in $\textbf{M}$ is homotopy coCartesian if and only if it is homotopy Cartesian.
	
	\item $\textbf{M}$ is weakly pointed, and the adjunction $$\Sigma:  \adjunction*{}{\Ho(\textbf{M})}{\Ho(\textbf{M})}{} :\Omega$$ of \textit{suspension} and  \textit{desuspension} functors is an adjoint equivalence.
		\end{enumerate}
\end{dfn}

Next, we assume  that $\textbf{M}$ is a weakly pointed semi-model category. For an ($\mathbb{N}\times\mathbb{N} $)-diagram $X\in\textbf{M}^{\mathbb{N}\times\mathbb{N}}$, we will refer to squares of the form
$$ \xymatrix{
	X_{n,n} \ar[r]\ar[d] & X_{n,n+1} \ar[d] \\
	X_{n+1,n} \ar[r] & X_{n+1,n+1} \\
}$$
as \textbf{diagonal squares}.

\begin{dfn}\label{dfnspectrumobj}   An  ($\mathbb{N}\times\mathbb{N} $)-diagram in $\textbf{M}$ is called
	
\begin{enumerate}[(1)]	
	
	\item a \textbf{prespectrum} if all its off-diagonal entries are weak zero objects in $\textbf{M}$,

	\item an \textbf{$\Omega$-spectrum} if it is a prespectrum and all its diagonal squares are homotopy Cartesian; and

	\item a \textbf{suspension spectrum} if it is a prespectrum and all its diagonal squares are homotopy coCartesian.
\end{enumerate}
\end{dfn}

We denote by $\textbf{M}^{\mathbb{N}\times\mathbb{N}}_{\proj}$  the \textit{projective semi-model category} of ($\mathbb{N}\times\mathbb{N} $)-diagrams in $\textbf{M}$.

\begin{dfn} A map $f : X\lrar Y$ in $\textbf{M}^{\mathbb{N}\times\mathbb{N}}$ is called a \textbf{stable equivalence} if for every $\Omega$-spectrum $Z$ the induced map between derived mapping spaces
	$$ \Map^{\der}_{\textbf{M}^{\mathbb{N}\times\mathbb{N}}_{\proj}}(Y,Z) \lrar  \Map^{\der}_{\textbf{M}^{\mathbb{N}\times\mathbb{N}}_{\proj}}(X,Z) $$
	is a weak equivalence. 
\end{dfn}

The procedure for constructing \textit{stable approximations} relies on the following definition.

\begin{dfn}\label{d:stabilization} Let $\textbf{M}$ be a weakly pointed combinatorial semi-model category in which the domains of generating cofibrations are cofibrant. The \textbf{stabilization} $\Sp(\textbf{M})$ is the left Bousfield localization of $\textbf{M}^{\mathbb{N}\times\mathbb{N}}_{\proj}$ with $\Omega$-spectra as the local objects. More explicitly, $\Sp(\textbf{M})$ is a cofibrantly generated semi-model category such that
	
	\begin{enumerate}
	\item weak equivalences are precisely the stable equivalences, and

	\item (generating) cofibrations are the same as those of $\textbf{M}^{\mathbb{N}\times\mathbb{N}}_{\proj}$.
\end{enumerate}

	In particular, $\Sp(\textbf{M})$ is a stable semi-model category in which the fibrant objects are precisely the levelwise fibrant $\Omega$-spectra.
\end{dfn}

\begin{rem} There is a Quillen adjunction $$\Sigma^{\infty} : \adjunction*{}{\textbf{M}}{\Sp(\textbf{M})}{} : \Omega^{\infty}$$ in which $\Omega^{\infty}(X)=X_{0,0}$ and $\Sigma^{\infty}(A)$ is the constant diagram with value $A \in \textbf{M}$.
\end{rem}

In what follows, we assume that $\textbf{M}$ satisfies the assumptions of Definition \ref{d:stabilization}, and let $A \in \Ob(\textbf{M})$ be a fixed object.

\begin{notn} We write $\textbf{M}_{A//A}$ for the \textbf{category of objects over and under} $A$. Concretely, an object of $\textbf{M}_{A//A}$ is a sequence $A \x{f}{\lrar} B \x{g}{\lrar} A$ of maps in $\textbf{M}$ such that $g\circ f = \Id_A$.
\end{notn}

We regard $\textbf{M}_{A//A}$ as a (strictly) pointed category, with the zero object given by the sequence of identity maps on $A$. It inherits a semi-model structure transferred from that on $\textbf{M}$.

\begin{dfn}\label{d:tangentcat}  The \textbf{tangent category} to $\textbf{M}$ at $A$ is the stabilization of $\textbf{M}_{A//A}$:
	$$ \mathcal{T}_A\textbf{M}:=\Sp(\textbf{M}_{A//A}).$$ 	
\end{dfn}

Let us denote by $ \Sigma^{\infty}_+ : \adjunction*{}{\textbf{M}_{/A}}{\mathcal{T}_A\textbf{M}}{} : \Omega^{\infty}_+ $ the Quillen adjunction in which, for  $B\in\textbf{M}_{/A}$ and $X \in \mathcal{T}_A\textbf{M}$, we have 
\begin{gather*} \Sigma^{\infty}_+(B) = \Sigma^{\infty}(A\longrightarrow A\sqcup B \longrightarrow A), \; \text{and}  \\
	\Omega^{\infty}_+(X) = (X_{0,0} \lrar A).
\end{gather*}

\begin{dfn}\label{d:cotan} The \textbf{cotangent complex} of $A$ is the derived suspension spectrum of $A\in\textbf{M}_{/A}$:
	$$\rL_A:=\mathbb{L}\Sigma^{\infty}_+(A) \in  \mathcal{T}_A\textbf{M}.$$
More generally, for a map $f : A \lrar B$ in $\textbf{M}$, the \textbf{relative cotangent complex} of $f$ is 
$$\rL_{B/A}:=\hocofib \, [\,\mathbb{L}\Sigma^{\infty}_+(f) \lrar \rL_B\,],$$
i.e., the homotopy cofiber of the map $\mathbb{L}\Sigma^{\infty}_+(f) \lrar \rL_B$ in $\mathcal{T}_B\textbf{M}$. 
\end{dfn}

\begin{rem}\label{r:relcotan} The map $\mathbb{L}\Sigma^{\infty}_+(f) \lrar \rL_B$ is weakly equivalent to the canonical map $f_!(\rL_A) \lrar \rL_B$, where $f_!$ refers to the induced left Quillen functor $f_! : \mathcal{T}_A\textbf{M} \lrar \mathcal{T}_B\textbf{M}$ (see \cite[$\S$2.5]{Hoang} for more details).
\end{rem}

We are now ready to state the main definition.

\begin{dfn}\label{d:Qcohom} Suppose that $A \in \textbf{M}$ is a fibrant object. The \textbf{Quillen cohomology} of $A$ with coefficients in an object $M\in\mathcal{T}_A\textbf{M}$ is the space
	$$ \HHQ^\star(A ; M) :=  \Map^{\der}_{\mathcal{T}_A\textbf{M}} (\rL_A , M).$$
Furthermore, the \textbf{$n$-th Quillen cohomology group} is
	$$ \HHQ^n(A  ; M) := \pi_0 \Map^{\der}_{\mathcal{T}_A\textbf{M}} (\rL_A , M[n])$$
	where $M[n]$ signifies the $n$-suspension of $M$ in $\mathcal{T}_A\textbf{M}$.
\end{dfn}

\begin{rem}\label{r:modelQ} Let $A \in \textbf{M}$ be fibrant. By the Quillen adjunction $\Sigma^{\infty}_+ : \adjunction*{}{\textbf{M}_{/A}}{\mathcal{T}_A\textbf{M}}{} : \Omega^{\infty}_+$, for every $M\in\mathcal{T}_A\textbf{M}$, there is a canonical weak equivalence
	$$ \Map^{\der}_{\textbf{M}_{/A}} (A , \RR\Omega^\infty_+ M) \simeq \Map^{\der}_{\mathcal{T}_A\textbf{M}} (\rL_A , M) \overset{\defi}{=} \HHQ^\star(A ; M).$$
\end{rem}

\subsection{Some base categories and operadic model structures}\label{s:bases}

We will assume that $(\cS, \otimes, 1_\cS)$ is a \textbf{symmetric monoidal model category} such that the model structure is cofibrantly generated and the monoidal unit $1_\cS$ is cofibrant.

 Let $\P$ be a $C$-colored operad in $\cS$ for some set of colors $C$, and let $A$ be a $\P$-algebra. We say that $\P$ is $\Sigma$-\textbf{cofibrant} if it is cofibrant in $\Coll_C(\cS)$ with respect to the (\textit{projective}) \textit{model structure} transferred from $\cS$.

\begin{define}\label{d:trans} The \textbf{transferred model (or semi-model) structure} on $\Alg_{\P}(\cS)$, if it exists, is the one transferred along the free-forgetful adjunction $\adjunction*{}{\cS^{\times C}}{\Alg_{\P}(\cS)}{}$. More concretely, it is a cofibrantly generated (semi-)model category in which the weak equivalences (resp. fibrations) are precisely the \textit{levelwise weak equivalences} (resp. \textit{levelwise fibrations}).
\end{define}

\begin{rem}\label{r:AA} We denote by 
	\begin{equation}\label{eq:AA}
		\mathbb{A} := \{\Op_C(\SS), \, \LMod(\P), \, \RMod(\P), \, \BMod(\P), \, \IbMod(\P), \, \Alg_{\P}(\cS) , \, \Mod_{\P}^A \}.
	\end{equation}
Each category in $\mathbb{A}$ can be represented as the category of algebras over an operad or $\cS$-valued functors on an enriched category (see \cite{Hoang} for a detailed discussion). According to \cite{Fresse1, Spitzweck},  the transferred semi-model structure on $\Op_C(\SS)$ exists. The same statement holds for the categories $\LMod(\P), \BMod(\P)$ and $\Alg_{\P}(\cS)$, provided that $\P$ is $\Sigma$-cofibrant. Meanwhile, $\RMod(\P)$ and $\IbMod(\P)$ admit a transferred (full) model structure when $\P$ is levelwise cofibrant. Moreover, the transferred model structure on  $\Mod_{\P}^A$ exists, as long as $\P$ is $\Sigma$-cofibrant and $A \in \Alg_{\P}(\cS)$ is cofibrant.          
\end{rem}

Next, we discuss the homotopy theory of $\cS$-enriched operads.
\begin{define}\label{d:levweak} A map $f : \P \lrar \Q$ in $\Op(\cS)$ is called a \textbf{levelwise weak equivalence} if for every sequence $(c_1,\cdots,c_n;c)$ of colors of $\P$, the induced  map $$\P(c_1,\cdots,c_n;c) \lrar \Q(f(c_1),\cdots,f(c_n);f(c))$$
	is a weak equivalence in $\cS$. Moreover, \textit{levelwise (co)fibrations} and \textit{levelwise trivial (co)fibrations} are defined analogously.
\end{define}

The \textbf{homotopy category} $\Ho(\P)$ is the (ordinary) category whose objects are the colors of $\P$,  and whose morphisms are given by
$$ \Hom_{\Ho(\P)}(c,d) := \Hom_{\Ho(\cS)}(1_\cS, \P(c;d)).$$

\begin{define}\label{d:dkequi} 
	\begin{enumerate}[(1)]
		
	\item A map $f : \P \lrar \Q$ in $\Op(\cS)$ is a \textbf{Dwyer-Kan equivalence} if it is a levelwise weak equivalence, and the induced functor $$\Ho(f) : \Ho(\P) \lrar \Ho(\Q)$$ between homotopy categories is essentially surjective.
	
	\item A map in $\Op(\cS)$ is a \textbf{Dwyer-Kan trivial fibration} if it is a levelwise trivial fibration and surjective on colors.
	 	\end{enumerate}
\end{define}

\begin{rem}\label{r:dk} In \cite{Caviglia}, Caviglia introduced the notion of the \textbf{Dwyer-Kan model structure} on $\Op(\cS)$, whose weak equivalences (resp. trivial fibrations) are precisely the Dwyer-Kan equivalences (resp. Dwyer-Kan trivial fibrations).  
\end{rem}

\begin{example}\label{ex:basecat} Here we introduce some base categories that will serve as the setting for our work. Let $\textbf{k}$ be a commutative ring.
	\begin{enumerate}[(1)]
		\item We endow the category $\Set_\Delta$ of \textbf{simplicial sets} with the Cartesian monoidal structure and the \textit{classical (Kan-Quillen) model structure}. 
		
		\item We denote by $\sMod(\textbf{k})$ the category of \textbf{simplicial} $\textbf{k}$-\textbf{modules}, equipped with the (degreewise) tensor product over $\textbf{k}$ and the model structure transferred from the classical one on $\Set_\Delta$.
		
		\item We let $\C(\textbf{k})$ (resp. $\C_{\geqslant0}(\textbf{k})$)  denote the category of \textbf{dg $\textbf{k}$-modules} (resp. \textbf{connective dg $\textbf{k}$-modules}, i.e., those  concentrated in non-negative degrees). Both categories are equipped with the usual tensor product of chain complexes and the \textit{projective model structure}, in which the weak equivalences are precisely the \textit{quasi-isomorphisms}, and the fibrations in $\C(\textbf{k})$ (resp. $\C_{\geqslant 0}(\textbf{k})$) are the maps that are degreewise surjective (resp. surjective in each positive degree).   
		
		\item Let $R$ be a commutative monoid in $\C_{\geqslant 0}(\textbf{k})$. We are also interested in the category $\Mod_R$ of $R$-\textbf{modules in} $\C_{\geqslant 0}(\textbf{k})$. This category is equipped with the usual tensor product $-\otimes_R-$ and the projective model structure transferred from that on $\C_{\geqslant 0}(\textbf{k})$.  
	\end{enumerate}
\end{example}

\begin{rem}\label{r:modelsimp} For $\cS = \Set_\Delta$ or $\sMod(\textbf{k})$, each category in  $\mathbb{A}$ \eqref{eq:AA} admits a transferred (full) model structure for any $\P\in\Op_C(\SS)$ and $A\in\Alg_{\P}(\cS)$. This follows, for instance, from the results of \cite{Pavlov}, together  with Remark \ref{r:AA}. Moreover, in this setting, the Dwyer-Kan model structure on $\Op(\cS)$ exists as well (see \cite{Caviglia}).
\end{rem}

\begin{rem}\label{r:modelchain} For $\cS$ as in Example \ref{ex:basecat} (3-4),  the transferred homotopy theory on each category in $\mathbb{A}$ exists (at least) as a semi-model category, under suitable assumptions as described in Remark \ref{r:AA}. Furthermore, there exists a cofibrantly generated semi-model structure on $\Op(\cS)$ whose weak equivalences and generating (trivial) cofibrations are those described in \cite[Theorem 8.6]{Caviglia}.  We refer to this as the \textit{Dwyer-Kan semi-model structure on} $\Op(\cS)$. Additionally, all of these exist as (full) model categories, provided that $\textbf{k}$ contains the field $\QQ$ of rational numbers.
\end{rem}

\subsection{More notations}\label{s:other}

 Let $(\cS, \otimes, 1_\cS)$ be a suitable symmetric monoidal model category. Let $\P$ be a $C$-colored operad in $\cS$, and let $A$ be a $\P$-algebra. In addition to those already introduced, we will use the following notations.

\begin{enumerate}

	\item For an object $X \in \cS$, we  denote by $|X| := \Map^{\h}_\cS(1_\cS,X)$ the derived mapping space from $1_\cS$ to $X$, and refer to it the \textbf{underlying space of $X$}.
	
	\item For an integer $n\geq0$, we  denote by $\sS^n := \Sigma^n(1_{\cS} \x{\h}{\sqcup} 1_{\cS})$, i.e., the $n$-suspension of the homotopy coproduct $1_{\cS} \x{\h}{\sqcup} 1_{\cS}\in\cS_{1_{\cS}//1_{\cS}}$, and refer to $\sS^n$ as the \textbf{(pointed) $n$-sphere in $\cS$}.

	\item We will write $\P^{\sB}$ and $\P^{\si}$ to denote $\P$ itself regarded as an object of $\BMod(\P)$ and $\IbMod(\P)$, respectively. 

 \item We will use the following notations for various cotangent complexes related to $\P$ and $A$:

\smallskip
	
 $\bullet$ $\rL_\P \in \T_\P\Op(\cS)$: the cotangent complex of $\P$ considered as an object of $\Op(\cS)$.
 
 \smallskip

$\bullet$  $\rL^{\red}_\P \in \T_\P\Op_C(\cS)$: the cotangent complex of $\P$ considered as an object of $\Op_C(\cS)$.

\smallskip

$\bullet$ $\rL_A \in \T_A\Alg_\P(\cS)$: the cotangent complex of $A$ considered as a $\P$-algebra.

\item We denote by
$$ \HHQ^\star(\P; -) \;\; \text{and} \;\; \HHQ^\star_{\red}(\P; -)$$
the Quillen cohomology of $\P$ regarded as an object in $\Op(\cS)$ and $\Op_C(\cS)$, respectively. The former is referred to as the \textbf{(proper) Quillen cohomology of}  $\P$ (classified by $\rL_\P$), and the latter as its \textbf{reduced Quillen cohomology} (classified by the \textit{reduced cotangent complex} $\rL^{\red}_\P$).

\smallskip

Moreover, we denote by $\HHQ^\star(A; -)$ the \textbf{Quillen cohomology of} $A$ as a $\P$-algebra.

\end{enumerate}

\section{Quillen cohomology of dg operads}\label{s:Qdg}

Let $\textbf{k}$ be a fixed commutative ring. In this section, we work over the base category $\cS = \C(\textbf{k})$ or $\C_{\geq 0}(\textbf{k})$ (cf. Example~\ref{ex:basecat}). The relevant operadic (semi-)model structures are as described in $\S$\ref{s:bases}.

   We will first give an explicit description of the cotangent complex of dg operads. We then show how the cotangent complexes of algebras over a dg operad are controlled by the cotangent complex of the operad itself. In particular, we also consider the case of the \textbf{dg $\E_\infty$-operad}.

\subsection{Cotangent complex of dg operads}\label{s:cotandg}

We begin by recalling the following fundamental theorem. 

\begin{thm}\textup{(\cite{Hoang})}\label{t:keylem} Suppose given a base category $\cS$ and a $C$-colored operad $\P \in \Op_C(\cS)$ for some set of colors $C$. 
\begin{enumerate}[(1)]
	\item Suppose that $\cS$ is \textbf{stably sufficient} (cf. \cite[$\S$3]{Hoang}) and that  $\P$ is $\Sigma$-cofibrant. There is a chain of natural Quillen equivalences
	\begin{equation}\label{eq:optan}
		\adjunction*{\simeq}{\IbMod(\P)}{\T_{\P^{\si}}\IbMod(\P)}{} \adjunction*{\simeq}{}{\T_{\P^{\sB}}\BMod(\P)}{} \adjunction*{\simeq}{}{\T_\P\Op_C(\cS)}{} \adjunction*{\simeq}{}{\T_\P\Op(\cS)}{}
	\end{equation}
	(see $\S$\ref{s:other} for notations). Here, the first adjunction is given by the composed Quillen equivalence
	$$ \adjunction*{(-) \oplus \P^{\si}}{\IbMod(\P)}{\IbMod(\P)_{\P^{\si}//\P^{\si}}}{\ker} \adjunction*{\Sigma^{\infty}}{}{\T_{\P^{\si}}\IbMod(\P)}{\Omega^{\infty}},$$
	while the others are induced by the adjunctions of induction-restriction functors (cf. Remarks \ref{r:opCop}, \ref{r:biopC} and \ref{r:infbibi}).
	
	\item  Suppose further that $\cS$ is \textbf{stably abundant} (cf. \cite[$\S$3]{Hoang}), and that $\P \in \Op_C(\cS)$ is cofibrant and is \textbf{good} in the sense of \cite[$\S$5.1]{Hoang}. Under the equivalence $\T_\P\Op(\cS) \simeq \IbMod(\P)$, the cotangent complex $\rL_\P \in \T_\P\Op(\cS)$ is identified with  $\ovl{\rL}_\P[-1] \in \IbMod(\P)$ in which $\ovl{\rL}_\P$ is given at each level by 
	\begin{equation}\label{eq:keyeq}
		\ovl{\rL}_\P(c_1,\cdots,c_m;c) \simeq \P(c_1,\cdots,c_m;c)\otimes \hocolim_n\Omega^{n} [\, (\sS^{n})^{\otimes \, m}  \times_{\textbf{k}}^{\h}  0\,].
	\end{equation}
	Here $\sS^{n}$ refers to the $n$-sphere in $\cS$ (see $\S$\ref{s:other}(ii)).  
\end{enumerate}
\end{thm}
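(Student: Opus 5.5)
The plan treats the two parts separately. Part (1) comes from comparing free constructions and their ``linear parts'' after stabilization. Part (2) comes from an explicit levelwise model of the iterated suspensions of $\P\sqcup\P$ in $\Op(\cS)_{\P//\P}$, which we then transport along the chain \eqref{eq:optan}.

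\emph{Part (1).} We go through the chain \eqref{eq:optan} from left to right.
\begin{enumerate}[(a)]
\item \emph{The first adjunction.} Since $\cS$ is stable (or stably sufficient), $\IbMod(\P)$ is already stable. The square-zero functor $M\mapsto M\oplus\P^{\si}$ is a Quillen equivalence onto $\IbMod(\P)_{\P^{\si}//\P^{\si}}$, with inverse $\ker$, because kernels and direct sums are inverse in an additive stable setting. Stabilizing an already stable semi-model category changes nothing up to Quillen equivalence, which settles this step.
\item \emph{$\BMod(\P)$ versus $\IbMod(\P)$.} It suffices to check that the derived unit and counit of the induced adjunction on $\T$ are stable equivalences on generators. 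The free $\P$-bimodule under $\P$ on $\P^{\si}\oplus X$ splits, up to filtration, by the number of occurrences of $X$. The weight-one piece is exactly the free infinitesimal bimodule $\P\circ_{(1)}(X\circ\P)$ (Remark~\ref{r:inffr}). The weight $\geq2$ pieces involve $X^{\otimes k}$ with $k\geq2$. After replacing $X$ by $\Sigma^nX$ and applying $\Omega^n$, these become $\Omega^n\big((\sS^n)^{\otimes k}\otimes\cdots\big)$, which vanish in the colimit. This vanishing is precisely the stably sufficient hypothesis.
\item \emph{$\Op_C(\cS)$ versus $\BMod(\P)$.} The same argument applies. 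Relative free operads under $\P$ on a bimodule $M$ are filtered by word length in $M$, the linear term is $M$ itself, and $\Sigma$-cofibrancy makes the filtration homotopically well behaved.
\item \emph{$\Op(\cS)$ versus $\Op_C(\cS)$.} An object of $\Op(\cS)_{\P//\P}$ has a color set $C\to D\to C$. We show that in stable objects the extra colors are homotopically trivial. The loop object of a color set is discrete, so after stabilization every $\Omega$-spectrum is Dwyer--Kan equivalent to one with color set $C$. The adjunction of Remark~\ref{r:opCop} then induces an equivalence on tangent categories.
\end{enumerate}

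\emph{Part (2).} By Definition~\ref{d:cotan}, $\rL_\P$ is the suspension spectrum of $\P\to\P\sqcup\P\to\P$ in $\Op(\cS)_{\P//\P}$. The key step is to produce an explicit model for the iterated suspension $\Sigma^n(\P\sqcup\P)=\P\sqcup^{\h}_{\Sigma^{n-1}(\P\sqcup\P)}\P$.

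\begin{enumerate}[(a)]
\item \emph{Levelwise model.} The coproduct $\P\sqcup\P$ has color set $C\sqcup C$, and each suspension re-identifies the two copies of the colors. I will argue that, for cofibrant and good $\P$, the $n$-th suspension is modelled by an operad with color set $C$ whose operations are
\[
\P(c_1,\cdots,c_m;c)\otimes(\sS^n)^{\otimes m},
\]
augmented by $\sS^n\to1_\cS$. Heuristically, each input color carries an $n$-sphere's worth of ``identity arrows''.
\item \emph{Verification.} First check the formula for $n=1$ by computing the pushout $\P\sqcup_{\P\sqcup\P}\P$ explicitly, gluing the two color copies along an interval-type object. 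Then induct on $n$, using the fact that tensoring with $\sS^n$ commutes with the operadic structure levelwise. Goodness in the sense of \cite[$\S$5.1]{Hoang} is exactly what guarantees that these strict pushouts are homotopy pushouts in the Dwyer--Kan semi-model structure.
\item \emph{Transport.} Pass this model through the chain \eqref{eq:optan}. The functor $\ker$ replaces $(\sS^n)^{\otimes m}$ by its homotopy fiber over $0$ of the augmentation. Forming the $\Omega$-spectrum then gives $\hocolim_n\Omega^n[(\sS^n)^{\otimes m}\times^{\h}_{\textbf{k}}0]$, and the one-step shift from $\Sigma^\infty$ versus $\Sigma^\infty_+$ accounts for the $[-1]$. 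This yields \eqref{eq:keyeq}.
\item \emph{Consistency check.} In $\C(\textbf{k})$ we have $\sS^n=\textbf{k}\oplus\textbf{k}[n]$. The fiber of the augmentation consists of the summands of $(\textbf{k}\oplus\textbf{k}[n])^{\otimes m}$ containing at least one factor $\textbf{k}[n]$. After applying $\Omega^n$, only the $m$ linear summands survive the colimit, which recovers $\P(c_1,\cdots,c_m;c)^{\oplus m}$ as in Theorem~\ref{t:main1}.
\end{enumerate}

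\emph{Main obstacle.} I expect the hardest step to be Part (2)(a)--(b): computing homotopy pushouts in $\Op(\cS)$ that change colors, and proving that the levelwise formula with $(\sS^n)^{\otimes m}$ is correct up to Dwyer--Kan equivalence. My first attempt will be to resolve $\P\sqcup\P\to\P$ as a relative cell complex that adjoins an invertible unary arrow between the two copies of each color. I will then use goodness to compare the resulting operad levelwise with $\P\otimes(\text{cylinder})^{\otimes m}$.
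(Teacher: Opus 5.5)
This statement is not proved in the paper; it is quoted from \cite{Hoang}. Your outline therefore has to stand on its own.

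\textbf{Part (1).} The strategy is reasonable: stability, then Harpaz--Nuiten--Prasma-style linearization of free constructions, then a colour argument. Two justifications need correcting.

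In (a), ``stably sufficient'' does not make $\IbMod(\P)$ stable. The step works because $\cS=\C(\textbf{k})$ is stable. It fails for a non-stable base such as $\C_{\geq 0}(\textbf{k})$.

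In (d), it is false that $\Omega$-spectra in $\Op(\cS)_{\P//\P}$ are levelwise Dwyer--Kan equivalent to ones with colour set $C$. Up to equivalence, the colours of $\P\times^{\h}_Y\P$ include one for each automorphism of $c$ in $\Ho(Y)$ lying over $\id_c$ (modulo conjugation). For example, take $\P=\I_*$ and $Y=\I_*\ltimes M[1]$ with $M$ concentrated in arity one and $M(1)=\textbf{k}[-1]$. Then $Y(1)=\textbf{k}[\epsilon]/(\epsilon^2)$, and one gets pairwise inequivalent colours indexed by $\sH_{-1}(M(1))=\textbf{k}$. What does hold is that $\P\times^{\h}_Y\P$ depends only on the full suboperad of $Y$ on the image of $C$. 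Hence restriction of colours commutes with loops, and that is what makes the derived unit and counit equivalences.

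\textbf{The genuine gap is in Part (2).} Your unstable model is off by one suspension, as your own $n=1$ check would show.
\begin{itemize}
\item The pushout $\P\sqcup^{\h}_{\P\sqcup\P}\P$ identifies the two copies of each colour $c$ along two invertible arrows. Their composite $t_c$ is a freely adjoined invertible unary operation with $t_c\circ\mu=\mu\circ(t_{c_1},\ldots,t_{c_m})$.
\item The result has operations $\P(c_1,\ldots,c_m;c)\otimes\textbf{k}[\mathbb{Z}]^{\otimes m}$, i.e.\ chains on $(\Omega S^1)^m$. It is not $\P(c_1,\ldots,c_m;c)\otimes(\sS^1)^{\otimes m}$. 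For $\P=\I_*$ this is the familiar $\textbf{k}\sqcup^{\h}_{\textbf{k}\sqcup\textbf{k}}\textbf{k}\simeq\textbf{k}[t^{\pm1}]$.
\item Later suspensions are operadic pushouts, not levelwise tensoring. They are governed by $C_*(\Omega S^n;\textbf{k})\simeq\bigoplus_{k\geq 0}\textbf{k}[k(n-1)]$, a Hopf algebra, which is what $\P\otimes K^{\otimes\bullet}$ needs in order to be an operad.
\item The weight-one part of $C_*(\Omega S^n;\textbf{k})$ is $\textbf{k}[n-1]$, the reduced part of $\sS^{n-1}$. This index shift is exactly where the $[-1]$ comes from.
\end{itemize}

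Your bookkeeping cannot produce that shift. With your model, the zeroth space of the $\Omega$-spectrum replacement of $\Sigma^\infty(\P\sqcup\P)$ is $\hocolim_n\Omega^n\ker\bigl(\P\otimes(\sS^n)^{\otimes m}\bigr)=\ovl{\rL}_\P$, with no shift. There is also no separate ``$\Sigma^\infty$ versus $\Sigma^\infty_+$'' shift to appeal to, since $\Sigma^\infty_+\P=\Sigma^\infty(\P\sqcup\P)$ by definition. So Part (2) as written yields $\rL_\P\simeq\ovl{\rL}_\P$, contradicting the statement.

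A repair:
\begin{enumerate}
\item Compute only $\Sigma(\P\sqcup\P)\simeq\P\otimes\textbf{k}[\mathbb{Z}]^{\otimes\bullet}$, which already has colour set $C$.
\item Use $\Sigma^\infty(\P\sqcup\P)\simeq\Sigma^{-1}\Sigma^\infty\Sigma(\P\sqcup\P)$.
\item Apply the fixed-colour linearization from Part (1). Differentiating $t$ at each input gives $\P(c_1,\ldots,c_m;c)^{\oplus m}$, with precisely the right action described in Remark~\ref{r:lp}.
\end{enumerate}
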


Now let $\P \in \Op_C(\C(\textbf{k}))$ be a $C$-colored operad in $\C(\textbf{k})$. We outline the steps to deduce Theorem \ref{t:cotan} below.

\begin{define}\label{d:ade}
	An operad in $\C(\textbf{k})$ is said to be \textbf{adequate} if it is $\Sigma$-cofibrant and \textbf{connective}, in the sense that it comes from an operad in $\C_{\geq 0}(\textbf{k})$.
\end{define}

\begin{rem}\label{r:cotan} The statement of Theorem \ref{t:keylem}(2) already holds for adequate dg operads. Indeed, note first that the category $\C(\textbf{k})$ is stably abundant. Suppose that $\P$ is adequate, and consider a cofibrant resolution $\Q \lrarsimeq \P$ for $\P\in\Op_C(\C_{\geqslant0}(\textbf{k}))$. As discussed in \cite[$\S$5.1]{Hoang}, the operad $\Q$ is automatically \textit{good}. Moreover, we have a commutative square of Quillen equivalences	 
	$$\begin{tikzcd}[row sep=3.5em, column sep =3.5em]
		\IbMod(\mathcal{Q}) \arrow[r, shift left=1.5, "\simeq"] \arrow[d, shift right=1.5, "\simeq"'] & \IbMod(\mathcal{P}) \arrow[l, shift left=1.5, "\perp"'] \arrow[d, shift left=1.5, "\simeq"]  & \\
		\T_\Q\Op(\C(\textbf{k})) \arrow[r, shift right=1.5] \arrow[u, shift right=1.5, "\dashv"] & \T_\P\Op(\C(\textbf{k})).  \arrow[l, shift right=1.5, "\downvdash", "\simeq"'] \arrow[u, shift left=1.5, "\vdash"'] &
	\end{tikzcd}$$	
We deduce the claim by observing that the bottom Quillen equivalence identifies $\rL_\Q$ with $\rL_\P$, while the top one identifies $\ovl{\rL}_\Q$ with $\ovl{\rL}_\P$. 
\end{rem}

For a more concrete description of $\ovl{\rL}_\P$, we carry out an easy computation as follows.
\begin{lem}\label{l:cotan} For every $C$-sequence $(c_1,\cdots,c_m;c)$, there is a weak equivalence
	$$ \ovl{\rL}_\P(c_1,\cdots,c_m;c) \simeq \P(c_1,\cdots,c_m;c)^{\oplus \, m}.$$
\begin{proof}  First, we have that
	$$ (\sS^{n})^{\otimes \, m} \; \simeq \; (\textbf{k} \oplus \textbf{k}[n])^{\otimes \, m} \; \cong \; \bigoplus_{i=0}^m \binom{m}{i} \, \textbf{k}[in] ,$$
	from which we further obtain  that
	$$ \Omega^{n} [\, (\sS^{n})^{\otimes m}  \times_{\textbf{k}}^{\h}  0\,] \; \simeq \; \Omega^{n} \left( \bigoplus_{i=1}^m \binom{m}{i} \, \textbf{k}[in] \right) \; \simeq \; \bigoplus_{i=1}^m \binom{m}{i} \, \textbf{k}[(i-1)n].$$
	Thus we may write
	$$ \hocolim_n\Omega^{n} [\, (\sS^{n})^{\otimes m}  \times_{\textbf{k}}^{\h}  0\,] \; \simeq \; \bigoplus_{i=1}^m \binom{m}{i} \, \hocolim_n  \textbf{k}[(i-1)n].$$
	Note that the complex $\hocolim_n \textbf{k}[(i-1)n]$ vanishes for every $i>1$, due to the fact that the homology functor commutes with filtered colimits. For the case $i=1$, there is an evident quasi-isomorphism $\hocolim_n \textbf{k}[(i-1)n] \simeq \textbf{k}$. Accordingly, we obtain
	$$ \hocolim_n\Omega^{n} [\, (\sS^{n})^{\otimes m}  \times_{\textbf{k}}^{\h}  0\,] \; \simeq \; \textbf{k}^{\oplus m},$$
	from which the proof is completed.
\end{proof}
\end{lem}

\begin{rem}\label{r:stlp} The infinitesimal $\P$-bimodule structure on $\ovl{\rL}_\P$ is described as follows.
\begin{enumerate}
	\item As an infinitesimal left $\P$-module, $\ovl{\rL}_\P$ is freely generated by $\I_C$. We may therefore write $$\ovl{\rL}_\P \cong \P\circ_{(1)}\I_C$$ (see Remark~\ref{r:inffr}).

	\item For the (infinitesimal) right $\P$-action, we need to define maps of the form
	$$ \circ^{\ir} : \ovl{\rL}_\P(c_1,\cdots,c_m;c) \otimes \P(d_1,\cdots,d_n;c_j) \lrar \ovl{\rL}_\P(c_1,\cdots,c_{j-1},d_1,\cdots,d_n,c_{j+1},\cdots,c_m;c)$$
 (see Remark \ref{r:infright}). This consists of, for each $i\in\{1,\cdots,m\}$, a component map  
	$$ \circ^{\ir} : \P(c_1,\cdots,c_m;c)^{\{c_i\}} \otimes \P(d_1,\cdots,d_n;c_j) \lrar \ovl{\rL}_\P(c_1,\cdots,c_{j-1},d_1,\cdots,d_n,c_{j+1},\cdots,c_m;c)$$
where $\P(c_1,\cdots,c_m;c)^{\{c_i\}}$ denotes $\P(c_1,\cdots,c_m;c)$ itself as a summand of $\ovl{\rL}_\P(c_1,\cdots,c_m;c)$ corresponding to $c_i$. When $i \neq j$, the latter map is induced by the composition in $\P$: 
$$ \P(c_1,\cdots,c_m;c)^{\{c_i\}} \otimes \P(d_1,\cdots,d_n;c_j) \lrar \P(c_1,\cdots,c_{j-1},d_1,\cdots,d_n,c_{j+1},\cdots,c_m;c)^{\{c_i\}}.$$ 
When $i = j$, the map $\circ^{\ir}$ factors through an operadic composition followed by a diagonal map:
$$ \P(c_1,\cdots,c_m;c)^{\{c_j\}} \otimes \P(d_1,\cdots,d_n;c_j) \lrar  \bigoplus_{k=1}^{n}\P(c_1,\cdots,c_{j-1},d_1,\cdots,d_n,c_{j+1},\cdots,c_m;c)^{\{d_k\}}.$$
\end{enumerate}
\end{rem}

\begin{rem}\label{r:lp} From a set-theoretic perspective, the operation $\circ^{\ir}$ is given as follows. For $\mu\in\P(c_1,\cdots,c_m;c)$ and each $i\in\{1,\cdots,m\}$, we denote by $\mu^{\{c_i\}}\in\ovl{\rL}_\P(c_1,\cdots,c_m;c)$ the element 
$\mu$ regarded as lying in the summand $\P(c_1,\cdots,c_m;c)^{\{c_i\}}$. Then, for $\lambda\in\P(d_1,\cdots,d_n;c_j)$, we define
	$$ \mu^{\{c_i\}} \circ^{\ir} \lambda \; = \; \begin{cases}
		(\mu \circ \lambda)^{\{c_i\}} & \text{ if } \, j \neq i \\ 
		\sum_{k=1}^{n} (\mu \circ \lambda)^{\{d_k\}} & \text{ if } \, j = i \end{cases} $$
where, as usual, $-\circ-$ denotes the (partial) composition of operations. 
\end{rem}

We now state the main result of this section.
\begin{thm}\label{t:cotan} Suppose that $\P \in \Op_C(\C(\textbf{k}))$ is adequate (see Definition~\ref{d:ade}).
\begin{enumerate}
	\item Under the equivalence $\T_\P\Op(\C(\textbf{k})) \simeq \IbMod(\P)$, the cotangent complex $\rL_\P$ is identified with $\ovl{\rL}_\P[-1] \in \IbMod(\P)$, in which $\ovl{\rL}_\P = \P\circ_{(1)}\I_C$ is given at each level by
	$$ \ovl{\rL}_\P(c_1,\cdots,c_m;c) = \P(c_1,\cdots,c_m;c)^{\oplus \, m},$$
	with the infinitesimal $\P$-bimodule structure as described in Remark~\ref{r:stlp}.

	\item Consequently, the Quillen cohomology of $\P$ with coefficients in an object $M\in\IbMod(\P)$ is formulated as
	$$ \HHQ^\star(\P ; M)  \simeq \Map^{\h}_{\IbMod(\P)}(\ovl{\rL}_\P[-1],M),$$
	and the $n$-th Quillen cohomology group is given by
	$$  \HHQ^n(\P ; M)  \cong \pi_0\Map^{\h}_{\IbMod(\P)}(\ovl{\rL}_\P,M[n+1]).$$
\end{enumerate}	 
\end{thm}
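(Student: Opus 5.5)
The plan is to assemble Theorem~\ref{t:cotan} from Theorem~\ref{t:keylem}, Remark~\ref{r:cotan} and Lemma~\ref{l:cotan}. The only genuinely new input is the identification of the infinitesimal bimodule structure.

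\emph{Step 1: the cotangent complex up to levelwise equivalence.} By Remark~\ref{r:cotan}, for adequate $\P$ the conclusion of Theorem~\ref{t:keylem}(2) holds. So under the chain of Quillen equivalences \eqref{eq:optan}, $\rL_\P$ corresponds to $\ovl{\rL}_\P[-1]$, with $\ovl{\rL}_\P$ given levelwise by formula \eqref{eq:keyeq}. Lemma~\ref{l:cotan} then identifies each level with $\P(c_1,\cdots,c_m;c)^{\oplus m}$. This settles the statement about underlying collections.

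\emph{Step 2: the bimodule structure.} It must be shown that the infinitesimal $\P$-bimodule produced by \eqref{eq:keyeq} is weakly equivalent, as an infinitesimal bimodule, to $\P\circ_{(1)}\I_C$ with the structure of Remark~\ref{r:stlp}. I would run the computation of Lemma~\ref{l:cotan} naturally rather than levelwise. Under the equivalence of the cotangent formalism, $\ovl{\rL}_\P$ is the stabilization of the functor
\[
\P \mapsto \P\otimes (\sS^n)^{\otimes \bullet}\times^{\h}_{\textbf{k}}0 .
\]
Here $(\sS^n)^{\otimes m}$ carries a natural functoriality in $\Fin_*^{\op}$: a map of pointed sets induces multiplication and unit maps on tensor powers of the square-zero algebra $\textbf{k}\oplus\textbf{k}[n]$.

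Tensored with $\P$, this functoriality produces both actions. The left infinitesimal action uses only the operadic composition into the output, so after stabilization it acts freely on the degree-one component $\bigoplus_i \textbf{k}[n]$, indexed by the inputs. That identifies the left structure with $\P\circ_{(1)}\I_C$. The right action along $\lambda\in\P(d_1,\cdots,d_n;c_j)$ corresponds to the codiagonal $\l n\r\to\l 1\r$ on the $j$-th tensor factor. On the linear (stable) part, this map sends the $j$-th copy of $\textbf{k}[n]$ diagonally to $\bigoplus_{k}\textbf{k}[n]$, and it is the identity on the other inputs. This reproduces exactly the formula of Remark~\ref{r:lp}. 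The higher terms $\textbf{k}[in]$ with $i>1$ die in the colimit, so they do not affect the structure.

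\emph{Step 3: consequences.} Part (2) follows formally from part (1). Quillen cohomology is the derived mapping space in $\T_\P\Op(\C(\textbf{k}))$, and derived mapping spaces are preserved by Quillen equivalences, which gives $\HHQ^\star(\P;M)\simeq\Map^{\h}_{\IbMod(\P)}(\ovl{\rL}_\P[-1],M)$. For the groups, use stability of $\IbMod(\P)$ to move the shift: $\Map(\ovl{\rL}_\P[-1],M[n])\simeq\Map(\ovl{\rL}_\P,M[n+1])$, then take $\pi_0$.

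\emph{Main obstacle.} Step 2 is the hard part. One must verify that the identification of Lemma~\ref{l:cotan} is compatible with the bimodule structure, and not merely a levelwise equivalence; this is the first thing I would make precise. I would construct a direct comparison map $\P\circ_{(1)}\I_C\to\ovl{\rL}_\P$ (suitably shifted), defined on generators $\I_C$ by the inclusion of the fundamental class of $\textbf{k}[n]$ in arity one. By freeness of the left structure, it suffices to check two things: that this map respects the right action, which is the codiagonal computation in Step 2, and that it is a levelwise quasi-isomorphism, which is Lemma~\ref{l:cotan}. Since weak equivalences in $\IbMod(\P)$ are detected levelwise, this concludes the argument.
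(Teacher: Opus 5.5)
Your proposal follows the paper's route: Theorem~\ref{t:keylem}(2) is extended to adequate operads via Remark~\ref{r:cotan}, combined with the levelwise computation of Lemma~\ref{l:cotan}, and part (2) is a formal consequence of the Quillen equivalence and stability; the paper only records the bimodule structure in Remark~\ref{r:stlp}, so your Step 2 is extra justification it does not supply. One correction to that step: functors on $\textbf{Ib}^{\P}$ are contravariant in $\Fin_*$, so the functoriality of $(\sS^n)^{\otimes \bullet}$ must come from the coalgebra structure of $\sS^n \simeq \textbf{k}\oplus\textbf{k}[n]$ (degree-$n$ class primitive, as for the diagonal of the pointed sphere) together with its coaugmentation and counit, not from a multiplication, which has the wrong variance and is inconsistent with the diagonal formula of Remark~\ref{r:lp} that you correctly arrive at.
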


In the rest of this subsection, we discuss some key features of the object $\ovl{\rL}_\P \in \IbMod(\P)$.

\begin{rem}\label{r:fres} The object $\ovl{\rL}_\P$ was studied in \cite[$\S$10.3]{Fresse1} and \cite[$\S$7.2]{Joan} as a right $\P$-module governing the \textbf{module of Kähler differentials} of $\P$-algebras. We will revisit this result in Corollary~\ref{co:keyas} below.
\end{rem}

\begin{define}\label{d:der} Let $A$ be a $\P$-algebra, and $N\in\Mod_{\P}^A$ an $A$-module.
	
	\begin{enumerate}[(1)] 
		\item A \textbf{derivation from $A$ to} $N$ is a map $\delta : A \lrar N$ in $\C(\textbf{k})^{\times C}$, given by a collection of maps $\delta = \{A(c) \overset{\delta_c}{\lrar} N(c)\}_{c\in C}$, that satisfies equations of the form
		\begin{equation}\label{eq:der}
			\delta_c(\mu\cdot(\alpha_1,\cdots,\alpha_m)) = \sum_{i=1}^m \mu\cdot(\alpha_1,\cdots,\delta_{c_i}(\alpha_i),\cdots,\alpha_m)
		\end{equation}
		in which $\mu \in \P(c_1,\cdots,c_m;c)$ and $\alpha_i \in A(c_i)$ for $i=1,\cdots,m$ (cf., e.g., \cite{Loday, Fresse1}). We denote by $\Der(A,N)$ the \textbf{set of derivations from $A$ to $N$}.

		\item The \textbf{module of K{\"a}hler differentials of} $A$, denoted by  $\Om_A$, is the $A$-module that represents the functor $N \mapsto \Der(A,N)$. That is, there are natural isomorphisms of the form
		$$ \Hom_{\Mod_{\P}^A}(\Om_A,N)  \cong \Der(A,N).$$
	\end{enumerate}
\end{define}

\begin{rem}\label{r:OmA} Alternatively, $\Om_A$ can be characterized as follows. The coproduct $A\oplus N$ (as objects in $\C(\textbf{k})^{\times C}$) admits the canonical structure of a $\P$-algebra over $A$, called the \textbf{square zero extension of} $A$ \textbf{by} $N$, and denoted by $A \ltimes N$  (see \cite[$\S$12.3.3]{Loday}). Moreover, this construction determines a right adjoint 
	$$ A \ltimes (-) : \Mod_{\P}^A \lrar \Alg_{\P}(\C(\textbf{k}))_{/A}.$$ We denote by $\Om^{/A}$ the corresponding left adjoint. One can then verify that $\Om_A \cong \Om^{/A}(\Id_A)$.
\end{rem}

We introduce the following concept.
\begin{define}\label{d:opder} Let $M \in \IbMod(\P)$ be an infinitesimal $\P$-bimodule. A \textbf{tangent structure on $M$} is a map $\varepsilon : \I_C \lrar M$ of $C$-collections, given by a collection $\varepsilon = \{\varepsilon_c \in M(c;c) \, | \, c\in C \}$ where each $\varepsilon_c$ is of degree $0$,  such that for every $\mu \in \P(c_1,\cdots,c_m;c)$ the following relation holds:
\begin{equation}\label{eq:datumm}
	\varepsilon_c \circ^{\ir} \mu  = \sum_{i=1}^{m}\mu\circ^{i\ell}\varepsilon_{c_i}
\end{equation}
(see Definition \ref{d:infbi} for notation). We denote by $\Tan_\P(M)$ the  \textbf{set of tangent structures on $M$}. 
\end{define}

The following statement illustrates the main interest in this notion.
\begin{pro}\label{p:keyas} Let $A$ be a $\P$-algebra, $N$ an $A$-module, and $M$ an infinitesimal $\P$-bimodule.
	\begin{enumerate}[(1)] 
	\item There is a natural isomorphism of the form
		 $$  \Hom_{\IbMod(\P)}(\ovl{\rL}_\P,M) \cong \Tan_\P(M).$$

		\item There is a natural isomorphism 
		$$ \Der(A,N)  \cong \Tan_\P(\End_{A,N})$$
identifying  derivations from $A$ to $N$ with tangent structures on $\End_{A,N} \in \IbMod(\P)$ (cf. $\S$\ref{s:endo}).   
	\end{enumerate}
\begin{proof} (1) Let $f :  \ovl{\rL}_\P \lrar M$ be a map in $\IbMod(\P)$. Recall that, as an infinitesimal left $\P$-module, $\ovl{\rL}_\P$ is freely generated by $\I_C$. Therefore, regarded as a map in $\ILMod(\P)$, $f$ is determined by a single map $\varepsilon : \I_C \lrar M$ of $C$-collections. As in Definition \ref{d:opder}, we write $\varepsilon = \{\varepsilon_c \in M(c;c) \, | \, c\in C \}$. Accordingly, for $\mu \in \P(c_1,\cdots,c_m;c)$, the map $f$ sends  each $\mu^{\{c_i\}}\in \ovl{\rL}_\P(c_1,\cdots,c_m;c)$ to $$f(\mu^{\{c_i\}}) = \mu \circ^{i\ell} \varepsilon_{c_i} \in M(c_1,\cdots,c_m;c).$$
This rule is compatible with the right $\P$-module structures, and one can verify that this compatibility condition is equivalent to the relation \eqref{eq:datumm}. It follows that giving $f$ amounts to specifying a tangent structure $\varepsilon : \I_C \lrar M$.   
	
(2) By definition, a tangent structure on $\End_{A,N}$ is represented by a map $\delta : \I_C \lrar \End_{A,N}$, which is equivalent to a collection $\delta = \{A(c) \x{\delta_c}{\lrar} N(c)\}_{c\in C}$ of maps in $\C(\textbf{k})$. Moreover, these are required to satisfy the relation \eqref{eq:datumm}:
$$ 	\delta_c \circ^{\ir} \mu  = \sum_{i=1}^{m}\mu\circ^{i\ell}\delta_{c_i}  $$
for every $\mu \in \P(c_1,\cdots,c_m;c)$.  This requirement is equivalent to the condition that $\delta$ is a derivation $A \lrar N$ \eqref{eq:der}.  
\end{proof}
\end{pro}

We give a more conceptual proof of the following result.
\begin{cor}\label{co:keyas}\textup{(Fresse \cite{Fresse1}, Millès \cite{Joan})} Let $A$ be a $\P$-algebra. There is a natural isomorphism of $A$-modules:
\begin{equation}\label{eq:egovv}
	\ovl{\rL}_\P\circ_\P A  \cong \Om_A
\end{equation}
(see $\S$\ref{s:endo} for notation).
\begin{proof}  For an $A$-module $N$, we have a chain of natural isomorphisms
	$$ \Hom_{\Mod_{\P}^A}(\ovl{\rL}_\P\circ_\P A,N) \, \cong \, \Hom_{\IbMod(\P)}(\ovl{\rL}_\P,\End_{A,N}) \, \cong \, \Der(A,N)$$
in which the first isomorphism is due to the adjunction of Proposition~\ref{p:ends}(iii), and the second follows by combining the two parts of Proposition \ref{p:keyas}. The assertion then follows directly from the definition of $\Om_A$.
\end{proof}
\end{cor}

\begin{rem}\label{r:control} The identification \eqref{eq:egovv} provides an explicit description of $\Om_A$, as presented, e.g., in \cite{Loday, Fresse1, Joan}. Moreover, when combined with \cite[Theorem 5.1]{Hoang1}, it allows us to recover the significant result that the module of Kähler differentials of $A$ is a model for the cotangent complex $\rL_A \in \T_A\Alg_\P(\C(\textbf{k}))$, provided that $A\in\Alg_\P(\C(\textbf{k}))$ is cofibrant (see also \cite{YonatanCotangent} for another conceptual proof).
\end{rem}

Finally, combining Theorem \ref{t:cotan} with \cite[Remark 5.28]{Hoang1}, we may reformulate the Quillen cohomology of $\P$-algebras as follows.
\begin{cor}\label{co:QPalg} Let $A\in\Alg_\P(\C(\textbf{k}))$ be a cofibrant $\P$-algebra, and $N\in\Mod_{\P}^A$ an $A$-module. The Quillen cohomology of $A$ with coefficients in $N$ can be computed by the formula
	$$  \HHQ^\star(A ; N) \simeq \Omega \HHQ^\star(\P;\End_{A,N}) \simeq \Map^{\h}_{\IbMod(\P)}(\ovl{\rL}_\P,\End_{A,N}),$$
and the $n$-th Quillen cohomology group is computed by
  $$  \HHQ^n(A ; N) \cong \pi_0\Map^{\h}_{\IbMod(\P)}(\ovl{\rL}_\P,\End_{A,N}[n]).$$
\end{cor}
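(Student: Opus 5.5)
The plan is to combine Theorem~\ref{t:cotan} with the comparison result cited just before the statement, namely \cite[Remark 5.28]{Hoang1}. That result expresses the Quillen cohomology of a cofibrant $\P$-algebra through the Quillen cohomology of $\P$ with coefficients in the endomorphism bimodule:
$$\HHQ^\star(A;N)\simeq \Omega\,\HHQ^\star(\P;\End_{A,N}).$$
We use it as a black box. It rests on the diagram of adjunctions \eqref{eq:prekey} passed to tangent categories. Under the identifications $\T_A\Alg_\P\simeq \Mod^A_\P$ and $\T_\P\Op\simeq\IbMod(\P)$, the functor $(-)\circ_\P A$ carries the (shifted) cotangent complex of $\P$ to that of $A$.

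Two caveats must be checked. First, $\End_{A,N}$ has to represent the correct derived object. Since every object of $\C(\textbf{k})$ is fibrant and $A$ is cofibrant, hence levelwise cofibrant when $\P$ is $\Sigma$-cofibrant, $\End_{A,N}$ is already a derived right adjoint and no replacement of $N$ is needed. Second, $\P$ must be adequate so that Theorem~\ref{t:cotan} applies. I take this as the standing hypothesis of the subsection.

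The first equivalence is then exactly the cited remark. For the second, Theorem~\ref{t:cotan}(2) gives
$$\HHQ^\star(\P;\End_{A,N})\simeq\Map^{\h}_{\IbMod(\P)}(\ovl{\rL}_\P[-1],\End_{A,N}).$$
$\IbMod(\P)$ is a stable model category, being a category of $\C(\textbf{k})$-valued enriched functors by Proposition~\ref{p:ibfunc}. In a stable setting, looping a derived mapping space corresponds to desuspending the target, or equivalently suspending the source:
$$\Omega\,\Map^{\h}(\ovl{\rL}_\P[-1],M)\simeq\Map^{\h}(\ovl{\rL}_\P,M).$$
Applying this with $M=\End_{A,N}$ yields the displayed formula.

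For the group-level statement, replace $N$ by $N[n]$. The shift is computed levelwise, so $\End_{A,N[n]}\cong\End_{A,N}[n]$ as infinitesimal $\P$-bimodules, because mapping complexes commute with shifts in the target. Moreover, the $n$-fold suspension in $\T_A\Alg_\P\simeq\Mod^A_\P$ is the levelwise shift. Taking $\pi_0$ then gives
$$\HHQ^n(A;N)\cong\pi_0\Map^{\h}(\ovl{\rL}_\P,\End_{A,N}[n]).$$

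The main obstacle is the first equivalence, that is, the compatibility of the equivalences of tangent categories with $(-)\circ_\P A$ and $\End_{A,-}$. Since it is imported from \cite{Hoang1}, the only real work here is checking that its hypotheses hold: cofibrancy of $A$, $\Sigma$-cofibrancy of $\P$, and the existence of the required semi-model structures from Remark~\ref{r:AA}.

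As a consistency check, Corollary~\ref{co:keyas} and Remark~\ref{r:control} give a direct route. Since $\ovl{\rL}_\P\circ_\P A\cong\Om_A$ models $\rL_A$, applying the derived adjunction of Proposition~\ref{p:ends}(iii) gives $\Map^{\h}(\Om_A,N)\simeq\Map^{\h}(\ovl{\rL}_\P,\End_{A,N})$. This requires knowing that $\ovl{\rL}_\P$ is cofibrant, or at least that $(-)\circ_\P A$ preserves the relevant weak equivalences on it. That holds because $\ovl{\rL}_\P=\P\circ_{(1)}\I_C$ is built from a $\Sigma$-cofibrant $\P$.
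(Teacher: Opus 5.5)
Your proposal is correct and follows the paper's own argument: the paper obtains this corollary exactly by combining Theorem~\ref{t:cotan} with \cite[Remark 5.28]{Hoang1}, and your bookkeeping ($\Omega\Map^{\h}(\ovl{\rL}_\P[-1],-)\simeq\Map^{\h}(\ovl{\rL}_\P,-)$ in the stable category $\IbMod(\P)$, and $\End_{A,N[n]}\cong\End_{A,N}[n]$ for the group-level formula) spells out what the paper leaves implicit. Only your optional cross-check via $\Om_A$ is loosely justified: $\ovl{\rL}_\P$ is free only as an infinitesimal left module and need not be cofibrant in $\IbMod(\P)$ (for $\P=\Com$ over $\QQ$ it is the Pirashvili functor, which is not projective). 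So that route would need the homotopy invariance of $M\mapsto M\circ_\P A$ on $\Sigma$-cofibrant right $\P$-modules for cofibrant $A$, but this does not affect your main proof.
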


\subsection{Operadic modules of K{\"a}hler differentials and reduced Quillen cohomology}\label{s:redcotandg}

As before, we let $\P\in\Op_C(\C(\textbf{k}))$ be a fixed $C$-colored operad in $\C(\textbf{k})$, for some set of colors $C$.

Recall that $\rL^{\red}_\P \in \T_\P\Op_C(\C(\textbf{k}))$ signifies the cotangent complex of $\P$ when considered as an object of $\Op_C(\cS)$ (see $\S$\ref{s:other}). As in $\S$\ref{s:cotandg}, we have a Quillen equivalence
\begin{equation}\label{eq:redqadj} 
	\adjunction*{\simeq}{\IbMod(\P)}{\T_\P\Op_C(\C(\textbf{k}))}{},
\end{equation}
provided that $\P$ is $\Sigma$-cofibrant. Accordingly, the \textbf{reduced Quillen cohomology} of $\P$ with coefficients in an object $M\in\IbMod(\P)$ is given by
\begin{equation}\label{eq:redqcohom}
	 \HHQ^\star_{\red}(\P; M) \simeq \Map^{\h}_{\IbMod(\P)}(\rL^{\red}_\P, M)
\end{equation}
where we use the same notation for the derived image of $\rL^{\red}_\P$ through the identification \eqref{eq:redqadj}.

\smallskip

We now describe $\rL^{\red}_\P$, beginning with the following observation.

\begin{rem}\label{r:redpkah} As in \cite[$\S$3]{Guti}, $S^C$ denotes the \textbf{operad of $C$-colored operads}, yielding a categorical isomorphism
	\begin{equation}\label{eq:scop}
		\Op_C(\C(\textbf{k})) \cong \Alg_{S^C}(\C(\textbf{k})).
	\end{equation}
	Thus $\P$ may be viewed as an $S^C$-algebra, and one readily verifies the categorical isomorphism
	$$ \IbMod(\P) \cong \Mod_{S^C}^\P $$
	between infinitesimal $\P$-bimodules and $\P$-modules over $S^C$. Moreover, under these identifications, an (\textbf{operadic}) \textbf{derivation} from $\P$ to  $M\in\IbMod(\P)$ is a map $d : \P \lrar M$ of $C$-collections satisfying equations of the form
	\begin{equation}\label{eq:derivative}
		d(\mu\circ\nu) = \mu\circ^{i\ell}d(\nu) + d(\mu)\circ^{\ir}\nu
	\end{equation}
	in which $\mu \in \P(c_1,\cdots,c_m;c)$ and $\nu \in \P(d_1,\cdots,d_n;c_i)$ for some $i \in \{1,\cdots,m\}$.  
\end{rem}

Accordingly, the operadic analogue of Definition \ref{d:der} is defined as follows.

\begin{define}\label{d:OmP} The \textbf{(operadic) module of K{\"a}hler differentials} $\Om_\P \in \IbMod(\P)$ is  the module of K{\"a}hler differentials of $\P$ regarded as an $S^C$-algebra. This is characterized by natural isomorphisms of the form $$ \Hom_{\IbMod(\P)}(\Om_\P,M) \cong \Der(\P,M)$$
where $\Der(\P,M)$ refers to the set of derivations from  $\P$ to $M \in \IbMod(\P)$. (See \cite{Vallette} for more details.)
\end{define}

\begin{rem}\label{r:OmP} As in Remark \ref{r:OmA}, we can define the \textbf{square-zero extension} $\P \ltimes M \in \Op_C(\C(\textbf{k}))_{/\P}$ of $\P$ by an object $M \in \IbMod(\P)$ as follows. As a $C$-collection, $\P \ltimes M$ is the levelwise coproduct $\P \oplus M$. The unit operations are given by $(\id_c, 0)$ for $c\in C$. Moreover, for $(\mu , p) \in (\P \oplus M)(c_1,\cdots,c_m;c)$ and $(\nu , q) \in (\P \oplus M)(d_1,\cdots,d_n;c_i)$, the composition is defined by
		$$ (\mu , p) \circ (\nu , q) := (\mu\circ\nu, \mu\circ^{i\ell}q + p\circ^{\ir}\nu).$$
Finally, this construction determines  a right adjoint
	$$  \P \ltimes (-) :  \IbMod(\P) \lrar \Op_C(\C(\textbf{k}))_{/\P},$$
and we then obtain $\Om_\P \cong \Om^{/\P}(\Id_\P)$ where $\Om^{/\P}$ denotes the left adjoint to $\P \ltimes (-)$.
\end{rem}

\begin{example}\label{ex:derf} Suppose we have a map $f : \P \lrar \Q$ between $C$-colored dg operads, which endows $\Q$ with an  infinitesimal $\P$-bimodule structure. There is a canonical derivation $d_f : \P \lrar \Q$ defined by sending each operation $\lambda \in \P$ of arity $k$ to $$d_f(\lambda) := (k-1)f(\lambda).$$
	In particular, $d_f$ is given by $-f$ on \textit{nullary} (i.e., $0$-ary) operations, maps every unary operation to zero, and  agrees with $f$ on binary operations. 
\end{example}

We now state the main result of this section.

\begin{prop}\label{t:redop} Suppose that $\P\in\Op_C(\C(\textbf{k}))$ is cofibrant. Under the identification $$\T_\P\Op_C(\C(\textbf{k}))\simeq\IbMod(\P),$$ the reduced cotangent complex $\rL^{\red}_\P\in\T_\P\Op_C(\C(\textbf{k}))$ corresponds to $\Om_\P \in \IbMod(\P)$. Consequently, the reduced Quillen cohomology of $\P$ with coefficients in an object $M \in \IbMod(\P)$ is computed by
	\begin{equation}
		\HHQ^\star_{\red}(\P; M) \simeq \Map^{\h}_{\IbMod(\P)}(\Om_\P,M).
	\end{equation} 
\begin{proof} The isomorphism \eqref{eq:scop} determines a Quillen equivalence $\adjunction*{\simeq}{\Op_C(\C(\textbf{k}))}{\Alg_{S^C}(\C(\textbf{k}))}{}$ between the transferred semi-model structures (see $\S$\ref{s:bases}), from which we further obtain a Quillen equivalence between tangent categories $$\adjunction*{\simeq}{\T_\P\Op_C(\C(\textbf{k}))}{\T_\P\Alg_{S^C}(\C(\textbf{k}))}{}.$$ Clearly, the latter preserves cotangent complexes; that is, it sends $\rL^{\red}_\P$ to the cotangent complex of $\P$ considered as an $S^C$-algebra. We complete the proof by combining this with \cite[Corollary 2.5.11]{YonatanCotangent} (see also Remark \ref{r:control}).
\end{proof}
\end{prop}

Next, we outline the relation between Quillen cohomology and reduced Quillen cohomology of $\P$.

\begin{lem}\label{l:fibred} Suppose that $\P\in\Op_C(\C(\textbf{k}))$ is $\Sigma$-cofibrant. Then there is a (homotopy) cofiber sequence in $\IbMod(\P)$ of the form
	\begin{equation}\label{eq:cofred}
		\rL^{\red}_\P  \lrar \P\circ_{(1)}\P \lrar \rL_\P[1]
	\end{equation}
	where, as usual, we use the same notation for the derived images of $\rL^{\red}_\P$  and $\rL_\P$  in $\IbMod(\P)$, and $\P\circ_{(1)}\P$ represents the free  infinitesimal $\P$-bimodule generated by $\I_C$ (cf. Remark \ref{r:inffr}). 
\begin{proof} Let $\rL_{\P/\I_C}\in\T_\P\Op(\C(\textbf{k}))$ denote the relative cotangent complex of the unit map $\eta : \I_C \lrar \P$ (see Definition \ref{d:cotan}). It arises from a cofiber sequence in $\T_\P\Op(\C(\textbf{k}))$ of the form
	$$ \eta_!(\rL_{\I_C}) \lrar \rL_\P \lrar \rL_{\P/\I_C}$$ 
where $\eta_!$ refers to the induced functor $\eta_! : \T_{\I_C}\Op(\C(\textbf{k})) \lrar  \T_\P\Op(\C(\textbf{k}))$ (see Remark \ref{r:relcotan}), which is (homotopically) equivalent to the free functor
$$ \Free_\P^{\si} : \Coll_C(\C(\textbf{k})) \cong  \IbMod(\I_C) \lrar \IbMod(\P).$$
By Theorem \ref{t:cotan}, the derived image of $\rL_{\I_C}$ in $\Coll_C(\C(\textbf{k}))$ is  $\ovl{\rL}_{\I_C}[-1] = \I_C[-1]$, whose image through $\Free_\P^{\si}$ is precisely $(\P\circ_{(1)}\P)[-1]$. The proof is completed by using \cite[Lemma 5.2.9]{Hoang}, which proves that $\rL^{\red}_\P$ and $\rL_{\P/\I_C}$ have the same derived image in $\IbMod(\P)$.
\end{proof}
\end{lem}

The above lemma has the following immediate consequence.

\begin{cor}\label{co:fibred} Suppose that $\P$ is $\Sigma$-cofibrant and suppose given an object $M\in\IbMod(\P)$. Then there is a fiber sequence of spaces of the form
	\begin{equation}\label{eq:fibfred}
		\Om\HHQ^\star(\P; M)  \lrar \prod_{c\in C} |M(c;c)| \lrar \HHQ^\star_{\red}(\P; M)
	\end{equation}
	in which $|M(c;c)|$ refers to the underlying space of $M(c;c)\in \C(\textbf{k})$ (see $\S$\ref{s:other}). 
\end{cor}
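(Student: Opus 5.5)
Apply the functor $\Map^{\h}_{\IbMod(\P)}(-,M)$ to the cofiber sequence \eqref{eq:cofred} of Lemma~\ref{l:fibred}. Since $\IbMod(\P)$ is stable, this contravariant functor sends homotopy cofiber sequences to homotopy fiber sequences of spaces. We obtain
$$ \Map^{\h}_{\IbMod(\P)}(\rL_\P[1],M) \lrar \Map^{\h}_{\IbMod(\P)}(\P\circ_{(1)}\P,M) \lrar \Map^{\h}_{\IbMod(\P)}(\rL^{\red}_\P,M).$$
The task is then to identify each of the three terms.

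For the right term, \eqref{eq:redqcohom} shows directly that it is $\HHQ^\star_{\red}(\P;M)$. For the left term, use the suspension--loop adjunction in the stable category $\IbMod(\P)$: $\Map^{\h}(\rL_\P[1],M)\simeq \Omega\Map^{\h}(\rL_\P,M)$. By the equivalence $\T_\P\Op(\C(\textbf{k}))\simeq\IbMod(\P)$ of Theorem~\ref{t:keylem}(1), which is used to transport $\rL_\P$, the latter space is $\Omega\HHQ^\star(\P;M)$. Only $\Sigma$-cofibrancy is needed here, since $\rL_\P$ simply denotes the derived image and we are not using its explicit model from Theorem~\ref{t:cotan}.

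For the middle term, recall that $\P\circ_{(1)}\P\cong\Free^{\si}_\P(\I_C)$ (Remark~\ref{r:inffr}). The free--forgetful adjunction $\Free^{\si}_\P \dashv \text{forget}$ is a Quillen adjunction for the transferred model structure, because the forgetful functor preserves fibrations and weak equivalences (these are levelwise). Hence it passes to derived mapping spaces:
$$\Map^{\h}_{\IbMod(\P)}(\Free^{\si}_\P(\I_C),M)\simeq \Map^{\h}_{\Coll_C(\C(\textbf{k}))}(\I_C,M)\simeq \prod_{c\in C}\Map^{\h}_{\C(\textbf{k})}(\textbf{k},M(c;c)) = \prod_{c\in C}|M(c;c)|.$$
In this computation, $\I_C$ is cofibrant in $\Coll_C$: it is a sum of representables at arity one with trivial $\Sigma_1$-action, and $\textbf{k}$ is cofibrant. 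The step to check most carefully is that $\Free^{\si}_\P(\I_C)$ already computes the \emph{derived} free object. This holds because $\I_C$ is cofibrant, so no cofibrant replacement is needed; $\Sigma$-cofibrancy of $\P$ guarantees that the model structure on $\IbMod(\P)$ exists and that the Quillen adjunction is well behaved.

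Substituting the three identifications into the fiber sequence above yields \eqref{eq:fibfred}.
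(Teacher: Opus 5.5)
Your proposal is correct and is exactly the argument the paper has in mind: the paper records this corollary as an immediate consequence of Lemma~\ref{l:fibred}, obtained by applying $\Map^{\h}_{\IbMod(\P)}(-,M)$ to the cofiber sequence \eqref{eq:cofred}. The three terms are then identified as you do, via \eqref{eq:redqcohom}, the loop--suspension adjunction combined with Theorem~\ref{t:keylem}(1), and the free--forgetful adjunction applied to $\P\circ_{(1)}\P\cong\Free^{\si}_\P(\I_C)$ with $\I_C$ cofibrant. One small remark: stability of $\IbMod(\P)$ is not needed for the first step, since $\Map^{\h}(-,M)$ turns cofiber sequences into fiber sequences in any pointed model category; invoking stability is harmless, though.
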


\begin{rem} If $\P$ is cofibrant and connective, then by Theorem \ref{t:cotan} and Proposition \ref{t:redop}, the sequence \eqref{eq:cofred} takes the form
	\begin{equation}\label{eq:strcofred}
		\Om_\P \x{\Psi_\P}{\lrar} \P\circ_{(1)}\P \x{\varphi_\P}{\lrar} \ovl{\rL}_\P.
	\end{equation}
	Let us explain these two maps in more detail. As discussed earlier, $\Psi_\P$ is classified by a single derivation $d : \P \lrar \P\circ_{(1)}\P$, defined by sending each $\mu \in \P(c_1,\cdots,c_m;c)$ to
	$$ d(\mu)  := (\mu \, ; \, \id_{c_1}) + \cdots + (\mu \, ; \, \id_{c_m}) - (\id_{c} \, ; \, \mu).$$
	On the other hand, the map $\varphi_\P : \P\circ_{(1)}\P \lrar \ovl{\rL}_\P$ is classified by the canonical map $\I_C \lrar \ovl{\rL}_\P$ of $C$-collections, defined by inserting the unit operation $\id_c$ into $\ovl{\rL}_\P(c;c) = \P(c;c)$ for every $c\in C$. 
\end{rem}

\subsection{Cotangent complex of the dg $\E_\infty$-operad}\label{s:coeinfty}

We will fix a $\Sigma$-cofibrant model for the operad $\Com \in \Op_*(\C(\textbf{k}))$, denoted $\E_\infty$, and refer to it as the \textbf{dg} $\E_\infty$-\textbf{operad}. The category $\Fun(\Fin_*^{\op} , \C(\textbf{k}))$ is equipped with the projective model structure. 

\begin{thm}\label{t:LEinfty} There is a zigzag of left Quillen equivalences  $$ \Fun(\Fin_*^{\op} , \C(\textbf{k}))    \x{\simeq}{\llar}     \IbMod(\E_\infty) \lrarsimeq   \T_{\E_\infty}\Op(\C(\textbf{k})).$$
	Moreover, the cotangent complex $\rL_{\E_\infty} \in \T_{\E_\infty}\Op(\C(\textbf{k}))$ is then identified with $t[-1] : \Fin_*^{\op} \lrar \C(\textbf{k})$, i.e., the desuspension of the  Pirashvili functor $t$ (see Example \ref{ex:tfunctor}). 
	\begin{proof} Clearly, the induced functor $\textbf{Ib}^{\E_\infty} \lrar \textbf{Ib}^{\Com} \cong \Fin_*^{\op}$ is a weak equivalence of dg categories (see $\S$\ref{s:opinfi}). Thus, we obtain a Quillen equivalence	$ \adjunction*{\simeq}{\IbMod(\E_\infty)}{\Fun(\Fin_*^{\op} , \C(\textbf{k}))}{}$. Combined with Theorem \ref{t:keylem}(1), this proves the first statement.
		
	For the second statement, we just need to show that the derived image of	$\ovl{\rL}_{\E_\infty}$ under the left Quillen equivalence  $\IbMod(\E_\infty) \lrarsimeq \Fun(\Fin_*^{\op} , \C(\textbf{k}))$ is weakly equivalent to $t$. To this end, it is enough to verify that $\ovl{\rL}_{\Com} \in \IbMod(\Com)$, regarded as a functor $\Fin_*^{\op} \lrar \C(\textbf{k})$, is isomorphic to $t$. Unwinding the definitions, $\ovl{\rL}_{\Com}$ sends each object $\l m \r$ to $\ovl{\rL}_{\Com}(\l m \r) = \textbf{k}^{\oplus \, m}$; and moreover, for each map $f : \l m \r \lrar \l n \r$ in $\Fin_*$, the corresponding map 
	$$ \ovl{\rL}_{\Com}(\l n \r) = \textbf{k}^{\oplus \, \{1,\cdots,n\}} \lrar  \textbf{k}^{\oplus \, \{1,\cdots,m\}}  =  \ovl{\rL}_{\Com}(\l m \r)$$ 
	is given by, for each $i\in\{1,\cdots,n\}$, mapping the summand $\textbf{k}^{\{i\}}$ to $\textbf{k}^{\oplus \, \{f^{-1}(i)\}} \subseteq \textbf{k}^{\oplus \, \{1,\cdots,m\}}$ via a diagonal map. This exhibits an equivalent way of defining $t$. 
	\end{proof}
\end{thm}

In light of the above theorem, Quillen cohomology of the dg $\E_\infty$-operad will take coefficients in functors $\Fin_*^{\op} \lrar \C(\textbf{k})$. 

\begin{cor}\label{co:QEinfty}  The Quillen cohomology of $\E_\infty$ with coefficients in a given functor $\F : \Fin_*^{\op} \lrar \C(\textbf{k})$ is computed by
	$$ \HHQ^\star(\E_\infty ; \F)  \simeq \Map^{\h}_{\Fun(\Fin_*^{\op} , \C(\textbf{k}))}(t[-1],\F).$$
Moreover, the $n$-th Quillen cohomology group is computed by the formula
$$ \HHQ^n(\E_\infty ; \F) \cong \pi_0\Map^{\h}_{\Fun(\Fin_*^{\op} , \C(\textbf{k}))}(t,\F[n+1]).$$
\end{cor}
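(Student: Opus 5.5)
The plan is to deduce Corollary~\ref{co:QEinfty} directly from Theorem~\ref{t:LEinfty} combined with Theorem~\ref{t:cotan}(2). By definition, $\HHQ^\star(\E_\infty;\F)$ is the derived mapping space in $\T_{\E_\infty}\Op(\C(\textbf{k}))$ from $\rL_{\E_\infty}$ to the coefficient object. Here the coefficient $\F$ is regarded, via the zigzag of Quillen equivalences
$$\Fun(\Fin_*^{\op},\C(\textbf{k})) \x{\simeq}{\llar} \IbMod(\E_\infty) \lrarsimeq \T_{\E_\infty}\Op(\C(\textbf{k})),$$
as an object of the tangent category. Quillen equivalences induce weak equivalences on derived mapping spaces. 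So I would transport the mapping space along the zigzag, first to $\IbMod(\E_\infty)$ and then to the functor category.

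\textbf{Step 1: reduce to $\IbMod(\E_\infty)$.} Here I would invoke Theorem~\ref{t:cotan}(2) to get $\HHQ^\star(\E_\infty;M)\simeq \Map^{\h}_{\IbMod(\E_\infty)}(\ovl{\rL}_{\E_\infty}[-1],M)$ for $M\in\IbMod(\E_\infty)$. This applies because $\E_\infty$ is $\Sigma$-cofibrant by choice, and connective, so it is adequate. If the chosen model is not connective, one first replaces it by a connective $\Sigma$-cofibrant model; this does not change anything up to Quillen equivalence, by the argument of Remark~\ref{r:cotan}.

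\textbf{Step 2: pass to $\Fun(\Fin_*^{\op},\C(\textbf{k}))$.} The left Quillen equivalence $\IbMod(\E_\infty)\to\Fun(\Fin_*^{\op},\C(\textbf{k}))$ induced by $\textbf{Ib}^{\E_\infty}\to\Fin_*^{\op}$ gives
$$\Map^{\h}_{\IbMod(\E_\infty)}(X,M)\simeq \Map^{\h}_{\Fun}(\mathbb{L}X,\mathbb{L}M).$$
By Theorem~\ref{t:LEinfty}, the derived image of $\ovl{\rL}_{\E_\infty}$ is $t$. This functor is exact, since it is a Quillen equivalence of stable model categories, so it commutes with shifts and the image of $\ovl{\rL}_{\E_\infty}[-1]$ is $t[-1]$. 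Taking $M$ to be the object corresponding to $\F$ yields the first formula.

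\textbf{Step 3: the group-level statement.} Apply the first formula to $\F[n]$ and use stability: the shift $[1]$ is an autoequivalence of the homotopy theory, so $\Map^{\h}(t[-1],\F[n])\simeq\Map^{\h}(t,\F[n+1])$. Taking $\pi_0$ gives the formula for $\HHQ^n$.

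I expect no serious obstacle. The only point needing care is the bookkeeping that coefficients $\F$ really are identified, through the zigzag, with objects of $\T_{\E_\infty}\Op(\C(\textbf{k}))$. This requires checking that derived functors along the zigzag commute with the suspension and desuspension used in the definition of $\HHQ^n$, which holds because all the categories involved are stable.
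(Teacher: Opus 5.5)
Your proposal is correct and follows essentially the paper's route: the paper gives no separate proof, but reads the corollary off Theorem~\ref{t:LEinfty}. Namely, transport the derived mapping space $\Map^{\h}(\rL_{\E_\infty},-)$ along the zigzag of Quillen equivalences, under which $\rL_{\E_\infty}$ corresponds to $t[-1]$, and then use stability to rewrite $\pi_0\Map^{\h}(t[-1],\F[n])$ as $\pi_0\Map^{\h}(t,\F[n+1])$. Your Steps 1--3 spell out exactly this bookkeeping.
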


\begin{rem} Accordingly, $\HHQ^\star(\E_\infty ; \F)$ is (up to a shift) equivalent to the \textbf{stable cohomotopy} of the functor $\F$ (cf. \cite{Pirash}). This shows that the obstruction theory for $\E_\infty$-\textbf{structures}, as initiated by Robinson \cite{Robinson}, can be encoded by Quillen cohomology of $\E_\infty$ itself (cf. \cite[Theorem 5]{Robinson}).
\end{rem}

Next, let $A$ be an $\E_\infty$-algebra and $N$ an $A$-module. As in $\S$\ref{s:endo}, the pair $(A,N)$ gives rise to an object $ \End_{A,N} \in  \IbMod(\E_\infty)$. We use the same notation for the derived image of the latter in $\Fun(\Fin_*^{\op} , \C(\textbf{k}))$. Combining Theorem \ref{t:LEinfty} with \cite[Remark 5.28]{Hoang1}, we obtain another noteworthy consequence.

\begin{cor}\label{co:QEinftyalg} Suppose that $A\in\Alg_{\E_\infty}(\C(\textbf{k}))$ is cofibrant.  Then the Quillen cohomology of $A$ with coefficients in an $A$-module $N$ is computed by 
	$$  \HHQ^\star(A ; N)  \simeq \Omega\HHQ^\star(\E_\infty ; \End_{A,N})   \simeq \Map^{\h}_{\Fun(\Fin_*^{\op} , \C(\textbf{k}))}(t,\End_{A,N}).$$
\end{cor}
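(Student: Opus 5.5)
The plan is to chain three identifications that are already available, applied to the operad $\E_\infty$ and to the pair $(A,N)$.

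First, since $\E_\infty$ is $\Sigma$-cofibrant and connective, it is adequate in the sense of Definition~\ref{d:ade}. Because $A$ is cofibrant, Corollary~\ref{co:QPalg} applies with $\P=\E_\infty$. It gives
$$\HHQ^\star(A;N)\simeq \Omega\HHQ^\star(\E_\infty;\End_{A,N})\simeq \Map^{\h}_{\IbMod(\E_\infty)}(\ovl{\rL}_{\E_\infty},\End_{A,N}).$$
The input here is \cite[Remark 5.28]{Hoang1}, which identifies $\HHQ^\star(A;N)$ with $\Map^{\h}_{\IbMod(\P)}(\rL_\P[1],\End_{A,N})$ via the diagram \eqref{eq:prekey} of adjunctions $(-)\circ_\P A\dashv \End_{A,-}$. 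That identification applies once we regard $\End_{A,N}$ through its derived image in $\IbMod(\E_\infty)$. One should check that $N$ can be taken fibrant so that $\End_{A,N}$ has the correct homotopy type; this holds automatically in $\C(\textbf{k})$, since every object is fibrant.

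Second, we transport the right-hand side along the left Quillen equivalence $\IbMod(\E_\infty)\lrarsimeq\Fun(\Fin_*^{\op},\C(\textbf{k}))$ from Theorem~\ref{t:LEinfty}. This functor is induced by the weak equivalence of dg categories $\textbf{Ib}^{\E_\infty}\to\textbf{Ib}^{\Com}\cong\Fin_*^{\op}$. A Quillen equivalence induces equivalences of derived mapping spaces between derived images. Hence
$$\Map^{\h}_{\IbMod(\E_\infty)}(\ovl{\rL}_{\E_\infty},\End_{A,N})\simeq \Map^{\h}_{\Fun(\Fin_*^{\op},\C(\textbf{k}))}(\mathbb{L}(\ovl{\rL}_{\E_\infty}),\mathbb{L}(\End_{A,N})).$$
By the proof of Theorem~\ref{t:LEinfty}, $\mathbb{L}(\ovl{\rL}_{\E_\infty})\simeq t$. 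By the convention fixed just before the statement, $\End_{A,N}$ in the target denotes its derived image. Similarly, $\Omega\HHQ^\star(\E_\infty;\End_{A,N})\simeq\Omega\Map^{\h}(t[-1],\End_{A,N})\simeq \Map^{\h}(t,\End_{A,N})$ by Corollary~\ref{co:QEinfty} and stability, which gives the middle equivalence directly.

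The only point needing care, and the expected obstacle, is that $\ovl{\rL}_{\E_\infty}$ need not be cofibrant. We need its derived image to agree with the computation done for $\Com$. I would handle this using Lemma~\ref{l:cotan}: $\ovl{\rL}_{\E_\infty}=\E_\infty\circ_{(1)}\I_*$ is free as an infinitesimal left module. It maps quasi-isomorphically, levelwise, to $\ovl{\rL}_{\Com}$ under the equivalence $\E_\infty\to\Com$. So after cofibrant replacement, the left derived functor (left Kan extension along a weak equivalence of dg categories) sends it to an object weakly equivalent to $\ovl{\rL}_{\Com}\cong t$. This is exactly what Theorem~\ref{t:LEinfty} already asserts, so we may simply cite it.
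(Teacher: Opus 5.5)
Your proposal is correct and follows essentially the paper's route: the paper obtains this corollary by combining Theorem~\ref{t:LEinfty} with \cite[Remark 5.28]{Hoang1}, which amounts to Corollary~\ref{co:QPalg} specialized to $\P=\E_\infty$ and then transported to $\Fun(\Fin_*^{\op},\C(\textbf{k}))$, exactly as you do. Your extra discussion of why the derived image of $\ovl{\rL}_{\E_\infty}$ is $t$ just unpacks the argument already given in the proof of Theorem~\ref{t:LEinfty}. Note that the freeness of $\ovl{\rL}_{\E_\infty}$ as an infinitesimal left module comes from Remark~\ref{r:stlp}, not Lemma~\ref{l:cotan}, but this does not affect the argument.
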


\section{Cotangent complex of dg $\E_n$-operads}\label{s:Qprin}

Let us first recall the definition of the \textbf{topological little $n$-discs operad} $\E_n$. We  denote by $$\sD^n:=\{x\in\RR^n \, | \, \lVert x \rVert <1 \}$$ the \textbf{open (unit)  $n$-disc}. By definition, a map $\sD^n \lrar \sD^n$ is a \textbf{rectilinear embedding} if it takes the form $x \mapsto \lambda x +c$ for $\lambda>0$. Moreover, a map $\underset{k}{\bigsqcup}\sD^n \lrar \sD^n$ from $k$ disjoint open $n$-discs to another open $n$-disc is a \textit{rectilinear embedding} if it is an open embedding whose restriction to each component $\sD^n$ is rectilinear. The operad $\rE_n$ is then defined to be the single-colored topological operad with
$$ \rE_n(k) = \Rect(\underset{k}{\bigsqcup}\sD^n,\sD^n)$$ 
the space of rectilinear embeddings of $k$ disjoint open $n$-discs in another open $n$-disc, endowed with the subspace topology inherited from the mapping space in $\Top$. The operadic composition is induced in the evident way by the composition of rectilinear embeddings. 

\begin{notn}\label{no:En} We use the same notation $\rE_n$ to denote the simplicial version of the little $n$-discs operad, for every $n\geq0$. Moreover, we denote by $\mathbb{E}_n$ the \textbf{differential graded (dg) version of}  $\E_n$. Explicitly, $\mathbb{E}_n$ is obtained from $\rE_n$ via the composite functor
	$$ \Op_*(\Set_\Delta) \x{\textbf{k}[-]}{\lrar} \Op_*(\sMod(\textbf{k})) \x{\sN}{\lrar} \Op_*(\C(\textbf{k})) $$
where the first functor is defined by applying the free functor $\textbf{k}[-] : \Set_\Delta \lrar \sMod(\textbf{k})$ levelwise, and  $\sN$ is defined by applying the \textbf{normalization} (or \textbf{normalized chain complex}) \textbf{functor} levelwise (cf., e.g. \cite{Hoang2}). 
\end{notn}

Notice that $\mathbb{E}_n$ is adequate (cf. Definition \ref{d:ade}), and hence, by Theorem \ref{t:cotan} its cotangent complex is represented by $\ovl{\rL}_{\mathbb{E}_n}[-1] \in \IbMod(\mathbb{E}_n)$. The main goal of this section is to prove the following result.

\begin{thm}\label{t:QprinEn} There is a cofiber sequence in $\IbMod(\mathbb{E}_n)$ of the form
\begin{equation}\label{eq:QprinEn}
	  \Free_{\mathbb{E}_n}^{\si}(\mathcal{E}_*) \lrar \mathbb{E}_n^{\si} \lrar \ovl{\rL}_{\mathbb{E}_n}[n]
\end{equation}
in which the first map is induced by the identification $\textbf{k} \overset{\cong}{\lrar} \mathbb{E}_n(0)$ (see  Remark \ref{r:inffr} and $\S$\ref{s:other} for notations). 
\end{thm}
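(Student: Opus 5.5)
The plan is to deduce the dg statement from the known simplicial/topological \emph{Quillen principle for $\E_n$-operads} of \cite{Hoang}, rather than attack it by hand. That result gives a cofiber sequence in the tangent category at $\rE_n$ (equivalently in infinitesimal $\rE_n$-bimodules in spectra),
\[
\Sigma^\infty_+ \Free^{\si}_{\rE_n}(\mathcal{E}_*) \lrar \Sigma^\infty_+\rE_n^{\si} \lrar \rL_{\rE_n}[n+1].
\]
The approach is to transport this sequence along linearization $\textbf{k}[-]$ followed by normalization $\sN$. Since the cotangent complex of $\mathbb{E}_n$ is $\ovl{\rL}_{\mathbb{E}_n}[-1]$ by Theorem \ref{t:cotan}, the shift $[n+1]$ becomes $\ovl{\rL}_{\mathbb{E}_n}[n]$, which matches \eqref{eq:QprinEn}.

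First I will set up the comparison functor. Applying $\sN\textbf{k}[-]$ levelwise gives a functor $\Fun(\textbf{Ib}^{\rE_n},\Set_\Delta)\to\Fun(\textbf{Ib}^{\mathbb{E}_n},\C(\textbf{k}))$. Stabilizing it gives a left Quillen functor from $\T_{\rE_n}\Op(\Set_\Delta)\simeq \Sp(\IbMod(\rE_n))$ to $\IbMod(\mathbb{E}_n)$. On representables it sends $\Sigma^\infty_+$ of the free object to the free infinitesimal $\mathbb{E}_n$-bimodule, because the free functor is a left Kan extension along $\underline{0}$ (Example \ref{ex:inffree1}) and $\sN\textbf{k}[-]$ commutes with colimits up to weak equivalence.

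Next I will check the images of the three terms. The first term goes to $\Free^{\si}_{\mathbb{E}_n}(\mathcal{E}_*)$, since $\textbf{k}[\rE_n(0)]=\textbf{k}$. The middle term goes to $\mathbb{E}_n^{\si}$. For the third term I need that linearization carries the cotangent complex of $\rE_n$ to that of $\mathbb{E}_n$. Linearization is left Quillen on operads and commutes with $\Sigma^\infty_+$ and with the stabilization of $\Op_{/\rE_n}\to\Op_{/\mathbb{E}_n}$, so $\rL_{\rE_n}$ maps to $\rL_{\mathbb{E}_n}$. Alternatively I can verify this directly from \eqref{eq:keyeq}: the simplicial cotangent complex is levelwise $\Sigma^\infty_+\rE_n(m)\wedge(\bigvee_m S^0)$, whose linearization is $\mathbb{E}_n(m)^{\oplus m}$, consistent with Lemma \ref{l:cotan}. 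An exact left adjoint between stable categories preserves cofiber sequences, so the image sequence is a cofiber sequence. Finally, the first map is identified as induced by $\textbf{k}\cong\mathbb{E}_n(0)$, because in the simplicial sequence it is induced by the point of $\rE_n(0)$.

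The main obstacle is the compatibility of cotangent complexes under base change from spectral/simplicial coefficients to $\C(\textbf{k})$. Both sides depend on the Quillen equivalences of Theorem \ref{t:keylem}(1), and these must commute with linearization up to natural equivalence. My first attempt is to argue entirely in $\IbMod$: both equivalences are built from $(-)\oplus\P^{\si}$ followed by $\Sigma^\infty$ and induction, and each of these visibly commutes with $\sN\textbf{k}[-]$. If this fails, the fallback is a direct chain-level construction. Take the operadic derivation $\mathbb{E}_n\to\ovl{\rL}_{\mathbb{E}_n}[n]$ given by the fundamental classes of the spheres $\rE_n(2)\simeq S^{n-1}$, form its cofiber, and compare homology arity by arity using $\sH_*(\mathbb{E}_n(m))$. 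This is the Poisson operad description, and the comparison amounts to a Euler-characteristic-type count of $\binom{m}{1}$ copies shifted by $n$.
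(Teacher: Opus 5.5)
\paragraph{Verdict.} Your route is genuinely different from the paper's: you import the simplicial Quillen principle of \cite{Hoang} and push it forward along $\sN\textbf{k}[-]$. It is viable in principle. However, the step you defer as the ``main obstacle'' carries all the content, and your sketch of it does not hold up.

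\paragraph{Problems with the transport step.}
On the simplicial side, the first equivalence of Theorem~\ref{t:keylem}(1), $\IbMod(\P)\simeq\T_{\P^{\si}}\IbMod(\P)$ via $(-)\oplus\P^{\si}$, has no counterpart, because $\Set_\Delta$ is not additive. The tangent category $\T_{\rE_n}\Op(\Set_\Delta)\simeq\T_{\rE_n^{\si}}\IbMod(\rE_n)$ consists of \emph{parametrized} spectra over $\rE_n^{\si}$. It is not $\Sp(\IbMod(\rE_n))$, nor ``infinitesimal bimodules in spectra''. So the comparison functor must be built by stabilizing $\textbf{k}[-]$ on $\IbMod(\rE_n)_{\rE_n//\rE_n}$, and you must then check that it intertwines the induction functors $\IbMod\to\BMod\to\Op_C\to\Op$ on both sides.

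On the dg side, $\sN$ is only lax monoidal (Eilenberg--Zilber). Levelwise normalization is therefore a right adjoint on operads and does not commute on the nose with coproducts $\P\sqcup B$, free operads, or the induction functors. The claim that ``linearization is left Quillen on operads'' holds for $\textbf{k}[-]$ but not for $\sN\textbf{k}[-]$.

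To conclude that $\rL_{\rE_n}\mapsto\rL_{\mathbb{E}_n}$ compatibly with the identifications with $\IbMod$, you need more. Normalization must induce equivalences of $\infty$-categories of ($\Sigma$-cofibrant) operads and of infinitesimal bimodules, compatible with induction, over an arbitrary commutative ring $\textbf{k}$. That is a substantial input not available in the paper; it does not ``visibly commute''. Your alternative check via \eqref{eq:keyeq} only matches the levels of the two objects, not the infinitesimal bimodule structure of Remark~\ref{r:stlp}.

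\paragraph{Why the fallback cannot close the gap.}
By the splitting \eqref{eq:split}, the map $\mathbb{E}_n(k+1)\to\mathbb{E}_n(k)$ given by inserting $\mu_0$ is split up to quasi-isomorphism. Hence the cofiber of $\Free^{\si}_{\mathbb{E}_n}(\mathcal{E}_*)\to\mathbb{E}_n^{\si}$ automatically has levels $\mathbb{E}_n(k)^{\oplus k}[n]$, and the map out of $\mathbb{E}_n(k)$ is levelwise null. Any arity-by-arity homology count is therefore vacuous. The whole theorem is the identification of the infinitesimal bimodule structure on this cofiber with that of $\ovl{\rL}_{\mathbb{E}_n}[n]$. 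Also, derivations $\mathbb{E}_n\to M$ classify maps $\Om_{\mathbb{E}_n}\to M$, not maps $\mathbb{E}_n^{\si}\to M$ in $\IbMod(\mathbb{E}_n)$.

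\paragraph{What the paper does instead.}
The paper handles this point directly in the dg setting. It introduces the auxiliary bimodule $M_n=\mathbb{E}_n^{\si}\otimes\underline{1}_*[\textbf{k}[n-1]]$, which receives genuine bimodule maps from both $\ovl{\rL}_{\mathbb{E}_n}[n-1]=\mathbb{E}_n^{\si}\otimes\ovl{\rL}^n_{\Com}[n-1]$ and $\Free^{\si}_{\mathbb{E}_n}(\mathcal{E}_*)$. Their cofibers are $Q_n\simeq K_n\oplus P_n$ and $P_n$ respectively (Lemmas~\ref{l:firstcof} and~\ref{le:secondcof}). Pasting the resulting homotopy coCartesian squares gives $\ovl{\rL}_{\mathbb{E}_n}[n-1]\to\Free^{\si}_{\mathbb{E}_n}(\mathcal{E}_*)\to\mathbb{E}_n^{\si}$. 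The splitting \eqref{eq:split} is used only to identify, levelwise, maps that are already bimodule maps.

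\paragraph{Comparison.}
If you supply the comparison theorem, your approach yields the more conceptual statement that the dg principle is literally the linearization of the topological one. The paper's argument is self-contained and avoids every change-of-base issue.
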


The proof will make use of the following auxiliary material.

\begin{cons}\label{con:M_n} According to Example \ref{ex:rightkan}, we consider the functor (or infinitesimal $\mathbb{E}_n$-bimodule) $$\underline{1}_*\left[\textbf{k}[n-1]\right] : \textbf{Ib}^{\mathbb{E}_n} \lrar \C(\textbf{k}), \, \underline{k} \; \mapsto \; \bigoplus_{i=0}^k \, T_k^i$$
in which $T_k^i := \Map_{\C(\textbf{k})}\left(\Map^{\rho_i}_{\textbf{Ib}^{\mathbb{E}_n}}(\underline{k},\underline{1}), \textbf{k}[n-1]\right)$ for each $i = 0,\cdots,k$. Notice that $\mathbb{E}_n(1) \simeq\mathbb{E}_n(0) \cong \textbf{k}$, and $\mathbb{E}_n(2) \simeq \sS^{n-1} \cong \textbf{k}\oplus\textbf{k}[n-1]$. Therefore, we further obtain 
$$  T_k^0  = \Map_{\C(\textbf{k})}(\mathbb{E}_n(2), \textbf{k}[n-1]) \simeq \sS^{n-1}  \;\;\; \text{and}  \;\;\; T_k^i  = \Map_{\C(\textbf{k})}(\mathbb{E}_n(1)^{\otimes 2}, \textbf{k}[n-1]) \simeq \textbf{k}[n-1]  $$
for $i = 1,\cdots,k$. Next, we denote by $M_n := \mathbb{E}_n^{\si} \, \otimes \, \underline{1}_*\left[\textbf{k}[n-1]\right]$, i.e., the levelwise tensor product of $\mathbb{E}_n^{\si}$ and $\underline{1}_*\left[\textbf{k}[n-1]\right]$. This  carries a canonical infinitesimal $\mathbb{E}_n$-bimodule structure induced by the diagonal action, as $\mathbb{E}_n$ arises from a simplicial operad. For each $k\geq0$, we have
\begin{align*}
	&M_n(k) = \bigoplus_{i=1}^k \, \left(\mathbb{E}_n(k)\otimes T_k^i\right) \, \oplus \, \left(\mathbb{E}_n(k)\otimes T_k^0\right) \\
	&\phantom{M_n(k)  {}} \simeq \, \underset{k}{\bigoplus}\,\mathbb{E}_n(k)[n-1]\, \oplus \, \left(\mathbb{E}_n(k) \oplus \mathbb{E}_n(k)[n-1]\right).
\end{align*}
\end{cons}

\begin{cons}\label{con:ovlM_n} Given $X\in\C(\textbf{k})$, following Example \ref{ex:rightkan1}, we consider the functor
$$\underline{0}_*[X] : \textbf{Ib}^{\mathbb{E}_n} \lrar \C(\textbf{k})  , \;\; \underline{k}  \mapsto \Map_{\C(\textbf{k})}(\mathbb{E}_n(1)\otimes\mathbb{E}_n(0)^{\otimes k}, \, X) \simeq X.$$
We regard $\underline{0}_*[X]$ as a \textit{weakly constant} \textit{functor}  $\textbf{Ib}^{\mathbb{E}_n} \lrar \C(\textbf{k})$ with  value $X$. For $X = \textbf{k}[n-1]$ and $X=\sS^{n-1}$,  set 
$$ P_n := \mathbb{E}_n^{\si} \otimes \, \underline{0}_*\left[\textbf{k}[n-1]\right]  \;\;\; \text{and} \;\;\; Q_n := \mathbb{E}_n^{\si} \otimes \, \underline{0}_*[\sS^{n-1}].$$
As in Construction \ref{con:M_n}, both of these carry the structure of an infinitesimal $\mathbb{E}_n$-bimodule.
\end{cons}

\begin{rem}\label{r:constant} Since $\mathbb{E}_n$ arises from a simplicial operad, we may consider the (strictly) constant functor  $\ovl{X} : \textbf{Ib}^{\mathbb{E}_n} \lrar \C(\textbf{k})$ with value $X$. The canonical map $\ovl{X} \lrar \underline{0}_*[X]$, classified by $\Id_X$, is a weak equivalence in $\IbMod(\mathbb{E}_n)$. We therefore obtain two weak equivalences in $\IbMod(\mathbb{E}_n)$:
	$$ \mathbb{E}_n^{\si}[n-1] =  \mathbb{E}_n^{\si}\otimes\ovl{\textbf{k}[n-1]}  \lrarsimeq P_n \;\;\;\;\; \text{and} \;\;\;\;\; \mathbb{E}_n^{\si} \oplus\mathbb{E}_n^{\si}[n-1] =  \mathbb{E}_n^{\si}\otimes\ovl{\sS^{n-1}}  \lrarsimeq Q_n.$$
\end{rem}

\begin{rem}\label{r:split} We let $\mu_0 \in \E_n(0)$ denote the unique nullary operation of $\E_n$, and consider the map $$ (-)\circ_1 \mu_0 : \rE_n(k+1) \lrar \rE_n(k).$$ Visually, this is obtained by deleting the first disc from each configuration of $k+1$ disjoint $n$-discs inside the unit $n$-disc, viewing such a configuration as a point of $\rE_n(k+1)$. The map $(-)\circ_1 \mu_0$ is a fiber bundle  with fiber homotopy equivalent to the $k$-fold wedge sum $ \underset{k}{\bigvee} \sS_{\st}^{n-1}$ of the topological $(n-1)$-sphere $\sS_{\st}^{n-1}$, and moreover, it splits after a single suspension:
	$$ \Sigma\rE_n(k+1) \, \simeq \, \Sigma\left(\rE_n(k) \times \underset{k}{\bigvee}\sS_{\st}^{n-1} \right).$$
In light of this, we obtain a canonical quasi-isomorphism:
\begin{equation}\label{eq:split}
\mathbb{E}_n(k+1) \, \simeq \, \underset{k}{\bigoplus}\,\mathbb{E}_n(k)[n-1]\, \oplus \, \mathbb{E}_n(k).
\end{equation}
\end{rem}

\begin{cons}\label{cons:split} This quasi-isomorphism can be described as follows. For $k=0$, it is given by the projection  $\mathbb{E}_n(1) \lrarsimeq \mathbb{E}_n(0) \cong \textbf{k}$. Assume now that $k\geq1$. Since $\mathbb{E}_n(2) \simeq \sS^{n-1}$, we can write
	$$ \underset{k}{\bigoplus}\,\mathbb{E}_n(k)[n-1]\, \oplus \, \mathbb{E}_n(k) \, \simeq \, (\mathbb{E}_n(k)\otimes\mathbb{E}_n(2))\underset{\mathbb{E}_n(k)}{\bigsqcup} \cdots \underset{\mathbb{E}_n(k)}{\bigsqcup}(\mathbb{E}_n(k)\otimes\mathbb{E}_n(2)) =: \mathfrak{B}_k$$
	where $\mathfrak{B}_k$ refers to the $k$-fold cofiber coproduct of the map $\mathbb{E}_n(k) \lrar \mathbb{E}_n(k)\otimes\mathbb{E}_n(2)$  induced by the inclusion $\textbf{k} \lrar \sS^{n-1} \simeq \mathbb{E}_n(2)$. We write $\id\in\rE_n(1)$ for the identity operation. For each $1 \leq i \leq k$, we define a map $\tau_i : \mathbb{E}_n(k+1) \lrar \mathbb{E}_n(k) \otimes \mathbb{E}_n(2)$ induced by the map
	$$  \rE_n(k+1) \lrar \rE_n(k) \times  \rE_n(2) , \; \nu  \mapsto (\nu\circ_1 \mu_0  , \, \nu\circ(\id,\mu_0,\cdots,\mu_0,\id,\mu_0,\cdots,\mu_0)) $$
	in which the latter $``\id$'' is placed in the $(i+1)$-th position. (The latter operation is visually obtained from $\nu$ by deleting all but the first and $(i+1)$-th discs.) The quasi-isomorphism \eqref{eq:split} is then modeled by the composite
	\begin{equation}\label{eq:split1}
\mathbb{E}_n(k+1) \; \overset{(\tau_1,\cdots,\tau_k)}{\lrar} \; \underset{k}{\bigoplus}\,\mathbb{E}_n(k)\otimes\mathbb{E}_n(2) \lrar \mathfrak{B}_k
	\end{equation}
	where the second map is the canonical projection. 
\end{cons}

\begin{lem}\label{l:firstcof} There is a cofiber sequence in $\IbMod(\mathbb{E}_n)$ of the form
	\begin{equation}\label{eq:firstcof}
		\ovl{\rL}_{\mathbb{E}_n}[n-1] \overset{\iota}{\lrar} M_n \overset{\rho}{\lrar} Q_n.
	\end{equation} 
\begin{proof} First, consider the functor (or infinitesimal $\Com$-bimodule) $\ovl{\rL}_{\Com} : \textbf{Ib}^{\Com} \lrar \C(\textbf{k})$ (see $\S$\ref{s:coeinfty}), and let $\ovl{\rL}^n_{\Com} : \textbf{Ib}^{\mathbb{E}_n} \lrar \C(\textbf{k})$ denote its restriction to $\textbf{Ib}^{\mathbb{E}_n}$. Then observe that $\ovl{\rL}_{\mathbb{E}_n} \cong \mathbb{E}_n^{\si} \otimes \ovl{\rL}^n_{\Com}$. By adjunction, we have a canonical map $$\ovl{\rL}^n_{\Com}[n-1] \lrar \underline{1}_*\left[\textbf{k}[n-1]\right]$$ classified by the identity on $\textbf{k}[n-1]$. Accordingly, the map $\iota$ is given by the  induced map
	$$ \iota : \ovl{\rL}_{\mathbb{E}_n}[n-1] = \mathbb{E}_n^{\si} \otimes \ovl{\rL}^n_{\Com}[n-1] \lrar \mathbb{E}_n^{\si} \otimes \underline{1}_*\left[\textbf{k}[n-1]\right] = M_n.$$
	Unwinding the definitions, at each level $k$ this map is weakly equivalent to the  embedding
	$$ \ovl{\rL}_{\mathbb{E}_n}[n-1](k) \simeq \underset{k}{\bigoplus}\,\mathbb{E}_n(k)[n-1] \; \lrar \; \underset{k}{\bigoplus}\,\mathbb{E}_n(k)[n-1]  \oplus  \left(\mathbb{E}_n(k) \oplus \mathbb{E}_n(k)[n-1]\right) \simeq M_n(k).$$
	
	Next, since $\underline{1}_*\left[\textbf{k}[n-1]\right]$ is given by $T_0^0$ at level $0$, we get a map $\underline{1}_*\left[\textbf{k}[n-1]\right] \lrar \underline{0}_*[\sS^{n-1}]$ classified by the identification $T_0^0 \simeq \sS^{n-1}$. Accordingly, the map $\rho$ is the induced map $$\rho : M_n = \mathbb{E}_n^{\si} \otimes \underline{1}_*\left[\textbf{k}[n-1]\right] \, \lrar \,  \mathbb{E}_n^{\si} \otimes \, \underline{0}_*[\sS^{n-1}] = Q_n.$$
	Moreover, at each level this map is (up to homotopy) given by the projection
	$$ M_n(k) \, \simeq \, \underset{k}{\bigoplus}\,\mathbb{E}_n(k)[n-1]\, \oplus \, \left(\mathbb{E}_n(k) \oplus \mathbb{E}_n(k)[n-1]\right) \; \lrar \; \mathbb{E}_n(k) \oplus \mathbb{E}_n(k)[n-1] \simeq Q_n(k).$$
	
	From the two paragraphs above, we indeed obtain the expected cofiber sequence \eqref{eq:firstcof}.
\end{proof}
\end{lem}

\begin{lem}\label{le:secondcof} There is a cofiber sequence in $\IbMod(\mathbb{E}_n)$ of the form
	\begin{equation}\label{eq:secondcof}
		\Free_{\mathbb{E}_n}^{\si}(\mathcal{E}_*) \overset{\varepsilon}{\lrar} M_n \overset{\pi}{\lrar} P_n.
	\end{equation} 
\begin{proof} First recall that $\Free_{\mathbb{E}_n}^{\si}(\mathcal{E}_*) \cong \underline{0}_![\textbf{k}]$ (cf. Example \ref{ex:inffree1}), and that $M_n(0) = \mathbb{E}_n(0)\otimes T_0^0$.  Let $\ovl{\varepsilon} : \textbf{k} \lrar M_n(0)$ be the map classified by a pair $(\mu_0, \kappa)$ in which $\mu_0$ represents the generator of $\mathbb{E}_n(0) \cong \textbf{k}$ (see Remark \ref{r:split}), and $\kappa$ denotes the projection $\mathbb{E}_n(2) \simeq \sS^{n-1} \lrar  \textbf{k}[n-1]$. (Therefore, $\ovl{\varepsilon}$ is  weakly equivalent to the inclusion $\textbf{k} \lrar \textbf{k}\oplus\textbf{k}[n-1]$.) Accordingly, $\varepsilon : \Free_{\mathbb{E}_n}^{\si}(\mathcal{E}_*) \lrar M_n$ is the map induced by $\ovl{\varepsilon}$ via adjunction, and at each level $k$ takes the form
	$$ \varepsilon(k)  : \Free_{\mathbb{E}_n}^{\si}(\mathcal{E}_*)(k) = \mathbb{E}_n(k+1) \; \lrar \; \bigoplus_{i=1}^k \, \left(\mathbb{E}_n(k)\otimes T_k^i\right) \, \oplus \, \left(\mathbb{E}_n(k)\otimes T_k^0\right) = M_n(k)$$
	(see also Remark \ref{r:inffr}). Concretely, the map $\varepsilon(k)$ consists, for each $0 \leq i \leq k$, of a component map  $\varepsilon_i(k) : \mathbb{E}_n(k+1) \lrar \mathbb{E}_n(k)\otimes T_k^i$, which is defined through the following steps:
	
	\begin{enumerate}
		
		\item First,  let $\upsilon_0 : \mathbb{E}_n(k+1) \lrar T_k^0$ be the map classified by the composite
		$$ \mathbb{E}_n(k+1)\otimes \mathbb{E}_n(2) \lrar \mathbb{E}_n(2) \overset{\kappa}{\lrar} \textbf{k}[n-1]$$
		where the first map is given by the rule $$\rE_n(k+1) \times \rE_n(2) \; \ni \; (\nu, \mu) \; \mapsto \; \mu \circ_1\nu \circ (\id,\mu_0,\cdots,\mu_0) \; \in \; \rE_n(2).$$
		
		\item Next, for $1 \leq i \leq k$, let $\upsilon_i : \mathbb{E}_n(k+1) \lrar T_k^i$ be the map induced by 
		$$ \mathbb{E}_n(k+1) \otimes \mathbb{E}_n(1)^{\otimes 2} \lrar \mathbb{E}_n(2) \overset{\kappa}{\lrar} \textbf{k}[n-1]$$
		in which the first map is determined by 
		$$\rE_n(k+1) \times \rE_n(1) \times \rE_n(1) \; \ni \; (\nu, \alpha, \beta) \; \mapsto \; \alpha \circ \nu \circ (\id,\mu_0,\cdots,\mu_0,\beta,\mu_0,\cdots,\mu_0) \; \in \; \rE_n(2)$$ 
		such that $\beta$ is placed in the $(i+1)$-th position.
		
		\item Finally, for each $0 \leq i \leq k$, the map  $\varepsilon_i(k)$ is the composed map
		$$  \mathbb{E}_n(k+1) \; \lrar \; \mathbb{E}_n(k+1)\otimes\mathbb{E}_n(k+1) \; \overset{(-\,\circ_1 \mu_0) \otimes \upsilon_i}{\lrar} \; \mathbb{E}_n(k)\otimes T_k^i $$
		where the first map is induced by the diagonal map.
		
	\end{enumerate}

	 Clearly $\varepsilon_0(k)$ maps $\mathbb{E}_n(k+1)$ onto the summand $\mathbb{E}_n(k) \subseteq \mathbb{E}_n(k)\otimes T_k^0$ via the map $(-)\circ_1 \mu_0$, and furthermore, one can show that the corestriction of $\varepsilon(k)$ onto the complement of the summand $\mathbb{E}_n(k)[n-1] \subseteq \mathbb{E}_n(k)\otimes T_k^0$ in $M_n(k)$ is weakly equivalent to the map \eqref{eq:split1}. It implies that $\varepsilon(k)$ is weakly equivalent to the inclusion
	$$ \underset{k}{\bigoplus}\,\mathbb{E}_n(k)[n-1]\, \oplus \, \mathbb{E}_n(k) \lrar  \underset{k}{\bigoplus}\,\mathbb{E}_n(k)[n-1]\, \oplus \, \left(\mathbb{E}_n(k) \oplus \mathbb{E}_n(k)[n-1]\right).$$

	On the other hand, the map $\pi : M_n \lrar P_n$ is given by the composite
	$$ M_n  \overset{\rho}{\lrar}  Q_n \lrar P_n.$$
	Here, the second map is induced by the map $\underline{0}_*\left[\sS^{n-1}\right] \lrar \underline{0}_*\left[\textbf{k}[n-1]\right]$, which arises from the projection $\sS^{n-1} \lrar \textbf{k}[n-1]$. We can verify that $\pi$ is (up to homotopy) given at each level by the projection onto the distinguished summand $\mathbb{E}_n(k)[n-1] \subseteq \mathbb{E}_n(k)\otimes T_k^0$ : 
	$$ M_n(k) \, \simeq \, \underset{k}{\bigoplus}\,\mathbb{E}_n(k)[n-1]\, \oplus \, \left(\mathbb{E}_n(k) \oplus \mathbb{E}_n(k)[n-1]\right) \; \lrar \; \mathbb{E}_n(k)[n-1] \simeq P_n(k).$$   
	Together with the previous paragraph, this yields the cofiber sequence \eqref{eq:secondcof}.
\end{proof}
\end{lem}

\begin{proof}[\underline{Proof of Theorem \ref{t:QprinEn}}] We denote by $K_n := \mathbb{E}_n^{\si} \otimes \, \underline{0}_*\left[\textbf{k}\right]$, and consider it as an infinitesimal $\mathbb{E}_n$-bimodule (see Construction \ref{con:ovlM_n}). As in Remark \ref{r:constant}, there is a weak equivalence $\mathbb{E}_n^{\si} \lrarsimeq K_n$ induced by the map $\ovl{\textbf{k}} \lrarsimeq \underline{0}_*[\textbf{k}]$. Moreover, since $\sS^{n-1} \cong \textbf{k} \oplus \textbf{k}[n-1]$, we have $Q_n \simeq K_n \oplus P_n$. Combining this with the cofiber sequences  \eqref{eq:firstcof} and \eqref{eq:secondcof}, we obtain a diagram of homotopy (co)Cartesian squares in $\IbMod(\mathbb{E}_n)$:
$$ \xymatrix{
	\ovl{\rL}_{\mathbb{E}_n}[n-1] \ar[r]\ar[d] &	\Free_{\mathbb{E}_n}^{\si}(\mathcal{E}_*) \ar[r]^{ \;\;\;\;\;\;\;\; \varepsilon}\ar[d] & M_n \ar[d]^\rho \\
	0 \ar[r] &	K_n \ar[r] & Q_n \\
}$$
in which the top horizontal composed map is weakly equivalent to $\ovl{\rL}_{\mathbb{E}_n}[n-1] \overset{\iota}{\lrar} M_n$, and the middle vertical arrow is given by the composite $\Free_{\mathbb{E}_n}^{\si}(\mathcal{E}_*) \lrar \mathbb{E}_n^{\si} \lrarsimeq K_n$. Accordingly, we get a cofiber sequence of the form
\begin{equation}\label{eq:phi}
	\ovl{\rL}_{\mathbb{E}_n}[n-1] \lrar \Free_{\mathbb{E}_n}^{\si}(\mathcal{E}_*) \lrar \mathbb{E}_n^{\si},
\end{equation}
from which the proof is completed. 
\end{proof}

\begin{notn}\label{no:varphi} We denote by $\varphi : \ovl{\rL}_{\mathbb{E}_n}[n-1] \lrar \Free_{\mathbb{E}_n}^{\si}(\mathcal{E}_*)$ the map that emerged in the cofiber sequence \eqref{eq:phi}.
\end{notn}

\begin{rem} At each level $k$, the map $\varphi$ is weakly equivalent to the inclusion
	$$ \ovl{\rL}_{\mathbb{E}_n}[n-1](k) \, \simeq \, \underset{k}{\bigoplus}\,\mathbb{E}_n(k)[n-1] \; \lrar \; \underset{k}{\bigoplus}\,\mathbb{E}_n(k)[n-1]  \oplus  \mathbb{E}_n(k) \, \simeq \, \Free_{\mathbb{E}_n}^{\si}(\mathcal{E}_*)(k).$$
\end{rem}

\begin{rem} The cofiber sequence \eqref{eq:QprinEn} is the dg analogue of the so-called \textit{Quillen principle for topological $\E_n$-operads} (cf. \cite[$\S$6.3]{Hoang} and \cite[$\S$6.3]{Hoang1}).
\end{rem}

As in \cite[Example 5.10]{Hoang1}, $\mathbb{E}_n^{\si} \in \IbMod(\mathbb{E}_n)$ represents the \textbf{Hochschild complex} of the operad $\mathbb{E}_n$, and for $M\in\IbMod(\mathbb{E}_n)$, the space  $$\HHH^\star(\mathbb{E}_n ; M) := \Map^{\h}_{\IbMod(\mathbb{E}_n)}(\mathbb{E}_n^{\si}, M)$$  is the \textbf{Hochschild cohomology} of $\mathbb{E}_n$ with coefficients in $M$. The following is an immediate  consequence of Theorem \ref{t:QprinEn}.

\begin{cor}\label{co:QprinEn} Suppose given an object $M\in\IbMod(\mathbb{E}_n)$. Then there is a fiber sequence of spaces of the form
	$$ \Om^{n+1}\HHQ^\star(\mathbb{E}_n ; M) \lrar \HHH^\star(\mathbb{E}_n ; M) \lrar |M(0)|$$
(see $\S$\ref{s:other}(i) for notation). Consequently, there is a long exact sequence of $\textbf{k}$-modules:
$$   \cdots \lrar \HHQ^{-k-n-1}(\mathbb{E}_n ; M) \lrar \HHH^{-k}(\mathbb{E}_n ; M) \lrar \sH_k(M(0))    $$ 
$$ \lrar \HHQ^{-k-n}(\mathbb{E}_n ; M) \lrar \HHH^{-k+1}(\mathbb{E}_n ; M) \lrar \sH_{k-1}(M(0)) \lrar \cdots \, .$$
\end{cor}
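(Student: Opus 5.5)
We apply the exact functor $\Map^{\h}_{\IbMod(\mathbb{E}_n)}(-,M)$ to the cofiber sequence \eqref{eq:QprinEn} of Theorem~\ref{t:QprinEn},
$$\Free_{\mathbb{E}_n}^{\si}(\mathcal{E}_*) \lrar \mathbb{E}_n^{\si} \lrar \ovl{\rL}_{\mathbb{E}_n}[n].$$
Since $\IbMod(\mathbb{E}_n)$ is a stable model category, derived mapping spaces out of a homotopy cofiber sequence into a fixed target form a homotopy fiber sequence of spaces:
$$\Map^{\h}(\ovl{\rL}_{\mathbb{E}_n}[n],M) \lrar \Map^{\h}(\mathbb{E}_n^{\si},M) \lrar \Map^{\h}(\Free_{\mathbb{E}_n}^{\si}(\mathcal{E}_*),M).$$
It remains to identify the three terms.

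The middle term is $\HHH^\star(\mathbb{E}_n;M)$ by definition. For the right-hand term, $\Free_{\mathbb{E}_n}^{\si}$ is left Quillen (the forgetful functor preserves fibrations and weak equivalences), and $\mathcal{E}_*$ is cofibrant in $\Sigma_*(\C(\textbf{k}))$ because it is $\textbf{k}$ concentrated in arity $0$ with trivial action. The derived adjunction therefore gives
$$\Map^{\h}(\Free^{\si}_{\mathbb{E}_n}(\mathcal{E}_*),M)\simeq \Map^{\h}_{\C(\textbf{k})}(\textbf{k},M(0))=|M(0)|.$$
Alternatively, one can use $\Free^{\si}_{\mathbb{E}_n}(\mathcal{E}_*)\cong \underline{0}_![\textbf{k}]$ from Example~\ref{ex:inffree1} together with the enriched Yoneda lemma.

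For the left-hand term, Theorem~\ref{t:cotan}(2) gives $\HHQ^\star(\mathbb{E}_n;M)\simeq \Map^{\h}(\ovl{\rL}_{\mathbb{E}_n}[-1],M)$, since $\mathbb{E}_n$ is adequate. Stability then yields
$$\Map^{\h}(X[k],M)\simeq \Omega^k\Map^{\h}(X,M)$$
for $k\ge 0$, because a suspension in a stable category is the pushout $0\leftarrow X\to 0$, which the mapping space converts into a loop space. Taking $X=\ovl{\rL}_{\mathbb{E}_n}[-1]$ and $k=n+1$ gives
$$\Map^{\h}(\ovl{\rL}_{\mathbb{E}_n}[n],M)\simeq\Omega^{n+1}\HHQ^\star(\mathbb{E}_n;M),$$
which completes the fiber sequence.

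The long exact sequence comes from the long exact sequence of homotopy groups of this fiber sequence. Since all terms are underlying spaces of mapping spectra, we may extend to negative homotopy by replacing $M$ with $M[j]$, so it is really the long exact sequence in cohomology associated with a cofiber sequence in a stable category. The terms are identified as follows:
\begin{itemize}
\item $\pi_k$ of the base is $\pi_0\Map^{\h}(\textbf{k}[k],M(0))=\sH_k(M(0))$;
\item $\pi_k\HHH^\star(\mathbb{E}_n;M)=\pi_0\Map^{\h}(\mathbb{E}_n^{\si},M[-k])=\HHH^{-k}(\mathbb{E}_n;M)$;
\item $\pi_k\Omega^{n+1}\HHQ^\star=\pi_0\Map^{\h}(\rL_{\mathbb{E}_n},M[-k-n-1])=\HHQ^{-k-n-1}(\mathbb{E}_n;M)$.
\end{itemize}
The connecting map $\sH_k(M(0))\to\HHQ^{-k-n}(\mathbb{E}_n;M)$ fits as displayed.

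The only point needing care is the identification of the right-hand term: we must check that the derived free–forgetful adjunction applies, which requires $\mathcal{E}_*$ to be cofibrant and the evaluation at arity $0$ to be right Quillen. Both hold in the projective transferred model structure on $\IbMod(\mathbb{E}_n)$ (see Remark~\ref{r:AA}), since $\mathbb{E}_n$ is levelwise cofibrant.
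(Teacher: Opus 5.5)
Your proposal is correct and follows the same route as the paper, which calls this an immediate consequence of Theorem~\ref{t:QprinEn}. You map the cofiber sequence \eqref{eq:QprinEn} into $M$. You identify $\Map^{\h}(\Free_{\mathbb{E}_n}^{\si}(\mathcal{E}_*),M)\simeq|M(0)|$ by the free--forgetful adjunction, and $\Map^{\h}(\ovl{\rL}_{\mathbb{E}_n}[n],M)\simeq\Om^{n+1}\HHQ^\star(\mathbb{E}_n;M)$ via Theorem~\ref{t:cotan} and stability. The long exact sequence then follows from homotopy groups, with your spectrum-level remark correctly covering the negative range.
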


The statement below provides an interesting example. Recall that for each integer $m\geq n$, there is an embedding $\mathbb{E}_n \lrar \mathbb{E}_m$ of dg operads, which endows $\mathbb{E}_m$ with the structure of an infinitesimal $\mathbb{E}_n$-bimodule.

\begin{cor}\label{co:QprinEnm} For each integer $k \neq 0$, there is a canonical isomorphism 
	$$  \HHQ^{-k-n-1}(\mathbb{E}_n ; \mathbb{E}_m) \cong \HHH^{-k}(\mathbb{E}_n ; \mathbb{E}_m).$$
Moreover, the zeroth Hochschild cohomology group factors as
$$ \HHH^{0}(\mathbb{E}_n ; \mathbb{E}_m) \cong \HHQ^{-n-1}(\mathbb{E}_n ; \mathbb{E}_m) \oplus \textbf{k}.$$
\begin{proof} Note first that $\sH_\bullet(\mathbb{E}_m(0)) \cong \textbf{k}$, concentrated in degree $0$. Together with the long exact sequence of Corollary \ref{co:QprinEn}, this yields $$\HHQ^{-k-n-1}(\mathbb{E}_n ; \mathbb{E}_m) \cong \HHH^{-k}(\mathbb{E}_n ; \mathbb{E}_m)$$ for all $k \in \ZZ\setminus\{-1,0\}$. Moreover, we obtain an exact sequence of $\textbf{k}$-modules:
\begin{equation}\label{eq:QprinEnm}
0 \lrar  \HHQ^{-n-1}(\mathbb{E}_n ; \mathbb{E}_m) \lrar \HHH^{0}(\mathbb{E}_n ; \mathbb{E}_m) \lrar \textbf{k} \x{\ovl{\varphi}}{\lrar} \HHQ^{-n}(\mathbb{E}_n ; \mathbb{E}_m) \lrar \HHH^{1}(\mathbb{E}_n ; \mathbb{E}_m) \lrar 0
\end{equation}
where $\ovl{\varphi}$ is the map $$\textbf{k} \cong \pi_0\Map^{\h}_{\IbMod(\mathbb{E}_n)}(\Free_{\mathbb{E}_n}^{\si}(\mathcal{E}_*),\mathbb{E}_m) \x{\ovl{\varphi}}{\lrar} \pi_0\Map^{\h}_{\IbMod(\mathbb{E}_n)}(\ovl{\rL}_{\mathbb{E}_n}[n-1],\mathbb{E}_m) \cong \HHQ^{-n}(\mathbb{E}_n ; \mathbb{E}_m)$$
induced by $\varphi : \ovl{\rL}_{\mathbb{E}_n}[n-1] \lrar \Free_{\mathbb{E}_n}^{\si}(\mathcal{E}_*)$ (see Notation \ref{no:varphi}). Hence, by the exactness of \eqref{eq:QprinEnm}, it suffices to prove that $\ovl{\varphi}$ is trivial.

We write $\xi_m : \Free_{\mathbb{E}_n}^{\si}(\mathcal{E}_*) \lrar \mathbb{E}_m$ for the map induced by the identification $\textbf{k} \x{\cong}{\lrar} \mathbb{E}_m(0)$, and let $\xi_m$ represent the generator of the domain of $\ovl{\varphi}$. Thus, $\ovl{\varphi}$ is classified by the composition $\xi_m\circ\varphi$. Moreover, observe that $\xi_m$ coincides with the composed map
$$ \Free_{\mathbb{E}_n}^{\si}(\mathcal{E}_*) \x{\xi_n}{\lrar} \mathbb{E}_n \lrar \mathbb{E}_m,$$
and hence, $\xi_m\circ\varphi$ is the same as the composed map
$ \ovl{\rL}_{\mathbb{E}_n}[n-1] \x{\varphi}{\lrar} \Free_{\mathbb{E}_n}^{\si}(\mathcal{E}_*) \x{\xi_n}{\lrar} \mathbb{E}_n \lrar \mathbb{E}_m$. But $\xi_n\circ\varphi$ is weakly equivalent to the zero map (as shown in the proof of Theorem \ref{t:QprinEn}). Accordingly, $\xi_m\circ\varphi$ indeed represents the trivial class in the codomain of $\ovl{\varphi}$. The proof is therefore completed.
\end{proof}
\end{cor}

\section{Deformation theory and Quillen cohomology}\label{s:defandQcohom}

Our main purpose in this section is to establish a precise relationship between \textbf{deformation theory} and Quillen cohomology. We illustrate the main theorem by examining the case of dg ($\E_n$-)operads.

\subsection{From formal moduli contexts to formal moduli problems}\label{sub:fmc}

We introduce the notion of a \textbf{formal moduli context}, designed to capture \textbf{formal moduli problems} associated with the \textbf{deformation spaces} of objects in a model category of interest.

We will use the following notations and conventions.

\begin{enumerate}\label{enu:defQ}
	
	\item Throughout this section, we assume that $\textbf{k}$ is a field of characteristic $0$.
	
	\item We denote by $\CAlg^{\aug} := \Alg^{\aug}_{\Com}(\C_{\geq 0}(\textbf{k}))$ the category of augmented commutative algebras in $\C_{\geq 0}(\textbf{k})$. Explicitly, an object of $\CAlg^{\aug}$ is a commutative dg $\textbf{k}$-algebra $R$, concentrated in non-negative degrees and equipped with a map $R \lrar \textbf{k}$ of dg $\textbf{k}$-algebras.
	
	\item We endow the category $\CAlg^{\aug}$ with the transferred model structure (see $\S$\ref{s:bases}). This is a pointed model category with zero object $\textbf{k}$ itself. We then consider the stabilization $\Sp(\CAlg^{\aug})$, which exists as a semi-model category.   
	
	\item For $R \in \CAlg^{\aug}$ and $n\in\NN$, we let $\pi_n(R)$ denote the $n$-th homology group of the underlying complex of $R$. 
	
	\item By convention, an augmented commutative algebra $\eps:R \lrar \textbf{k}$ is  \textbf{artinian} if the underlying complex of $R$ is finite dimensional and the map $\pi_0(\eps):\pi_0(R) \lrar \pi_0(\textbf{k}) = \textbf{k}$ exhibits $\pi_0(R)$ as a \textit{local} $\textbf{k}$-\textit{algebra} (i.e., the kernel of $\pi_0(\eps)$ is the unique maximal ideal of $\pi_0(R)$).
	
	\item We will denote by $\CAlg^{\art} \subseteq \CAlg^{\aug}$ the full subcategory spanned by artinian algebras.
	
	\item Furthermore, we abuse the notation to write $\Sp(\CAlg^{\art})$ for the full subcategory of $\Sp(\CAlg^{\aug})$ spanned by the spectrum objects $X : \NN \times \NN \lrar \CAlg^{\aug}$ such that $X_{m,n}$ is artinian for every pair $(m,n) \in \NN \times \NN$. 
	
	\item We let $\ModCat$ denote the category whose objects are model categories and whose morphisms are Quillen adjunctions with the sources and targets being those of the left Quillen functors.  
	
	\item We are interested in functors $\F : \CAlg^{\art} \lrar \ModCat$. For each map $f:R\lrar S$ in $\CAlg^{\art}$, we denote by $f_! : \adjunction*{}{\F(R)}{\F(S)}{} : f^{*}$  the image of $f$ under the functor $\F$.

	\item Finally, we denote by $\textbf{S} := (\Set_\Delta)_\infty$ the $\infty$-category of spaces and by $\CAlg^{\art}_\infty$ the full $\infty$-subcategory of $\CAlg^{\aug}_\infty$ spanned by artinian $\textbf{k}$-algebras.
\end{enumerate}

Let us first recall the following definition from \cite{Luriefmp}.

\begin{dfn}\label{d:fmp} A \textbf{formal moduli problem} is a functor $\G:\CAlg^{\art}_\infty\longrightarrow \textbf{S}$ such that the space $\G(\textbf{k})$ is contractible, and if $\sigma$ is a Cartesian square in $\CAlg^{\art}_\infty$ of the form
	$$ \xymatrix{
		R \ar[r]\ar[d] & S \ar[d] \\
		T \ar[r] & U \\
	}$$
	with the induced maps $\pi_0(S) \longrightarrow \pi_0(U)$ and $\pi_0(T) \longrightarrow \pi_0(U)$ surjective, then $\G(\sigma)$ is a Cartesian square in $\textbf{S}$ as well.
\end{dfn}

We arrive at the expected definitions.

\begin{define}\label{d:moduli}
	A \textbf{formal moduli context} is a functor $\F:\CAlg^{\art} \lrar \ModCat$ satisfying the following properties: 
	\begin{enumerate}[(1)]
		\item
		$\F$ sends weak equivalences in $\CAlg^{\art}$ to Quillen equivalences, and for every morphism $f: R \lrar S$ in $\CAlg^{\art}$ the right adjoint $f^*: \F(S) \lrar \F(R)$ preserves weak equivalences.
		\item
		For every homotopy Cartesian square
		$$ \xymatrix{
			R \ar[r]\ar[d] & S \ar[d] \\
			T \ar[r] & U \\
		}$$
		in $\CAlg^{\art}$ such that the maps $\pi_0(S) \lrar \pi_0(U)$ and $\pi_0(T) \lrar \pi_0(U)$ are surjective, the corresponding diagram
		$$ \xymatrix{
			\F(R) \ar[r]\ar[d] & \F(S) \ar[d] \\
			\F(T) \ar[r] & \F(U) \\
		}$$
		of model categories is homotopy Cartesian (in the sense of Definition~\ref{d:car}).
	\end{enumerate}
	
\end{define}

We will see interesting examples of formal moduli contexts in $\S$\ref{sub:examples}. For now, let us proceed with the abstract theory.


\begin{define}\label{d:defor}
	Suppose given a formal moduli context $\F: \CAlg^{\art} \lrar \ModCat$. Let $X \in \F(\textbf{k})$ be a fibrant object, and $R \in\CAlg^{\art}$ an artinian $\textbf{k}$-algebra with augmentation $\eps: R \lrar \textbf{k}$. 
	\begin{enumerate}
		\item A \textbf{deformation of $X$ over} $R$ is a pair $(Y,\eta)$ with $Y\in\F(R)$ cofibrant and $\eta: \eps_!Y \lrarsimeq X$ a weak equivalence in $\F(\textbf{k})$. 
		
		\item Given two deformations $(Y,\eta)$ and $(Y',\eta')$ of $X$ over $R$, an \textbf{equivalence} from $(Y,\eta)$ to $(Y',\eta')$ is a weak equivalence $Y \lrarsimeq Y'$ in $\F(R)$ that is compatible with the structure maps $\eta$ and $\eta'$ in the evident way.
		
		\item The \textbf{space of deformations of $X$ over $R$}, denoted $\Def(X,R)$, is the Kan replacement of the nerve of the category whose
		objects are deformations of $X$ over $R$ and whose morphisms are equivalences of deformations. We regard $\Def(X,R)$ as a pointed space, with base point given by the \textbf{trivial deformation} $(u_!X,\eta) \in \Def(X,R)$ where $u: \textbf{k} \lrar R$ is the unit of $R$ and $\eta: \eps_!u_!X \x{\cong}{\lrar} X$ is the canonical isomorphism.
	\end{enumerate}
\end{define}

\begin{notn}\label{no:maxi} For an $\infty$-category $\C$, we denote by $\C^{\simeq}$ the \textbf{maximal $\infty$-subgroupoid} of $\C$. It is known that the assignment $\C \mapsto \C^{\simeq}$ determines an $\infty$-categorical right adjoint from $\infty$-categories to $\infty$-groupoids (cf. \cite{Luriehtt}).
\end{notn}

\begin{rem}\label{r:defor} Under the setting of Definition \ref{d:defor}, let $\widetilde{\eps}_! : \F(R)_{\infty}^{\simeq} \lrar \F(\textbf{k})_{\infty}^{\simeq}$  denote the map induced by $\eps: R \lrar \textbf{k}$. The space $\Def(X,R)$ can then be identified with the homotopy pullback
	$$ (\F(R)_{\infty}^{\simeq})_{/X} := \F(R)_{\infty}^{\simeq} \times^{\h}_{\F(\textbf{k})_{\infty}^{\simeq}} (\F(\textbf{k})_{\infty}^{\simeq})_{/X}.$$
(This follows from the fact that section model categories have the right type, as discussed in Appendix~\ref{s:hocartmodel}.) In particular, $\Def(X,R)$ is weakly equivalent to the homotopy fiber over $X$ of the map $\widetilde{\eps}_! : \F(R)_{\infty}^{\simeq} \lrar \F(\textbf{k})_{\infty}^{\simeq}$. Combining this with Definition \ref{d:moduli}(1), we deduce that the assignment $R \mapsto \Def(X,R)$ determines an  $\infty$-functor  $\CAlg^{\art}_\infty\longrightarrow \textbf{S}$. 
\end{rem}

The main interest in the notion of a formal moduli context is that it naturally gives rise to formal moduli problems.
\begin{prop}\label{p:moduli}
	Let $\F: \CAlg^{\art} \lrar \ModCat$ be a formal moduli context and let $X \in \F(\textbf{k})$ be a fibrant object. Then the functor $$ \Def(X,-) : \CAlg^{\art}_\infty\longrightarrow \textbf{S}, \,  R \mapsto \Def(X,R) $$ is a formal moduli problem.
\end{prop}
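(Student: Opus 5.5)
We verify the two axioms of Definition~\ref{d:fmp} by working with the description of Remark~\ref{r:defor}: for each $R\in\CAlg^{\art}$,
$$\Def(X,R)\simeq \mathrm{hofib}_X\big(\widetilde{\eps}_! : \F(R)_\infty^{\simeq}\lrar \F(\textbf{k})_\infty^{\simeq}\big).$$
Functoriality in $R$ has already been established there, using Definition~\ref{d:moduli}(1). So it remains to prove two things: contractibility at $R=\textbf{k}$, and the pullback condition for small Cartesian squares.

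\textbf{Contractibility.} For $R=\textbf{k}$ the augmentation $\eps$ is the identity, so $\eps_!$ is the identity functor and $\widetilde{\eps}_!$ is the identity of $\F(\textbf{k})_\infty^{\simeq}$. Hence $\Def(X,\textbf{k})$ is the homotopy fiber of an identity map, which is contractible. Concretely, $\Def(X,\textbf{k})$ is the nerve of the category of cofibrant objects weakly equivalent to $X$ over $X$, and this category has a homotopy-terminal object, namely a cofibrant replacement of $X$.

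\textbf{Pullback condition.} Take a homotopy Cartesian square $\sigma$ in $\CAlg^{\art}$ as in Definition~\ref{d:fmp}, with $\pi_0(S)\to\pi_0(U)$ and $\pi_0(T)\to\pi_0(U)$ surjective. Any Cartesian square in $\CAlg^{\art}_\infty$ can be rectified to such a strict square, so it suffices to treat these.
\begin{enumerate}
\item By Definition~\ref{d:moduli}(2), the square of model categories $\F(\sigma)$ is homotopy Cartesian in the sense of Definition~\ref{d:car}. We take this (via the appendix on section model categories, Appendix~\ref{s:hocartmodel}) to mean that the induced functor of underlying $\infty$-categories
$$\F(R)_\infty\lrar \F(S)_\infty\times_{\F(U)_\infty}\F(T)_\infty$$
is an equivalence.
\item Passing to maximal subgroupoids preserves this, because $(-)^{\simeq}$ is a right adjoint (Notation~\ref{no:maxi}) and so preserves limits. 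This yields a Cartesian square of spaces $\F(\sigma)_\infty^{\simeq}$.
\item All four algebras are augmented compatibly over $\textbf{k}$, so this square maps to the constant square on $\F(\textbf{k})_\infty^{\simeq}$. Taking homotopy fibers over $X$ commutes with homotopy limits, and the homotopy fiber of the constant square is a constant (hence Cartesian) square. Therefore the square $\Def(X,\sigma)$ is Cartesian.
\end{enumerate}

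\textbf{Main obstacle.} The hard part is the identification in step 1: that "homotopy Cartesian" in the sense of Definition~\ref{d:car} really yields an equivalence of underlying $\infty$-categories onto the homotopy pullback. Only with that equivalence in hand can we pass to maximal subgroupoids. My plan is to use the model of the homotopy pullback as the section model category of the cospan $\F(S)\to\F(U)\leftarrow\F(T)$, whose underlying $\infty$-category is the $\infty$-categorical pullback (this is where the appendix enters). Two further points must be checked:
\begin{itemize}
\item The homotopy fibers must be taken at the correct base point: the trivial deformation $u_!X$ maps to the trivial deformation under each $f_!$, up to the canonical isomorphisms.
\item Homotopy invariance requires that $f^*$ preserves weak equivalences (Definition~\ref{d:moduli}(1)), so that the derived functors compose correctly.
\end{itemize}
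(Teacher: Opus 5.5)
Your proposal is correct and follows the paper's route. Contractibility comes from the homotopy-fiber description of Remark~\ref{r:defor}. For the gluing condition, Definition~\ref{d:moduli}(2) together with Remark~\ref{r:right} gives a Cartesian square of underlying $\infty$-categories; you then pass to maximal subgroupoids and commute the homotopy fibers over $X$ with homotopy limits. The step you flag as the main obstacle is handled in the paper by Remark~\ref{r:right}, which rests on the fact that $\Sec(\F|_{\I})^{\coc}$ models the $\infty$-categorical limit.
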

\begin{proof} We need to verify that $\Def(X,\textbf{k})$ is weakly contractible, and that $\Def(X,-)$ preserves the type of Cartesian squares appearing in Definition~\ref{d:fmp}. The first claim immediately follows from Remark~\ref{r:defor}.

	We now verify the second claim, again using the homotopy pullback representing $\Def(X,-)$ from  Remark~\ref{r:defor}. Suppose given a homotopy Cartesian square in $\CAlg^{\art}$ of the form
	$$ \xymatrix{
		R \ar[r]\ar[d] & S \ar[d] \\
		T \ar[r] & U \\
	}$$
	such that the two maps $\pi_0(S) \lrar \pi_0(U)$ and $\pi_0(T) \lrar \pi_0(U)$ are surjective. By assumption, there is a homotopy Cartesian square of model categories
	$$ \xymatrix{
		\F(R) \ar[r]\ar[d] & \F(S) \ar[d] \\
		\F(T) \ar[r] & \F(U).  \\
	}$$
	By Remark~\ref{r:right}, this induces a homotopy Cartesian square of the underlying $\infty$-categories, and hence, we further obtain a homotopy Cartesian square of $\infty$-groupoids
	\begin{equation}\label{e:car}
		\xymatrix{
			\F(R)_{\infty}^{\simeq} \ar[r]\ar[d] & \F(S)_{\infty}^{\simeq} \ar[d] \\
			\F(T)_{\infty}^{\simeq} \ar[r] & \F(U)_{\infty}^{\simeq}.  \\
		}
	\end{equation}
	By the commutation of homotopy limits with homotopy limits, we conclude that the square of $\infty$-groupoids
	$$
	\xymatrix{
		(\F(R)_{\infty}^{\simeq})_{/X} \ar[r]\ar[d] & (\F(S)_{\infty}^{\simeq})_{/X} \ar[d] \\
		(\F(T)_{\infty}^{\simeq})_{/X} \ar[r] & (\F(U)_{\infty}^{\simeq})_{/X}  \\
	}
	$$
	is homotopy Cartesian as well. The proof is therefore completed.
\end{proof}

\subsection{Main theorem}\label{sub:fmpandQcohom}

We begin by introducing the following significant concept.

\begin{define}\label{d:defor-first-order} Let $\F: \CAlg^{\art} \lrar \ModCat$ be a formal moduli context,  $X \in \F(\textbf{k})$ a fibrant object, and $M \in \Sp(\CAlg^{\art})$ an $\Om$-spectrum. (Then the structure map $\Om^{\infty}M \lrar \textbf{k}$ exhibits $\Om^{\infty}M$ as an artinian $\textbf{k}$-algebra.) We  denote by $$\Def(X,M) := \Def(X,\Om^{\infty}M),$$ and refer to it as the \textbf{space of first order deformations of $X$ in direction} $M$. 
\end{define}

\begin{conv}\label{conv:tmain} In the rest of this subsection, we let $\F: \CAlg^{\art} \lrar \ModCat$ be a fixed formal moduli context, and assume that $X \in \F(\textbf{k})$ is such that the tangent category $\T_X \F(\textbf{k})$ exists (at least) as a semi-model category.
\end{conv}

\begin{rem} Let $R \in \CAlg^{\art}$ with unit $u : \textbf{k} \lrar R$ and augmentation $\varepsilon : R \lrar \textbf{k}$. The canonical natural transformation $u_! \lrar \varepsilon^*$ induces a natural transformation $u^*u_! \lrar \Id \cong u^*\varepsilon^*$, which forms a retraction of the unit $\Id \lrar u^*u_!$. Consequently, we obtain a natural factorization $X \lrar u^*u_!X \lrar X$ of $\Id_X$. Accordingly, we make the following definition.
\end{rem}

\begin{define}\label{d:M(X)}
Given a spectrum object $M: \NN \times \NN \lrar \CAlg^{\art}$, we denote by $M(X) \in \T_X \F(\textbf{k})$ the spectrum object with
	$$ M(X)_{n,m} := u_{n,m}^*(u_{n,m})_!X \in \F(\textbf{k})_{X//X} $$ 
	where $u_{n,m}: \textbf{k} \lrar M_{n,m}$ refers to the unit of $M_{n,m} \in \CAlg^{\art}$.   
\end{define}

\begin{lem}\label{l:specMX}
	Suppose further that $X \in \F(\textbf{k})$ is cofibrant and that $M \in \Sp(\CAlg^{\art})$ is an $\Om$-spectrum. Then $M(X)\in \T_X \F(\textbf{k})$ is an $\Om$-spectrum as well.
\end{lem}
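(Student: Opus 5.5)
The plan is to check the two conditions of Definition~\ref{dfnspectrumobj}(2) for $M(X)$ directly. These are: every off-diagonal entry is a weak zero object of $\F(\textbf{k})_{X//X}$, and every diagonal square is homotopy Cartesian. Both will be deduced from the corresponding properties of $M$ in $\CAlg^{\art}$, using the two axioms of a formal moduli context (Definition~\ref{d:moduli}).

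\emph{Off-diagonal entries.} If $n \neq m$, then $M_{n,m}$ is a weak zero object of $\CAlg^{\aug}$, i.e. the unit $u_{n,m} : \textbf{k} \lrar M_{n,m}$ is a weak equivalence.
\begin{enumerate}
\item By Definition~\ref{d:moduli}(1), $(u_{n,m})_! \dashv u_{n,m}^*$ is then a Quillen equivalence, and $u_{n,m}^*$ preserves all weak equivalences.
\item Since $X$ is cofibrant, the derived unit $X \lrar \RR u_{n,m}^*\, (u_{n,m})_!X$ is a weak equivalence.
\item The derived functor $\RR u_{n,m}^*$ is computed by $u_{n,m}^*$ itself, because $u_{n,m}^*$ preserves weak equivalences (apply it to a fibrant replacement of $(u_{n,m})_!X$).
\end{enumerate}
Hence $X \lrar u_{n,m}^*(u_{n,m})_!X$ is a weak equivalence. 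By two-out-of-three applied to the retraction $u^*u_!X \lrar X$, this says $M(X)_{n,m}$ is weakly equivalent to the zero object $X$ of $\F(\textbf{k})_{X//X}$.

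\emph{Diagonal squares.} Fix $n$ and write
\[ R = M_{n,n}, \quad S = M_{n,n+1}, \quad T = M_{n+1,n}, \quad U = M_{n+1,n+1}. \]
This square is homotopy Cartesian in $\CAlg^{\aug}$, and $S \simeq T \simeq \textbf{k}$. If the maps $\pi_0(S) \lrar \pi_0(U)$ and $\pi_0(T) \lrar \pi_0(U)$ are surjective, Definition~\ref{d:moduli}(2) says the square $\F(R), \F(S), \F(T), \F(U)$ is a homotopy Cartesian square of model categories. By Remark~\ref{r:right}, the underlying $\infty$-categories then form a pullback. I would then use the description of objects in such a pullback from Appendix~\ref{s:hocartmodel} together with the first step: an object of $\F(R)$ is recovered as the homotopy limit of its images in $\F(S) \simeq \F(\textbf{k})$, $\F(T) \simeq \F(\textbf{k})$ and $\F(U)$. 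Restricting along the units, $u_R^*u_{R!}X$ becomes the homotopy pullback of $u_S^*u_{S!}X \simeq X$ and $u_T^*u_{T!}X \simeq X$ over $u_U^*u_{U!}X$. This is exactly the statement that the diagonal square of $M(X)$ is homotopy Cartesian in $\F(\textbf{k})_{X//X}$.

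Pushing the adjoints $u^*$ through the pullback needs care. I plan to use that the $u^*$ preserve weak equivalences, so the right adjoints assemble into a map of pullback diagrams of $\infty$-categories, and that the units satisfy $u_U = (S \lrar U) \circ u_S$, and similarly through $T$.

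\emph{Main obstacle.} The step I expect to be hardest is the $\pi_0$-surjectivity hypothesis needed to invoke Definition~\ref{d:moduli}(2). Since $\pi_0(S) = \textbf{k}$, surjectivity means $\pi_0(U) = \textbf{k}$, i.e. the augmentation ideal of $U = M_{n+1,n+1}$ has vanishing $H_0$. My first attempt is to use that $M$ is an $\Om$-spectrum of connective algebras: the diagonal entries should be, up to equivalence, square-zero extensions $\textbf{k} \oplus V_{n}$ with $V_{n+1}$ a delooping of $V_n$, which would make $V_{n+1}$ connected. If this fails in general, I would instead replace $U$ by its $0$-connected cover $U' := \textbf{k} \oplus \tau_{\geq 1}(\ker U)$. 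This does not change the homotopy pullback $R$ of connective algebras, and it makes the surjectivity hypothesis hold. I would then check that replacing $U$ by $U'$ does not affect the loop object $\Om M(X)_{n+1,n+1}$ in $\F(\textbf{k})_{X//X}$.
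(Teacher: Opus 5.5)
\textbf{Verdict: genuine gap.} Apart from the step you flag, your outline is the paper's proof. The prespectrum part is identical. For the diagonal squares, the paper also writes the square of $M(X)$ as $u_{n,n}^*$ applied to the unit square of $X'=(u_{n,n})_!X$ in $\F(M_{n,n})$, and then invokes Definition~\ref{d:moduli}(2) and Remark~\ref{r:car}. The paper obtains $\pi_0$-surjectivity by essentially your first idea: it passes to $\Sp(\Alg^{\aug}_{\Com}(\C(\textbf{k})))$ and concludes $M_{n+1,n+1}\simeq \textbf{k}\oplus\ovl{M}_{n,n}[1]$. That step is where the gap lies, both in your proposal and in that argument.

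\textbf{Why the delooping claim fails.} Homotopy pullbacks in $\CAlg^{\aug}$ are computed on connective complexes, i.e.\ they are $\tau_{\geq 0}$ of the unbounded ones. So in your notation the $\Omega$-spectrum condition only says $V_n\simeq\tau_{\geq 0}(V_{n+1}[-1])$. This determines $V_n$ from $V_{n+1}$, not conversely, and leaves $\pi_0(V_{n+1})$ unconstrained.

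\textbf{A counterexample.} Take a finite-dimensional $\Omega$-spectrum in $\C_{\geq 0}(\textbf{k})$ with diagonal levels $0,\textbf{k},\textbf{k}[1],\textbf{k}[2],\dots$ (it represents $\textbf{k}[-1]$). Apply the functor $V\mapsto \textbf{k}\oplus V$, which preserves weak equivalences and homotopy pullbacks. This gives an $\Omega$-spectrum $M\in\Sp(\CAlg^{\art})$ with $M_{0,0}\simeq\textbf{k}$ and $M_{1,1}\simeq\textbf{k}[t]/(t^2)$, $t$ in degree $0$. Hence $\pi_0(M_{0,1})\to\pi_0(M_{1,1})$ is not surjective.

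The lemma itself fails for this $M$. Take $\F=\F_{\Mod}$ and $X=\textbf{k}[1]$, so that $M(X)_{n,m}=X\otimes M_{n,m}$. The homotopy pullback of $M(X)_{0,1}\to M(X)_{1,1}\leftarrow M(X)_{1,0}$ is $X\oplus\tau_{\geq 0}(X[-1])\simeq \textbf{k}[1]\oplus\textbf{k}$. You can also see this from Mayer--Vietoris: it has $H_0=\textbf{k}$. This is not quasi-isomorphic to $M(X)_{0,0}\simeq X$, so $M(X)$ is not an $\Omega$-spectrum.

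\textbf{Why your fallback also fails.} The example breaks it at exactly the check you planned. Replacing $U$ by its connected cover leaves $R$ unchanged. But the loop object $X\times^{\h}_{u_U^*(u_U)_!X}X$ does depend on $\pi_0(U)$, because $u^*u_!$ (here $X\otimes -$) does not commute with $\tau_{\geq 0}$.

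\textbf{What your argument does prove.} It proves the lemma under the extra hypothesis $\pi_0(M_{m,m})\cong\textbf{k}$ for all $m\geq 1$. Then each map $\pi_0(M_{n,n+1})\to\pi_0(M_{n+1,n+1})$ is an isomorphism, Definition~\ref{d:moduli}(2) applies, and the rest of your outline goes through as written. This hypothesis holds for the spectra with $M_{n,n}\simeq\textbf{k}\ltimes A[n]$, $A$ connective, from Construction~\ref{con:PotimesA}, which are the ones used downstream. The proof of Theorem~\ref{t:main} relies on the same $\pi_0$ claim. So the statement should be amended to include this hypothesis rather than proved as stated.
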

\begin{proof}
	First, we show that $M(X)$ is a prespectrum. For $(n,m) \in \NN \times \NN$ with $n \neq m$, we need to prove that the unit map $X \lrar u_{n,m}^*(u_{n,m})_!X$ is a weak equivalence. Since $M$ is in particular a prespectrum, the unit map $u_{n,m}: \textbf{k} \lrar M_{n,m}$ is a weak equivalence. Therefore, the adjunction $$(u_{n,m})_! : \adjunction*{}{\F(\textbf{k})}{\F(M_{n,m})}{} : (u_{n,m})^*$$ is a Quillen equivalence, and $(u_{n,m})^*$ preserves weak equivalences (cf. Definition \ref{d:moduli}(1)). These facts, together with the cofibrancy of $X$, prove that $X \lrar u_{n,m}^*(u_{n,m})_!X$ is indeed a weak equivalence.

	It remains to show that for each $n \geq 0$, the following square in $\F(\textbf{k})$ is homotopy Cartesian:
	\begin{equation}\label{e:diag}
		\xymatrix{
			u_{n,n}^*(u_{n,n})_!X \ar[r]\ar[d] & u_{n,n+1}^*(u_{n,n+1})_!X \ar[d] \\
			u_{n+1,n}^*(u_{n+1,n})_!X \ar[r] & u_{n+1,n+1}^*(u_{n+1,n+1})_!X.  \\
		}
	\end{equation}
 For $i, j \in \{n,n+1\}$, we write $\vphi_{i,j} : M_{n,n} \lrar M_{i,j}$ for the map induced by $(n,n) \lrar (i,j)$ in $\NN \times \NN$. Since $u_{i,j} = \vphi_{i,j} \circ u_{n,n}$, we have $u_{i,j}^{*}=u_{n,n}^{*}\circ \vphi_{i,j}^{*}$ and $(u_{i,j})_! = (\vphi_{i,j})_! \circ (u_{n,n})_!$. Hence, the square \eqref{e:diag} agrees with the image under $u_{n,n}^{*}$ of the following square in $\F(M_{n,n})$:
	
	\begin{equation}\label{e:diag-2}
		\xymatrix{
			X' \ar[r]\ar[d] & \vphi_{n,n+1}^*(\vphi_{n,n+1})_!X' \ar[d] \\
			\vphi_{n+1,n}^*(\vphi_{n+1,n})_!X' \ar[r] & \vphi_{n+1,n+1}^*(\vphi_{n+1,n+1})_!X'  \\
		}
	\end{equation}
	where $X':=(u_{n,n})_!X$. Therefore, it suffices to verify that this square is homotopy Cartesian. By assumption, the following square in $\CAlg^{\art}$
	\begin{equation}\label{e:diag-3}
		\xymatrix{
			M_{n,n} \ar[r]\ar[d] & M_{n,n+1} \ar[d] \\
			M_{n+1,n} \ar[r] & M_{n+1,n+1}  \\
		}
	\end{equation}
	is homotopy Cartesian, with $M_{n,n+1}\simeq M_{n+1,n} \simeq \textbf{k}$. Besides, note that $$\Sp(\CAlg^{\aug}) \x{\defi}{=}  \Sp(\Alg^{\aug}_{\Com}(\C_{\geq 0}(\textbf{k}))) \simeq \Sp(\Alg^{\aug}_{\Com}(\C(\textbf{k})))$$ (both are Quillen equivalent to $\C(\textbf{k})$). Accordingly, we may assume that $M$ arises from an $\Om$-spectrum in $\Sp(\Alg^{\aug}_{\Com}(\C(\textbf{k})))$, so that the underlying square of chain complexes of \eqref{e:diag-3} is also homotopy coCartesian. Moreover, by definition, there exists  $\ovl{M}_{n,n}\in\C_{\geq 0}(\textbf{k})$ such that $M_{n,n} \cong \textbf{k} \oplus \ovl{M}_{n,n}$ as chain complexes. Consequently,  $M_{n+1,n+1} \simeq \textbf{k} \oplus \ovl{M}_{n,n}[1]$ as chain complexes. Thus, the maps $M_{n,n+1}\lrar M_{n+1,n+1}$ and $M_{n+1,n}\lrar M_{n+1,n+1}$ induce  isomorphisms on $\pi_0$. Due to this, we obtain a homotopy Cartesian square of model categories:
	$$ \xymatrix{
		\F(M_{n,n}) \ar[r]\ar[d] & \F(M_{n,n+1}) \ar[d] \\
		\F(M_{n+1,n}) \ar[r] & \F(M_{n+1,n+1})  \\
	}$$
(cf. Definition \ref{d:moduli}(2)). Finally, by Remark~\ref{r:car} we deduce that the square \eqref{e:diag-2} is indeed homotopy Cartesian.
\end{proof}

\begin{rem}\label{r:specMX} Suppose $\F$ further satisfies that, for every $R\in\CAlg^{\art}$ with unit  $u : \textbf{k} \lrar R$, the induced functor $u_! : \F(\textbf{k}) \lrar \F(R)$ preserves weak equivalences. Then, following the proof of Lemma~\ref{l:specMX}, $M(X)$ is already an $\Omega$-spectrum for every $X \in \F(\textbf{k})$ and every $\Om$-spectrum $M \in \Sp(\CAlg^{\art})$. For instance, this condition is satisfied by the functors considered in Propositions \ref{p:example-1}, \ref{p:example-2}, \ref{p:example-3}, and \ref{p:example-4}.
\end{rem}

We are now in position to prove the main theorem of this section.
\begin{thm}\label{t:main} Suppose further that $X \in \F(\textbf{k})$ is \textit{bifibrant} (i.e., both fibrant and cofibrant), and let $M \in \Sp(\CAlg^{\art})$ be an $\Om$-spectrum. Then there is a weak equivalence of spaces
	$$  \Def(X,M) \simeq \HHQ^\star(X ; M(X)[1]) $$
(see Definition~\ref{d:defor-first-order} for notation). Consequently, there is a canonical isomorphism 
$$ \pi_0 \Def(X,M) \cong \HHQ^1(X;M(X)),$$ 
and for every $n\geq1$, there is a canonical isomorphism 
$$ \pi_n(\Def(X,M),*) \cong \HHQ^{1-n}(X;M(X))$$
where $*$ refers to any base point.
\end{thm}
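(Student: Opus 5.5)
Both sides will be identified with a single homotopy fiber. On the deformation side, Remark \ref{r:defor} gives that $\Def(X,R)$, for $R=\Om^\infty M=M_{0,0}$, is the homotopy fiber over $X$ of $\widetilde{\eps}_!:\F(R)^{\simeq}_\infty\lrar\F(\textbf{k})^{\simeq}_\infty$. On the cohomology side, Remark \ref{r:modelQ} together with $\Om^\infty_+$ of the $\Om$-spectrum $M(X)[1]$ (Lemma \ref{l:specMX}) gives
$$\HHQ^\star(X;M(X)[1])\simeq \Map^{\h}_{\F(\textbf{k})_{/X}}\big(X,\,M(X)_{1,1}\big)$$
as a space of sections of $M(X)_{1,1}=u_{1,1}^*(u_{1,1})_!X\lrar X$. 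Indeed, the zeroth space of $M(X)[1]$ is the first diagonal entry of $M(X)$, up to the shift conventions of Definition~\ref{dfnspectrumobj}. So it suffices to produce an equivalence between the deformation space over $M_{0,0}$ and this space of sections.

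The key input is Definition \ref{d:moduli}(2), applied to the diagonal square of $M$ from $(0,0)$ to $(1,1)$. Its corners $M_{0,1}$ and $M_{1,0}$ are equivalent to $\textbf{k}$, and, as in the proof of Lemma \ref{l:specMX}, the maps to $M_{1,1}$ are surjective (indeed isomorphisms) on $\pi_0$. Hence
$$\F(M_{0,0})_\infty\simeq \F(\textbf{k})_\infty\times_{\F(M_{1,1})_\infty}\F(\textbf{k})_\infty,$$
where both maps $\F(\textbf{k})\lrar\F(M_{1,1})$ are $(u_{1,1})_!$ up to the Quillen equivalences $\F(M_{0,1})\simeq\F(\textbf{k})$ induced by $\textbf{k}\simeq M_{0,1}$. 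Passing to maximal subgroupoids (Remark \ref{r:right}) and taking the fiber over $X$ along the first projection, which models $\eps_!$ up to equivalence, I will identify $\Def(X,M_{0,0})$ with the space of pairs $(X',\alpha)$, where $X'\in\F(\textbf{k})$ and $\alpha:(u_{1,1})_!X\simeq(u_{1,1})_!X'$. This is the homotopy fiber over $(u_{1,1})_!X$ of $(u_{1,1})_!:\F(\textbf{k})^{\simeq}_\infty\lrar\F(M_{1,1})^{\simeq}_\infty$.

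Next I will reduce to a loop space. The square of \emph{spaces} of equivalence classes is Cartesian, so the fiber over $X$ of $\F(M_{0,0})^\simeq\to\F(\textbf{k})^\simeq$ is equivalent to the fiber over $X$ of the map $\F(\textbf{k})^\simeq\to\F(M_{1,1})^\simeq$ along the other leg. Since that map has a retraction $\eps_!$, its fiber over $(u_{1,1})_!X$ is the space of paths in $\F(M_{1,1})^\simeq$ from $(u_{1,1})_!X$ to the image of objects, that is, an automorphism-type space. Concretely, I expect it to be equivalent to the space of self-equivalences of $(u_{1,1})_!X$ over $\eps_!$ which restrict to the identity, i.e.\ $\Om_{\id}\Map_{\F(M_{1,1})}((u_{1,1})_!X,(u_{1,1})_!X)$ relative to $X$. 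By adjunction this becomes
$$\Map_{\F(\textbf{k})_{/X}}(X,u_{1,1}^*(u_{1,1})_!X),$$
where deformations of $\alpha$ relative to the augmentation are controlled by maps $X\lrar u^*u_!X$ lifting $\id_X$. This matches the cohomology side above and gives $\Def(X,M)\simeq\HHQ^\star(X;M(X)[1])$.

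The homotopy groups then follow formally. We have $\pi_0=\pi_0\Map(\rL_X,M(X)[1])=\HHQ^1$, and for $n\ge1$, $\pi_n=\pi_0\Map(\rL_X,\Om^nM(X)[1])=\HHQ^{1-n}$. The statement holds at any base point because a mapping space in a stable setting is an infinite loop space, so its components are all equivalent. The main obstacle is the middle step: making precise the identification of the fiber of $(u_{1,1})_!$ on maximal subgroupoids with the section space of $u^*u_!X\to X$. Here cofibrancy and fibrancy of $X$ and Definition \ref{d:moduli}(1) are needed to compute derived mapping spaces in $\F(M_{1,1})$ via the adjunction $(u_!,u^*)$. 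The grading bookkeeping, i.e.\ whether this gives $M(X)[1]$ rather than $M(X)$, must also be checked against the spectrum indexing.
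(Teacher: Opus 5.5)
Your route is the paper's: Definition~\ref{d:moduli}(2) is applied to the diagonal square of $M$ with corners $M_{0,0},M_{0,1},M_{1,0},M_{1,1}$. This identifies $\Def(X,M)$ with the homotopy fiber of $(u_{1,1})_!:\F(\textbf{k})^{\simeq}_\infty\lrar\F(M_{1,1})^{\simeq}_\infty$ over $(u_{1,1})_!X$. The retraction $\eps_!$ then turns this into the homotopy fiber of $\Aut^{\h}((u_{1,1})_!X)\lrar\Aut^{\h}(X)$ over $\Id_X$, and adjunction finishes. The paper makes the retraction step precise by a pasting argument. It lets $Z$ be the fiber of $\eps_!$ over $X$, observes that the fiber of $u_!$ is then $\Om Z$, and identifies $\Om Z$ with the fiber of $\Om_{u_!X}\F(M_{1,1})^{\simeq}_\infty\lrar\Om_X\F(\textbf{k})^{\simeq}_\infty$. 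What you want there is loops in the groupoid, i.e.\ $\Aut^{\h}$. Writing $\Om_{\id}$ of the mapping space is off by one loop. Your grading worry is unfounded: $\Om^\infty_+(M(X)[1])\simeq M(X)_{1,1}=u_{1,1}^*(u_{1,1})_!X$ for the $\Om$-spectrum of Lemma~\ref{l:specMX}.

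The genuine gap is the final sentence, ``by adjunction this becomes $\Map_{\F(\textbf{k})_{/X}}(X,u^*u_!X)$''. The adjunction $u_!\dashv u^*$ identifies $\Map^{\h}(u_!X,u_!X)\simeq\Map^{\h}(X,u^*u_!X)$, i.e.\ \emph{all} self-maps. At that stage you only have self-\emph{equivalences} of $u_!X$ lying over $\Id_X$. To pass from one to the other, the square comparing $\Aut^{\h}$ and $\Map^{\h}$ for $u_!X$ and $X$ must be homotopy Cartesian. That is, a self-map of $u_!X$ that becomes an equivalence after $\eps_!$ must already be an equivalence; you need $\eps_!:\F(M_{1,1})_\infty\lrar\F(\textbf{k})_\infty$ to be conservative. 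This Nakayama-type fact does not follow from bifibrancy of $X$ or from Definition~\ref{d:moduli}(1).

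In the paper it comes from a \emph{second} application of Definition~\ref{d:moduli}(2), to the next diagonal square with corners $M_{1,1},M_{1,2},M_{2,1},M_{2,2}$, whose maps are again $\pi_0$-isomorphisms. This gives $\F(M_{1,1})_\infty\simeq\F(\textbf{k})_\infty\times_{\F(M_{2,2})_\infty}\F(\textbf{k})_\infty$, where the projection models $\eps_!$ because the augmentation factors through $M_{1,2}\simeq\textbf{k}$. Take a morphism in this pullback whose first component is an equivalence. Its second component is sent by $u_!$ to an equivalence, so it is itself an equivalence, since $\eps_!u_!\simeq\Id$. Hence the projection is conservative. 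With this ingredient added, and the loop-space step made precise as above, your sketch becomes the paper's proof.
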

\begin{proof} By Remark~\ref{r:modelQ}, it suffices to prove that $\Def(X,M)$ is weakly equivalent to the derived mapping space $\Map^{\h}_{\F(\textbf{k})_{/X}}(X,\Om_+^{\infty-1}M(X))$. 
	
	 First, by assumption, we have $M_{1,0} \simeq M_{0,1} \simeq \textbf{k}$, $M_{0,0} \simeq \Om^{\infty}M$ and $M_{1,1} \simeq \Om^{\infty}M[1]$. Moreover, the maps $M_{1,0} \lrar M_{1,1}$ and $M_{0,1} \lrar M_{1,1}$ induce isomorphisms on $\pi_0$ (see the proof of Lemma~\ref{l:specMX}). We simply denote by $u_M : \textbf{k} \lrar \Om^{\infty}M[1]$ the unit, and by $\eps_M:\Om^{\infty}M[1] \lrar \textbf{k}$ the augmentation of $\Om^{\infty}M[1]$. 
	As in the proof of Proposition~\ref{p:moduli}, there is a homotopy Cartesian square of  $\infty$-groupoids:
	$$ \xymatrix{
		\F(\Om^{\infty}M)_{\infty}^{\simeq} \ar[r]\ar[d] & \F(\textbf{k})_{\infty}^{\simeq} \ar^{(\wtl{u}_M)_!}[d] \\
		\F(\textbf{k})_{\infty}^{\simeq} \ar[r]_-{(\wtl{u}_M)_!} & \F(\Om^{\infty}M[1])_{\infty}^{\simeq}.  \\
	}$$
Combining this with Remark~\ref{r:defor}, we deduce that $\Def(X,M)$ is weakly equivalent to the homotopy fiber of the right vertical map over $(\wtl{u}_M)_!X$. Next, consider a diagram of $\infty$-groupoids of the form
	$$ \xymatrix{
		\ast \ar^-{\{X\}}[r]\ar^{\iota}[d] & \F(\textbf{k})_{\infty}^{\simeq} \ar^{(\wtl{u}_M)_!}[d] \\
		Z \ar[r]\ar[d] & \F(\Om^{\infty}M[1])_{\infty}^{\simeq} \ar^{(\wtl{\eps}_M)_!}[d]  \\
		\ast \ar^-{\{X\}}[r] & \F(\textbf{k})_{\infty}^{\simeq}  \\
	}$$
	with $Z$ chosen in such a way that the bottom square is homotopy Cartesian. In particular, the top square is homotopy Cartesian as well. Hence, we may identify $\Def(X,M)$ with the loop space $\Om_{\iota(\ast)}Z$. It follows that $\Def(X,M)$ is weakly equivalent to the homotopy fiber (over the constant loop at $X$) of the map between loop spaces
	\begin{equation}\label{eq:hofibdef}
		\Om_{(\wtl{u}_M)_!X} \, \F(\Om^{\infty}M[1])_{\infty}^{\simeq} \lrar \Om_{X} \, \F(\textbf{k})_{\infty}^{\simeq}
	\end{equation}
	induced by $(\wtl{\eps}_M)_!$. Let $\K(X,M)$ denote this homotopy fiber. It remains to show that $\K(X,M)$ is weakly equivalent to  $\Map^{\h}_{\F(\textbf{k})_{/X}}(X,\Om_+^{\infty-1}M(X))$.

	 It is known that the map \eqref{eq:hofibdef} is weakly  equivalent to the map $$\Aut^{\h}_{\F(\Om^{\infty}M[1])}((u_{M})_!X,(u_{M})_!X) \lrar \Aut^{\h}_{\F(\textbf{k})}(X,X)$$ where $\Aut^{\h}_{\F(\textbf{k})}(X,X) \subseteq \Map^{\der}_{\F(\textbf{k})}(X,X)$ denotes the subspace of self-equivalences on $X$, and the domain is defined analogously (cf., e.g. \cite{DK}). Moreover, by the same  argument as in the proof of Lemma~\ref{l:specMX}, the induced square of $\infty$-categories
	  $$ \xymatrix{
	  	\F(\Om^{\infty}M[1])_{\infty} \ar[r]\ar[d] & \F(\textbf{k})_{\infty} \ar[d] \\
	  	\F(\textbf{k})_{\infty} \ar[r] & \F(\Om^{\infty}M[2])_{\infty}  \\
	  }$$
  is Cartesian. This implies that the functor $\F(\Om^{\infty}M[1])_{\infty}  \lrar \F(\textbf{k})_{\infty}$ detects equivalences. We therefore obtain a homotopy Cartesian square of spaces:  
	 \begin{equation}\label{eq:KXM}
	 	\xymatrix{
	 		\Aut^{\h}_{\F(\Om^{\infty}M[1])}((u_{M})_!X,(u_{M})_!X) \ar[r]\ar[d] &  \Map^{\der}_{\F(\Om^{\infty}M[1])}((u_M)_!X,(u_M)_!X) \ar[d] \\
	 		\Aut^{\h}_{\F(\textbf{k})}(X,X) \ar[r] & \Map^{\der}_{\F(\textbf{k})}(X,X).  \\
	 	}
	 \end{equation}
Thus, $\K(X,M)$ is weakly equivalent to the homotopy fiber over $\Id_X$ of the right vertical map. Furthermore, by the Quillen adjunction $(u_M)_! \dashv u^{*}_{M}$, this map is weakly equivalent to the map
\begin{equation}\label{eq:tmain1}
	\Map^{\der}_{\F(\textbf{k})}(X,u^{*}_{M}(u_M)_!X) \lrar \Map^{\der}_{\F(\textbf{k})}(X,X)
\end{equation}
induced by $u^{*}_{M}(u_M)_!X \lrar X$. Finally, since $X$ is fibrant, the homotopy fiber of the map \eqref{eq:tmain1} over $\Id_X$  can be identified with the derived mapping space 
$$ \Map^{\der}_{\F(\textbf{k})_{/X}}(X,u^{*}_{M}(u_M)_!X) \; \x{\defi}{=} \;  \Map^{\der}_{\F(\textbf{k})_{/X}}(X,\Om^{\infty-1}_+M(X)).$$  
The proof is therefore completed.
\end{proof}

\begin{rem} In practice, Theorem~\ref{t:main} usually remains valid for $X$ even without assuming (co)fibrancy, as discussed in Remark \ref{r:refinemain}.
\end{rem}

\subsection{Examples of formal moduli contexts}\label{sub:examples}

In this subsection, we describe some interesting examples of formal moduli contexts. Throughout this process, we need to verify the two conditions of Definition~\ref{d:moduli}.

\begin{notn} For any $R \in \CAlg^{\art}$, we denote by $\Mod_R := \Mod_R(\C_{\geq 0}(\textbf{k}))$ the category of $R$-modules in $\C_{\geq 0}(\textbf{k})$.
\end{notn}

 The following lemma will be useful.

\begin{lem}\label{l:examlem} Suppose given a map $R \lrar S$ in $\CAlg^{\art}$ that is surjective on $\pi_0$. Then any object $K\in\Mod_R$ is acyclic whenever $K\otimes_R S$ is.
\begin{proof} Assume that $K\otimes_R S$ is acyclic, and suppose for contradiction that $K$ is not. Let $k\geq0$ be the smallest integer such that $\pi_k K$ is nontrivial, and set $\mathfrak{a} := \ker(\pi_0(R) \lrar \pi_0(S))$. We then obtain $$\pi_k K/\mathfrak{a}(\pi_k K) \cong \pi_k K \otimes_{\pi_0R} \pi_0S \cong \pi_k(K\otimes_{R} S) = 0.$$
Now, the equation $\pi_k K = \mathfrak{a}(\pi_k K)$ leads to the contradiction that $\pi_k K =0$, since every ideal of a local artinian algebra is nilpotent. 
\end{proof}
\end{lem}

\begin{pro}\label{p:example-1} The functor
	\begin{equation}\label{FMod}
		\F_{\Mod}: \CAlg^{\art} \lrar \ModCat, \; R \mapsto \Mod_R
	\end{equation}
is a formal moduli context.
\end{pro}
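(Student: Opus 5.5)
We check the two conditions of Definition~\ref{d:moduli} for $R \mapsto \Mod_R$, where a map $f: R \lrar S$ is sent to the extension--restriction adjunction $f_! = (-)\otimes_R S \dashv f^*$.

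\textbf{Condition (1).} The restriction $f^*$ preserves weak equivalences because they are detected on underlying complexes. For a weak equivalence $f$ we use the standard result on modules over commutative dg algebras: extension--restriction along a quasi-isomorphism of connective commutative dg algebras over a field of characteristic $0$ is a Quillen equivalence. Concretely, for cofibrant $K$ the derived unit $K \lrar f^*(K\otimes_R S)$ is a quasi-isomorphism, since $K$ is a retract of a cell module and $R \lrar S$ is a quasi-isomorphism. Functoriality of $\F_{\Mod}$ up to the evident coherence isomorphisms is routine.

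\textbf{Condition (2).} Take a homotopy Cartesian square $R \lrar S, T \lrar U$ with $\pi_0$-surjective legs. By Definition~\ref{d:car}, we must show that the comparison from $\Mod_R$ to the homotopy pullback $\Mod_S\times^{\h}_{\Mod_U}\Mod_T$ is an equivalence. This is Lurie's descent statement for modules along such squares (cf.\ \cite{Luriefmp}), and I would reprove it in two steps.

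\emph{Fully faithful.} For cofibrant $K \in \Mod_R$, the square
$$K \lrar K\otimes_R S,\quad K\otimes_R T \lrar K\otimes_R U$$
is homotopy Cartesian. Indeed, $K\otimes_R(-)$ is exact on the underlying stable level, and the square of rings is bicartesian as a square of $R$-modules, since in chain complexes homotopy pullbacks are pushouts. Combined with adjunction, this gives that the derived mapping spaces in $\Mod_R$ agree with those computed in the pullback.

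\emph{Essentially surjective.} Given a triple $(M_S, M_T, \alpha)$ with $\alpha: M_S\otimes_S U \simeq M_T\otimes_T U$, set
$$K := M_S\times^{\h}_{M_U} M_T \in \Mod_R.$$
We must show the counit $K\otimes_R S \lrar M_S$ is an equivalence, and similarly for $T$.

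This counit is the step I expect to be the main obstacle. Both sides are not a priori connective in the right way, so we must also show $K$ is connective. That follows because $\pi_0 M_S \lrar \pi_0 M_U$ is surjective, since $\pi_0(S)\lrar\pi_0(U)$ is. The surjectivity on $\pi_0$ will be used a second time here: I would first check the counit is an equivalence after base change to $U$, which is a direct computation using the pullback. Then I would apply Lemma~\ref{l:examlem} to the cofiber of the counit along $\pi_0$-surjective maps such as $S \lrar U$ to conclude that the cofiber is acyclic. This is exactly where the artinian (nilpotence) hypothesis enters.

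If the cofiber is not directly an $S$-module, I would instead reduce by a devissage. Since the algebras are artinian, factor the maps into square-zero extensions by shifts of $\textbf{k}$ and treat each elementary step; there the pullback is an explicit square-zero situation.
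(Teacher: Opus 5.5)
\textbf{Gap: essential surjectivity.} Your condition (1) and the unit (fully faithful) half of condition (2) match the paper. In the latter, you should say explicitly that the ring square is homotopy Cartesian in $\C(\textbf{k})$, not just in connective complexes, precisely because $\pi_0(S)\oplus\pi_0(T)\lrar\pi_0(U)$ is surjective.

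The gap is in essential surjectivity. You claim the counit becomes an equivalence after base change to $U$ ``by a direct computation using the pullback''. This is unsupported, and it is in fact the whole content of the statement. Write $K=\mathrm{fib}(M_S\oplus M_T\lrar M_U)$. Then $K\otimes_R U$ is the fiber of $M_S\otimes_S(S\otimes_R U)\oplus M_T\otimes_T(T\otimes_R U)\lrar M_U\otimes_R U$, and nothing in your plan controls $S\otimes_R U$. For instance, take $S=T=\textbf{k}[t]/(t^2)$, $U=\textbf{k}$ and $R=\textbf{k}[x,y]/(x,y)^2$; then $S\otimes_R\textbf{k}$ has homology in every degree.

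Your devissage fallback does not close this either. Reducing condition (2) of Definition~\ref{d:moduli} to elementary square-zero squares would need a pasting argument for homotopy Cartesian squares of model categories, which you do not supply. Even then, the elementary case is asserted rather than computed.

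\textbf{Missing idea: conservativity.} This is how the paper finishes. Since the derived unit is an equivalence, it suffices that $\RR\R^{\coc}$ detects weak equivalences. Passing to cofibers, this means: a bifibrant coCartesian triple $(X_S,X_T,X_U)$ with $X_T\times^{\h}_{X_U}X_S$ acyclic must itself be acyclic. The argument runs as follows.
\begin{enumerate}
\item Suppose not, and take the least $l$ with $\pi_l(X_S)$ or $\pi_l(X_T)$ nonzero.
\item The bottom-degree formula $\pi_l(X_T\otimes_T U)\cong\pi_l(X_T)\otimes_{\pi_0T}\pi_0U$ and $\pi_0$-surjectivity make $\pi_l(X_T)\lrar\pi_l(X_U)$ surjective. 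The same holds for $\pi_l(X_S)\lrar\pi_l(X_U)$.
\item Acyclicity of the pullback makes $\pi_l(X_T)\oplus\pi_l(X_S)\lrar\pi_l(X_U)$ injective.
\item Steps 2 and 3 together force $\pi_l(X_T)=\pi_l(X_S)=0$, a contradiction.
\end{enumerate}

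Your counit route can be repaired the same way. The cofiber of the counit is a coCartesian triple whose homotopy pullback vanishes, by the triangle identity together with the unit being an equivalence. The argument above then kills it.

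Note that the decisive input is this bottom-degree $\pi_0$-surjectivity argument. Lemma~\ref{l:examlem} only handles the easy passage from $U$ back to $S$ or $T$, so the nilpotence is not where the difficulty lies.
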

\begin{proof} (1) We begin by verifying the first condition of Definition~\ref{d:moduli}.  Let $f : R \lrar S$ be a map in $\CAlg^{\art}$. Then  $f_! : \adjunction*{}{\Mod_R}{\Mod_S}{} : f^*$ is the adjunction of induction-restriction functors.  Clearly, $f^*$ preserves weak equivalences. Moreover, since $\textbf{k}$ is a field, the underlying complexes of $R$ and $S$ are automatically cofibrant in $\C_{\geq 0}(\textbf{k})$. Therefore, if $f$ is a weak equivalence, the adjunction $f_! \dashv f^*$ is indeed a Quillen equivalence.  
	
	\smallskip
	
(2)	We will now verify the second condition of Definition~\ref{d:moduli}. Let
	\begin{equation}\label{e:square}
		\xymatrix{
			R \ar[r]\ar[d] & S \ar^{p}[d] \\
			T \ar^{q}[r] & U \\
		}
	\end{equation}
	be a homotopy Cartesian square in $\CAlg^{\art}$ with $p$ and $q$ surjective on $\pi_0$. We need to show that the corresponding square of model categories
	\begin{equation}\label{e:square-2}
		\xymatrix{
			\Mod_R \ar[r]\ar[d] & \Mod_S \ar[d] \\
			\Mod_T \ar[r] & \Mod_U \\
		}
	\end{equation}
	is homotopy Cartesian in the sense of Definition \ref{d:car}. Let $\F': [1] \times [1] \lrar \ModCat$ represent the square \eqref{e:square-2}, and let $\I \subseteq [1] \times [1]$ denote the full subcategory spanned by the vertices $(0,1), (1,0)$ and $(1,1)$. We need to prove that the adjunction $ \L^{\coc} : \adjunction*{}{\Mod_R}{\Sec^{\coc}(\F'|_{\I})}{} : \R^{\coc}$ is a Quillen equivalence. For this, it suffices to show that $\RR\R^{\coc}$ detects weak equivalences (between cofibrant objects) and that, for every cofibrant object $X \in \Mod_R$, the derived unit  $X \lrar \RR \R^{\coc} \L^{\coc}(X)$ is a weak equivalence. 
	
	\smallskip
	
(2a) We first prove the second claim. By Remark~\ref{r:car}, we just need to show that the square
	\begin{equation}\label{e:X}
		\xymatrix{
			X \ar[r]\ar[d] & X \otimes_R S \ar[d] \\
			X \otimes_R T \ar[r] & X \otimes_R U \\
		}
	\end{equation}
	is homotopy Cartesian in $\Mod_R$. In fact, it suffices to prove that this square is homotopy coCartesian in $\Mod_R$. Viewing the square \eqref{e:X} as the image of \eqref{e:square} under the left Quillen functor $X \otimes_R (-) : \Mod_R \lrar \Mod_R$, it  remains to prove that \eqref{e:square} is  homotopy coCartesian in $\Mod_R$. (Since $X$ is cofibrant, the functor $X \otimes_R (-)$ preserves weak equivalences.) Observe that the square \eqref{e:square} is already homotopy Cartesian in $\Mod_R(\C(\textbf{k}))$, due to the fact that a map in $\Mod_R(\C_{\geq 0}(\textbf{k}))$ is an epimorphism whenever it is surjective at every positive degree and surjective on $\pi_0$. Thus, the square \eqref{e:square} is indeed homotopy coCartesian in $\Mod_R$, since the category $\Mod_R(\C(\textbf{k}))$ is stable and the embedding $ \Mod_R \lrar \Mod_R(\C(\textbf{k}))$ detects homotopy coCartesian squares.

\smallskip

(2b) We now verify that $\RR\R^{\coc}: \Sec^{\coc}(\F'|_{\I}) \lrar \Mod_R$ detects weak equivalences between cofibrant objects. Note first that a map in $\Sec^{\coc}(\F'|_{\I})$ is a weak equivalence if and only if its cofiber is a \textbf{weak} \textbf{zero object} (i.e., a section whose entries are weak zero objects). Indeed, the property of being a weak equivalence and the formation of homotopy cofibers are both jointly created by the projections $\Sec^{\coc}(\F'|_{\I}) \lrar \Mod_S$, $\Sec^{\coc}(\F'|_{\I}) \lrar \Mod_T$ and $\Sec^{\coc}(\F'|_{\I}) \lrar \Mod_U$, and the characterization of weak equivalences via cofibers holds in any module category in $\C_{\geq 0}(\textbf{k})$.  It will hence suffice to show that $\RR\R^{\coc}$ detects weak zero cofibrant objects. By definition, a cofibrant section $s\in\Sec^{\coc}(\F'|_{\I})$ consists of a triple $(X_S,X_T,X_U)\in \Mod_S\times\Mod_T\times\Mod_U$ of cofibrant objects together with maps $\vphi: X_T \otimes_T U \lrar X_U$ and $\psi: X_S \otimes_S U \lrar X_U$ in $\Mod_U$. When $s$ is moreover fibrant, these maps are weak equivalences. To summarize, it suffices to show that if $s=(X_S,X_T,X_U)$ is bifibrant and the homotopy pullback $X_T \times^{\der}_{X_U} X_S$ is acyclic, then each of $X_S,X_T$, and $X_U$ is acyclic as well.

Assume henceforth that $X_T \times^{\der}_{X_U} X_S$ is acyclic. If $X_T$ is acyclic, then the weak equivalence $\vphi : X_T \otimes_T U \lrarsimeq X_U$, together with the cofibrancy of $X_T\in\Mod_T$, implies that $X_U$ is acyclic. Hence, by the weak equivalence $\psi: X_S \otimes_S U \lrarsimeq X_U$ and  Lemma \ref{l:examlem},  $X_S$ is also acyclic. Similarly, if $X_S$ is acyclic then so are $X_T$ and $X_U$. Therefore, it suffices to verify that either $X_T$ or $X_S$ is acyclic. Let us assume by contradiction that neither of them is acyclic, and let $l \geq0$ be the smallest integer such that at least one of $\pi_{l}(X_T)$ or $\pi_{l}(X_S)$ is nontrivial. Then $\pi_{l}(X_T \otimes_T U) \cong \pi_{l}(X_T) \otimes_{\pi_0(T)} \pi_0(U)$. Thus, the map $\pi_{l}(X_T) \lrar \pi_{l}(X_U)$, which coincides with the composite map
	$$ \pi_{l}(X_T) \lrar \pi_{l}(X_T) \otimes_{\pi_0(T)} \pi_0(U) \x{\pi_{l}(\vphi)}{\underset{\cong}{\lrar}}  \pi_{l}(X_U),$$
	is surjective. Similarly, the map $\pi_l(X_S) \lrar \pi_l(X_U)$ is surjective as well. On the other hand, since $X_T \times^{\der}_{X_U} X_S$ is acyclic, the induced map $f_l:\pi_l(X_T) \oplus \pi_l(X_S) \lrar \pi_l(X_U)$ is injective. These observations imply that $\pi_l(X_T)=\pi_l(X_S)=0$, a contradiction.
\end{proof}

For a map $f : R \lrar R'$ in $\CAlg^{\art}$, the induced adjunction  $\adjunction*{}{\Mod_R}{\Mod_{R'}}{}$ satisfies that the left (resp. right) adjoint is symmetric monoidal (resp. lax symmetric monoidal). It hence gives rise to a Quillen adjunction $f_! : \adjunction*{}{\Alg_{\P}(\Mod_R)}{\Alg_{\P}(\Mod_{R'})}{} : f^*$. 
\begin{pro}\label{p:example-2}
	Let $\P$ be an operad in $\C_{\geq 0}(\textbf{k})$. Then the functor 
	\begin{equation}\label{FAlg}
		\F_{\Alg}: \CAlg^{\art} \lrar \ModCat, \; R \mapsto \Alg_{\P}(\Mod_R)
	\end{equation}
is a formal moduli context.
\end{pro}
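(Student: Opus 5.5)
We reduce both conditions of Definition~\ref{d:moduli} to the corresponding statements for $\F_{\Mod}$ established in Proposition~\ref{p:example-1}. The reduction rests on the fact that the forgetful functors $\Alg_{\P}(\Mod_R) \lrar \Mod_R$ create weak equivalences and fibrations, and commute with the right adjoints $f^*$. Throughout we work with the transferred (semi-)model structures on $\Alg_\P(\Mod_R)$. Since $\textbf{k}$ has characteristic $0$, these are full model structures, and every $\P$ is effectively $\Sigma$-cofibrant, so $\Alg_\P(\Mod_R)$ is homotopically well behaved.

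For condition (1), observe that $f^*$ on algebras is computed by restriction on underlying $R'$-modules. Hence it preserves weak equivalences by Proposition~\ref{p:example-1}(1). Now suppose $f$ is a weak equivalence. It suffices to show that the derived unit $A \lrar f^*f_!A$ is a weak equivalence for cofibrant $A$; together with conservativity of $f^*$, this gives a Quillen equivalence. We first treat free algebras $\P(V) = \bigoplus_n \P(n)\otimes_{\Sigma_n} V^{\otimes_R n}$ on cofibrant $V$. For these, $f_!\P(V) \cong \P(V\otimes_R R')$. The unit is then a sum of $\Sigma_n$-coinvariants of the maps $V^{\otimes_R n} \lrar (V\otimes_R R')^{\otimes_{R'} n} \cong V^{\otimes_R n}\otimes_R R'$. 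These are weak equivalences by (1) for modules, and taking coinvariants preserves them in characteristic $0$. A general cofibrant $A$ is a retract of a cell algebra, and cell attachments are filtered by the standard pushout filtration. Both $f_!$ and $f^*$ (the latter on underlying modules) preserve these homotopy colimits, so the claim follows.

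For condition (2), let $\F'$ denote the square $R \mapsto \Alg_\P(\Mod_R)$ and use Definition~\ref{d:car} exactly as in the proof of Proposition~\ref{p:example-1}. We must show two things: that $\RR\R^{\coc}$ detects weak equivalences between cofibrant sections, and that the derived unit $A \lrar \RR\R^{\coc}\L^{\coc}(A)$ is a weak equivalence. The key point is that the forgetful functors assemble into a functor of sections commuting with $\R^{\coc}$, since homotopy pullbacks of algebras are computed on underlying modules. For the unit, the underlying square is $A \lrar A\otimes_R S$, $A\otimes_R T \lrar A\otimes_R U$. Here $f_!A$ has underlying module $A\otimes_R S$ when $A$ is cofibrant. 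This identification holds on free algebras by the computation above and extends to cells by the filtration argument. Step (2a) of Proposition~\ref{p:example-1} then shows that this square is homotopy Cartesian in $\Mod_R$.

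For detection, a map of algebra sections is a weak equivalence iff its underlying map of module sections is one. The underlying module section of a fibrant algebra section is again fibrant, because the comparison maps are base-changes of underlying modules for cofibrant algebras. Detection therefore reduces to step (2b) of Proposition~\ref{p:example-1}. The main obstacle is making the identification $U(f_!A) \simeq U(A)\otimes_R R'$ precise for cofibrant algebras, for both the unit and the fibrancy comparison. I would handle it by proving once, by cell induction, that the forgetful functor intertwines the derived base change functors on algebras and on modules. Both steps then become formal consequences of Proposition~\ref{p:example-1}.
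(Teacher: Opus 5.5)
Your proposal is correct and follows the paper's route. Both conditions are reduced to Proposition~\ref{p:example-1} through the forgetful functor, using that $f_!$ and $f^*$ on $\P$-algebras are base change and restriction on underlying modules; your cell induction fills in what the paper leaves implicit in its ``hence'' and ``apply the proof of Proposition~\ref{p:example-1}''. One correction of emphasis: the underlying module of $f_!A$ is $A\otimes_R R'$ strictly for every $\P$-algebra $A$, because base change is strong symmetric monoidal. So what your induction actually has to supply is that cofibrant $\P$-algebras have cofibrant (hence flat) underlying $R$-modules, which makes this strict base change compute the derived one in both the unit and detection steps.
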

\begin{proof} Let $f : R \lrar R'$ be a map in $\CAlg^{\art}$. The right adjoint $f^*$ is simply the restriction functor, and therefore preserves weak equivalences. Moreover, as in the above proof, if $f : R \lrarsimeq R'$ is a weak equivalence, then $\adjunction*{}{\Mod_R}{\Mod_{R'}}{}$ is a Quillen equivalence, and hence the adjunction $f_! : \adjunction*{}{\Alg_{\P}(\Mod_R)}{\Alg_{\P}(\Mod_{R'})}{} : f^*$ is also a Quillen equivalence. Thus, we have verified the first condition of Definition~\ref{d:moduli}.
	
Moreover, note that the induced adjunction $f_! : \adjunction*{}{\Alg_{\P}(\Mod_R)}{\Alg_{\P}(\Mod_{R'})}{} : f^*$ is given on the underlying modules by the same adjunction $\adjunction*{}{\Mod_R}{\Mod_{R'}}{}$. This  allows us to apply the proof of Proposition~\ref{p:example-1} to verify the second condition of  Definition~\ref{d:moduli}. 
\end{proof}

Let $f : R \lrar R'$ be a map in $\CAlg^{\art}$. The induced adjunction $\adjunction*{}{\Mod_R}{\Mod_{R'}}{}$ gives rise to an adjunction between enriched categories:
\begin{equation}\label{eq:adjexample-3}
	f_! : \adjunction*{}{\Cat({\Mod_R})}{\Cat({\Mod_{R'}})}{} : f^*.
\end{equation}
Concretely, for each $\C\in\Cat({\Mod_R})$, the category $\C\otimes_{R} R' :=  f_!(\C) \in\Cat({\Mod_{R'}})$ has the same objects as $\C$, while its mapping objects are obtained by applying the functor $(-)\otimes_R R'$ to those of $\C$. Similarly, for  $\C'\in\Cat({\Mod_{R'}})$, $f^*(\C')\in\Cat({\Mod_{R}})$ has the same objects as $\C'$, with mapping objects induced by  restriction along $f$. For brevity, we again denote this category by $\C'$.

\begin{pro}\label{p:example-3} The functor
	\begin{equation}\label{FCat}
\F_{\Cat} : \CAlg^{\art} \lrar \ModCat, \; R \mapsto  \Cat({\Mod_R})
	\end{equation}
is a formal moduli context.
\end{pro}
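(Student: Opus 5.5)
We verify the two conditions of Definition~\ref{d:moduli}, reducing as much as possible to the module case of Proposition~\ref{p:example-1}. Throughout, $\Cat(\Mod_R)$ carries the Dwyer--Kan (semi-)model structure of Remark~\ref{r:modelchain}. In it, weak equivalences are enriched functors that are quasi-isomorphisms on mapping objects and essentially surjective on homotopy categories. Fibrations are detected by mapping objects together with an isofibration-type condition on $\pi_0$.

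\textbf{Condition (1).} Let $f : R \lrar R'$ be a map in $\CAlg^{\art}$. The right adjoint $f^*$ keeps the objects and restricts each mapping object along $f$. It therefore preserves quasi-isomorphisms on mapping objects. It also preserves essential surjectivity, since the homotopy category of $f^*\C'$ has morphisms $\pi_0$ of the same complexes. Hence $f^*$ preserves weak equivalences. Next, $f^*$ preserves fibrations and trivial fibrations, because these are defined mapping-objectwise plus a condition on $\pi_0$-level homotopy categories that is unchanged by restriction. So \eqref{eq:adjexample-3} is a Quillen adjunction.

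Now suppose $f$ is a weak equivalence. For cofibrant $\C$, the mapping objects of $\C$ are cofibrant $R$-modules, or at least become so after cofibrant replacement via cell structures. Since $\textbf{k}$ is a field, $-\otimes_R R'$ is then a quasi-isomorphism on them. Thus the derived unit $\C \lrar f^*(\C\otimes_R R')$ is the identity on objects and a quasi-isomorphism on mapping objects, hence a weak equivalence. Since $f^*$ also reflects weak equivalences, we conclude that $f_! \dashv f^*$ is a Quillen equivalence.

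\textbf{Condition (2).} Fix a square as in \eqref{e:square} with $p,q$ surjective on $\pi_0$. As in the proof of Proposition~\ref{p:example-1}, we must show two things. First, the derived unit $\C \lrar \RR\R^{\coc}\L^{\coc}(\C)$ is a weak equivalence for cofibrant $\C$. Second, $\RR\R^{\coc}$ detects weak equivalences between cofibrant sections.

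For the unit, the derived section attached to $\C$ is $(\C\otimes_R S,\C\otimes_R T,\C\otimes_R U)$, all with the same object set. Its homotopy limit can be computed on the common object set, with mapping objects equal to the homotopy pullbacks of the mapping complexes. By step (2a) of Proposition~\ref{p:example-1}, each square $\C(x,y)\otimes_R(-)$ is homotopy Cartesian. So the unit is the identity on objects and a quasi-isomorphism on mapping objects.

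For detection, take a bifibrant section $(\C_S,\C_T,\C_U)$ with equivalences $\C_T\otimes_T U \simeq \C_U \simeq \C_S\otimes_S U$. Form the homotopy pullback category $\D$: its objects are triples $(a,b,\alpha)$ with $\alpha$ a homotopy equivalence between the images of $a$ and $b$ in $\C_U$. Its mapping objects are homotopy pullbacks of the corresponding mapping complexes. A map of sections inducing a weak equivalence on these pullbacks should be a weak equivalence in each coordinate. On mapping objects this follows from the argument of (2b), applied to cofibers in $\Mod_S$, $\Mod_T$, $\Mod_U$ using Lemma~\ref{l:examlem}.

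\textbf{Main obstacle.} The main difficulty is essential surjectivity, i.e.\ the object-level part of condition (2). I would first show that for a $\pi_0$-surjective map $g$ of artinian algebras, the functor $\C \mapsto \C\otimes g$ induces a functor on homotopy categories that is conservative and reflects isomorphism classes. This is a nilpotence argument: an endomorphism whose image is invertible modulo a nilpotent ideal is invertible. This gives the key fact that every object of $\C_U$ lifts, up to equivalence, to $\C_S$ and to $\C_T$. Then every object of the section is in the essential image of $\D$, and essential surjectivity follows. It reduces the object-level statement to the module-level Lemma~\ref{l:examlem} together with a lifting-of-idempotents/units argument on $\pi_0$.
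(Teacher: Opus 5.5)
\textbf{Gap in the unit step.} Your overall plan matches the paper's. You verify condition (1) directly, and for condition (2) you check the derived unit and detection, reducing mapping objects to Proposition~\ref{p:example-1} and handling objects by a nilpotence argument. The problem is that the nilpotence input is used in the wrong step, so the unit step is unjustified.

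In the unit step you assert that the homotopy pullback of $\mathcal{C}\otimes_R S\to\mathcal{C}\otimes_R U\leftarrow\mathcal{C}\otimes_R T$ ``can be computed on the common object set''. This is not automatic for identity-on-objects cospans. Take two copies of the $\textbf{k}$-linear category on $\{x,y\}$ with no nonzero maps between $x$ and $y$. Map both to the category on $\{x,y\}$ with all mapping objects $\textbf{k}$, in which $x\cong y$. The homotopy pullback then contains an object $(x,y,\alpha)$ that is equivalent to no diagonal object.

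In general, objects of the homotopy pullback are triples $(x,y,\alpha)$ with $\alpha\colon x\simeq y$ in $\mathrm{Ho}(\mathcal{C}\otimes_R U)$. Essential surjectivity of the unit requires rectifying each triple to $(y,y,\mathrm{id})$. That means $\alpha$ must lift to an isomorphism in $\mathrm{Ho}(\mathcal{C}\otimes_R S)$ or $\mathrm{Ho}(\mathcal{C}\otimes_R T)$. This is exactly the isofibration statement of Lemma~\ref{l:iso}, which the paper feeds into \cite[Lemma 3.1.11]{YonatanCotangent}. Without it, your unit argument only proves full faithfulness.

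\textbf{How to repair it.} Your own nilpotence argument closes the gap, and it is a valid, more hands-on substitute for the paper's proof via Yoneda and Lemma~\ref{l:examlem}. For levelwise cofibrant $\mathcal{C}$:
\begin{enumerate}
\item $\mathrm{Ho}(\mathcal{C}\otimes_R S)\to\mathrm{Ho}(\mathcal{C}\otimes_R U)$ is bijective on objects.
\item It is surjective on hom-sets, since $\pi_0(M\otimes_R U)\cong\pi_0(M\otimes_R S)\otimes_{\pi_0 S}\pi_0 U$.
\item It is conservative. If $\bar f$ has inverse $\bar g$, any lift $g$ satisfies $gf-\mathrm{id}\in\mathfrak{a}\cdot\pi_0\mathrm{End}(x)$, where $\mathfrak{a}=\ker(\pi_0 S\to\pi_0 U)$. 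This ideal is nilpotent because $\mathfrak{a}$ is nilpotent and composition is $\pi_0 S$-bilinear. So $gf$, and likewise $fg$, are invertible.
\end{enumerate}
Hence every lift of an isomorphism is an isomorphism, which is the isofibration property you need. Move this lemma into the unit step.

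\textbf{The detection step.} Here conservativity is not what you need, and it does not give the ``key fact'' you attribute to it. That each object of $\mathcal{C}_U$ lifts up to equivalence to $\mathcal{C}_S$ and $\mathcal{C}_T$ follows from coCartesianness instead. The map $\mathcal{C}_S\otimes_S U\to\mathcal{C}_U$ is a Dwyer--Kan equivalence, hence essentially surjective, and $\mathcal{C}_S\to\mathcal{C}_S\otimes_S U$ is the identity on objects. With that, your triple model and the (2b) argument on mapping objects go through. This parallels the paper's use of Observation~\ref{ob:injfib} to obtain strict surjectivity on objects of $s(1,0)\to p^*s(1,1)$.
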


The proof requires a technical lemma. A map $\alpha:\C\lrar\D$ in $\Cat({\Mod_R})$ is called an \textbf{isofibration} if the induced functor $\Ho(\alpha):\Ho(\C)\lrar \Ho(\D)$ between homotopy categories (cf. $\S$\ref{s:bases}) is an isofibration in the usual sense. 
\begin{lem}\label{l:iso}
	Let $p: R \lrar S$ be a map in $\CAlg^{\art}$ that is surjective on $\pi_0$, and let $\C\in\Cat({\Mod_{R}})$ be a levelwise cofibrant $\Mod_R$-enriched category. Then the induced map $\C \lrar \C\otimes_{R}S$ is an isofibration in $\Cat({\Mod_{R}})$. More generally, for a chain $R\lrar S \lrar U$ of maps in $\CAlg^{\art}$ such that the map $S \lrar U$ is surjective on $\pi_0$, the induced map $\C \otimes_R S\lrar\C \otimes_R U $ is an isofibration in $\Cat({\Mod_{R}})$.
\end{lem}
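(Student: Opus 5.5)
The plan is to reduce the isofibration property to a lifting statement about isomorphisms in homotopy categories, and then to a lifting statement for elements of $\pi_0$ of mapping complexes. Recall that $\Ho(\alpha)$ is an isofibration if every isomorphism $g : \alpha(x) \cong y'$ in $\Ho(\D)$ lifts to an isomorphism $x \cong y$ in $\Ho(\C)$ with $\alpha(y)=y'$. For $\alpha : \C \lrar \C\otimes_R S$ the map is the identity on objects, so $y'=\alpha(y)$ with $y := y'$. Thus it suffices to prove the following: given objects $x,y$ of $\C$ and an isomorphism $g \in \Hom_{\Ho(\C\otimes_R S)}(x,y) = \pi_0(\C(x,y)\otimes_R S)$, there exists an isomorphism in $\Ho(\C)$ mapping to $g$.

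First I check surjectivity on $\pi_0$ of mapping complexes. Since $\C(x,y)$ is cofibrant in $\Mod_R$ and everything is connective, there is a canonical isomorphism
$$\pi_0(\C(x,y)\otimes_R S) \cong \pi_0\C(x,y)\otimes_{\pi_0R}\pi_0S.$$
This identifies the map on $\pi_0$ with the quotient by $\mathfrak{a}\,\pi_0\C(x,y)$, where $\mathfrak{a} = \ker(\pi_0R\lrar\pi_0S)$ as in the proof of Lemma~\ref{l:examlem}. Hence the map is surjective. Choose lifts $f \in \pi_0\C(x,y)$ of $g$ and $h \in \pi_0\C(y,x)$ of $g^{-1}$.

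The main step is to show that $f$ is invertible in $\Ho(\C)$. Write $hf = \id_x + a$ with $a \in \mathfrak{a}\,\pi_0\C(x,x)$. Because $\pi_0 R$ is local artinian, $\mathfrak{a}$ is nilpotent, say $\mathfrak{a}^N=0$. Composition in $\Ho(\C)$ is $\pi_0R$-bilinear, so products of elements of $\mathfrak{a}\,\pi_0\C(x,x)$ lie in $\mathfrak{a}^k\pi_0\C(x,x)$; in particular $a^N=0$. Then $\id_x+a$ is invertible, with inverse $\sum_{k<N} (-a)^k$, and therefore $f$ has a left inverse. Symmetrically, $fh = \id_y + b$ is invertible, so $f$ has a right inverse. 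Hence $f$ is an isomorphism lifting $g$. The only care needed here is to check the bilinearity of composition over $\pi_0 R$ and the Künneth-type identification of $\pi_0$, which uses connectivity.

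For the general chain $R \lrar S \lrar U$, I apply the same argument to $\C' := \C\otimes_R S$, viewed in $\Cat(\Mod_S)$. It is levelwise cofibrant over $S$, and $\C'\otimes_S U \cong \C\otimes_R U$. This gives an isofibration of homotopy categories, which is the condition for being an isofibration in $\Cat(\Mod_R)$, since the homotopy category does not depend on whether $R$ or $S$ is used as the base. The genuine obstacle is the $\pi_0$ identification for $\C(x,y)\otimes_R S$. Cofibrancy gives that this is the derived tensor product, and right exactness of $\pi_0$ for connective modules gives the formula.
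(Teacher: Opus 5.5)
Your proof is correct. It shares the paper's first reduction: $\C \lrar \C\otimes_R S$ is the identity on objects, and $\pi_0\C(x,y) \lrar \pi_0\C(x,y)\otimes_{\pi_0R}\pi_0S$ is the quotient by $\mathfrak{a}\,\pi_0\C(x,y)$. So in both arguments everything comes down to showing that $\Ho(\C)\lrar\Ho(\C\otimes_R S)$ reflects isomorphisms.

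From there the routes differ. The paper passes to mapping complexes. It uses that $f$ is invertible iff $f\circ(-)\colon\Map_\C(z,x)\lrar\Map_\C(z,y)$ is a quasi-isomorphism for every $z$. Since $(-)\otimes_R S$ preserves homotopy cofibers of maps between cofibrant $R$-modules, this reduces to the Nakayama-type Lemma~\ref{l:examlem} applied to the cofiber. You instead stay inside the homotopy category. There $\mathfrak{a}\,\pi_0\C(x,x)$ is a nilpotent two-sided ideal of the ring $\pi_0\C(x,x)$, by $\pi_0R$-bilinearity of composition. Hence $hf$ and $fh$ are unipotent, which gives an explicit inverse of $f$.

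What each buys:
\begin{itemize}
\item Your route is more elementary and self-contained.
\item Your route does not use the levelwise cofibrancy of $\C$. The step you flag as the genuine obstacle, $H_0(M\otimes_R S)\cong H_0(M)\otimes_{H_0R}H_0(S)$, holds for any connective dg $R$-module by right exactness alone. The paper needs cofibrancy for its cofiber argument.
\item The paper's route reuses the lemma already needed for Proposition~\ref{p:example-1}, so the whole section runs on a single mechanism.
\end{itemize}
Both ultimately rest on the same fact, the nilpotence of $\mathfrak{a}$ in the local artinian ring $\pi_0R$. Your base change to $\C\otimes_R S\in\Cat(\Mod_S)$ for the chain $R\lrar S\lrar U$ also spells out what the paper leaves as ``verified in the same manner''.
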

\begin{proof} First, by the free-forgetful adjunction $\adjunction*{}{\C_{\geq 0}(\textbf{k})}{\Mod_R}{}$,  the homotopy category of a $\Mod_R$-enriched category coincides with that of its underlying $\C_{\geq 0}(\textbf{k})$-enriched category. Unwinding the definitions, the functor $\Ho(\C) \lrar \Ho(\C \otimes_R S)$ is the identity on objects, and for each pair $(x,y)$ of objects, the induced map $\Hom_{\Ho(\C)}(x,y) \lrar \Hom_{\Ho(\C \otimes_R S)}(x,y)$ is identified with the projection
	$$ \pi_0\Map_\C(x,y) \lrar \pi_0\Map_\C(x,y)\otimes_{\pi_0R} \pi_0S \cong \pi_0\Map_\C(x,y)/\mathfrak{a}(\pi_0\Map_\C(x,y))$$
where  $\mathfrak{a} := \ker(\pi_0(R) \lrar \pi_0(S))$. Therefore, it suffices to show that any map $f: x \lrar y$ in $\Ho(\C)$  is an isomorphism whenever the corresponding map $\ovl{f} : x \lrar y$ in $\Ho(\C \otimes_R S)$ is one. Moreover, $f$ is an isomorphism if and only if, for every object $z\in\Ob(\C)$, the induced map $f\circ (-):\Map_\C(z,x) \lrar \Map_\C(z,y)$ is a weak equivalence. (An analogous statement holds for $\ovl{f}$.) Therefore, it remains to show that $f\circ (-)$ is a weak equivalence as soon as the induced map $$\Map_\C(z,x) \otimes_{R} S \lrar \Map_\C(z,y)\otimes_{R} S$$ is one. Now, since the functor $(-)\otimes_{R} S$ preserves homotopy cofibers of maps between cofibrant $R$-modules, the problem reduces to proving that any $M \in \Mod_R$ is acyclic whenever $M\otimes_{R} S$ is. This  follows from Lemma \ref{l:examlem}. The second claim is verified in the same manner.
\end{proof}

\begin{proof}[\underline{Proof of Proposition~\ref{p:example-3}}] 
	
	As before, the first condition of Definition~\ref{d:moduli} is readily verified. We next verify the second by showing that for a square of the type \eqref{e:square}, the induced square of model categories
	\begin{equation}\label{e:square-4}
		\xymatrix{
			\Cat({\Mod_R}) \ar[r]\ar[d] & \Cat({\Mod_S}) \ar[d]^{p_!} \\
			\Cat({\Mod_T}) \ar[r]_{q_!} & \Cat({\Mod_U}) \\
		}
	\end{equation}
	is homotopy Cartesian. Let $\F': [1] \times [1] \lrar \ModCat$ represent the square~\eqref{e:square-4} and let $\I \subseteq [1] \times [1]$ denote the full subcategory spanned by $(0,1),(1,0)$ and $(1,1)$. We need to show that the adjunction $\L^{\coc} : \adjunction*{}{\Cat({\Mod_R})}{\Sec^{\coc}(\F'|_{\I})}{} : \R^{\coc}$ is a Quillen equivalence. We will prove that $\R^{\coc}$ detects weak equivalences between bifibrant objects, and that the derived unit $\C \lrar \RR \R^{\coc} \L^{\coc}(\C)$ is a weak equivalence for every cofibrant object $\C \in \Cat({\Mod_R})$.

	Let us start with the second claim. By Remark~\ref{r:car}, it suffices to show that the induced square
	\begin{equation}\label{e:C}
		\xymatrix{
			\C \ar[r]\ar[d] & \C \otimes_R S \ar[d] \\
			\C \otimes_R T \ar[r] & \C \otimes_R U \\
		}
	\end{equation}
	is homotopy Cartesian in $\Cat({\Mod_R})$. By \cite[Lemma 3.1.11]{YonatanCotangent}, it remains to check that for all $x,y \in \Ob(\C)$ the induced square of mapping objects 
	$$ \xymatrix{
		\Map_\C(x,y) \ar[r]\ar[d] & \Map_\C(x,y) \otimes_R S \ar[d] \\
		\Map_\C(x,y) \otimes_R T \ar[r] & \Map_\C(x,y) \otimes_R U \\
	} $$
	is homotopy Cartesian, and that the induced maps $\C \otimes_R T \lrar \C \otimes_R U$ and $\C \otimes_R S \lrar \C \otimes_R U$ are isofibrations. The former is included in the proof of Proposition~\ref{p:example-1}, while the latter follows from Lemma~\ref{l:iso}.

	We next show that $\R^{\coc}$ detects weak equivalences between bifibrant objects. Let $f: s \rar s'$ be a map between bifibrant objects in $\Sec^{\coc}(\F'|_{\I})$ such that $\R^{\coc}(s) \lrar \R^{\coc}(s')$ is a weak equivalence in $\Cat({\Mod_R})$. We need to show that $f$ itself is a weak equivalence. Since $s$ is in particular coCartesian, the homotopy type of $s(1,1)$ is determined by either $s(0,1)$ or $s(1,0)$ via (derived) tensoring; the same holds for $s'$. Thus it remains to show that $f_{0,1}: s(0,1) \lrar s'(0,1)$ and $f_{1,0} : s(1,0) \lrar s'(1,0)$ are weak equivalences (in $\Cat({\Mod_T})$ and $\Cat({\Mod_S})$, respectively). By symmetry, it suffices to treat $f_{0,1}$. We first show that $f_{0,1}$ is fully faithful. Let $x,y$ be two objects of $s(0,1) \in \Cat({\Mod_T})$ and let $x',y'$ denote their images in $s(1,1) \in \Cat({\Mod_U})$. The map $s(1,0) \lrar p^*s(1,1)$ is a fibration (cf. Observation \ref{ob:injfib}), hence in particular an isofibration; while its adjoint $p_!s(1,0) \lrar s(1,1)$ is a weak equivalence and therefore essentially surjective. These facts together imply that $s(1,0) \lrar p^*s(1,1)$ is surjective on objects. Let $x'',y'' \in s(1,0)$ be objects which map to $x',y' \in s(1,1)$ respectively. Since $\R^{\coc}(s) \lrar \R^{\coc}(s')$ is a weak equivalence, the map  
	$$ \Map_{s(0,1)}(x,y) \times^{\der}_{\Map_{s(1,1)}(x',y')} \Map_{s(1,0)}(x'',y'') \lrarsimeq \Map_{s'(0,1)}(x,y) \times^{\der}_{\Map_{s'(1,1)}(x',y')} \Map_{s'(1,0)}(x'',y'') $$
	is a weak equivalence in $\Mod_R$. Following the  proof of Proposition~\ref{p:example-1}, the map $\Map_{s(0,1)}(x,y) \lrar \Map_{s'(0,1)}(x,y)$ is a weak equivalence. Therefore  $f_{0,1}$ is fully faithful. It remains to prove that it is essentially surjective. Let $z$ be an object of $s'(0,1) \in \Cat({\Mod_T})$ and let $z'$ denote its image in $s'(1,1)$. As above, there exists $z'' \in s'(1,0)$ mapping to $z'$. Hence we obtain an object $\omega:=(z,z',z'')$ of  $\R^{\coc}(s')=s'(0,1)\times^{h}_{s'(1,1)}s'(1,0)$ in $\Cat({\Mod_R})$. Since $\R^{\coc}(s) \lrar \R^{\coc}(s')$ is essentially surjective, there exists an object $\ovl{\omega} \in \R^{\coc}(s)$ whose image in $\R^{\coc}(s')$ is equivalent to $\omega$. Now, we have a commutative square 
	\begin{equation}
		\xymatrix{
			\R^{\coc}(s) \ar[r]^{\varphi}\ar[d] & s(1,0) \ar[d]^{f_{0,1}} \\
			\R^{\coc}(s') \ar[r]_{\varphi'} & s'(1,0) \\
		}
	\end{equation}
	where the left vertical map takes $\ovl{\omega}\in \R^{\coc}(s)$ to $\omega\in\R^{\coc}(s')$ (up to equivalence), and the bottom horizontal map takes $\omega$ to $z$. The commutativity of this square implies that $z$ is equivalent to the image under $f_{0,1}$ of $\varphi(\ovl{\omega})\in s(1,0)$. Thus $f_{0,1}: s(0,1) \lrar s'(0,1)$ is essentially surjective, as required.
\end{proof}

Finally, we arrive at the main example of interest. Let $f : R \lrar R'$ be a map in $\CAlg^{\art}$. This induces an adjunction $f_! : \adjunction*{}{\Op({\Mod_R})}{\Op({\Mod_{R'}})}{} : f^*$ between enriched operads, extending \eqref{eq:adjexample-3}. As in that case, we set $(-)\otimes_R R' :=f_!$, and for each  $\O'\in\Op({\Mod_{R'}})$, we use the same notation for its image under $f^*$.
\begin{pro}\label{p:example-4} The functor
	\begin{equation}\label{FOp}
		\F_{\Op} : \CAlg^{\art} \lrar \ModCat, \; R \mapsto  \Op({\Mod_R})
	\end{equation}
 is a formal moduli context.
	\begin{proof} Verifying the first condition of Definition~\ref{d:moduli} is straightforward. The second will mainly rely on Propositions~\ref{p:example-1} and~\ref{p:example-3}. Suppose given a square of the type \eqref{e:square}. We need to show that the induced square of model categories
		\begin{equation}\label{e:square-5}
			\xymatrix{
				\Op(\Mod_R) \ar[r]\ar[d] & \Op(\Mod_S) \ar[d]^{p_!} \\
				\Op(\Mod_T) \ar[r]_{q_!} & \Op(\Mod_U) \\
			}
		\end{equation}
		is homotopy Cartesian. It is required to prove that the adjunction 
		$$ \L^{\coc} :  \adjunction*{}{\Op(\Mod_R)}{\Sec^{\coc}(\F'|_{\I})}{}: \R^{\coc}$$
		is a Quillen equivalence, where as usual $\I \subseteq [1] \times [1]$ denotes the full subcategory spanned by $(0,1),(1,0)$ and $(1,1)$; and  $\F':[1] \times [1] \lrar \ModCat$ is the square~\eqref{e:square-5}.

		We first show that the derived unit $\O \lrar \RR \R^{\coc}\L^{\coc}(\O)$ is a weak equivalence for every cofibrant object $\O \in \Op(\Mod_R)$. By Remark~\ref{r:car}, this is equivalent to saying that the square
		\begin{equation}\label{e:op}
			\xymatrix{
				\O \ar[r]\ar[d] & \O \otimes_R S \ar[d] \\
				\O \otimes_R T \ar[r] & \O \otimes_R U \\
			}
		\end{equation}
		is homotopy Cartesian in $\Op(\Mod_R)$. Following the proof of \cite[Lemma 4.1.3]{Hoang}, it suffices to verify that the induced square of underlying categories (cf. Remark \ref{r:catop})
		$$ \xymatrix{
			\O_1 \ar[r]\ar[d] & \O_1 \otimes_R S \ar[d] \\
			\O_1 \otimes_R T \ar[r] & \O_1 \otimes_R U \\
		} $$
		is homotopy Cartesian in $\Cat(\Mod_R)$, and that the induced square of spaces of operations 
		$$ \xymatrix{
			\O(c_1,\cdots,c_n ; c) \ar[r]\ar[d] & \O(c_1,\cdots,c_n ; c) \otimes_R S \ar[d] \\
			\O (c_1,\cdots,c_n ; c)\otimes_R T \ar[r] & \O(c_1,\cdots,c_n ; c) \otimes_R U \\
		} $$
		is homotopy Cartesian in $\Mod_R$ for every tuple $(c_1,\cdots,c_n;c)$ of colors of $\O$. The former is addressed in the proof of Proposition \ref{p:example-3}, while the latter follows from that of Proposition \ref{p:example-1}.

		It remains to check that $\R^{\coc}$ detects weak equivalences between bifibrant objects. Note that a map $f : \O \lrar \O'$ in $\Op({\Mod_R})$ is a weak equivalence precisely when the induced map $\O_1 \lrar \O'_1$ of underlying categories is a weak equivalence in $\Cat({\Mod_R})$, and for every tuple $(c_1,\cdots,c_n;c)$ the corresponding map $$\O(c_1,\cdots,c_n ; c) \lrar \O'(f(c_1),\cdots,f(c_n) ; f(c))$$ is one in $\Mod_R$. 
	Therefore, the expected property of $\R^{\coc}$ follows directly from that of the functors $\R^{\coc} : \Sec^{\coc}(\F'|_{\I}) \lrar \Cat(\Mod_R)$ (Proposition \ref{p:example-3}) and $\R^{\coc} : \Sec^{\coc}(\F'|_{\I}) \lrar \Mod_R$ (Proposition \ref{p:example-1}).
\end{proof}
\end{pro}

\begin{rem}\label{r:refinemain} Let $\F$ be any of the functors $\F_{\Alg}$, $\F_{\Cat}$, or $\F_{\Op}$. Then every object $X \in \F(\textbf{k})$ is automatically fibrant. Combining this  with Remark \ref{r:specMX}, we conclude that Theorem~\ref{t:main} holds for every $X \in \F(\textbf{k})$.
\end{rem}

\subsection{Deformation theory of dg $\E_n$-operads}\label{s:defEn}

Combining the main results of $\S$$\S$\ref{s:Qprin}-\ref{s:defandQcohom}, we may now relate the deformation theory of the operad $\mathbb{E}_n \in \Op(\C(\textbf{k}))$ (Notation \ref{no:En}) to its Hochschild cohomology. 

Consider the formal moduli context
$$ 		\F_{\Op} : \CAlg^{\art} \lrar \ModCat, \; R \mapsto  \Op({\Mod_R})$$
(cf. Proposition \ref{p:example-4}), and let $\P \in \F_{\Op}(\textbf{k}) \overset{\defi}{=} \Op(\C_{\geq 0}(\textbf{k}))$ be given.

\begin{cons}\label{con:PotimesA} Let $A \in \C_{\geq 0}(\textbf{k})$ be a finite-dimensional (connective) dg $\textbf{k}$-module. For each $n \in \NN$, the square-zero extension $\textbf{k} \ltimes A[n]$  is an artinian $\textbf{k}$-algebra. Consider the $\Omega$-spectrum $M \in \Sp(\CAlg^{\art})$ defined by $M_{n,n} = \textbf{k} \ltimes A[n]$ and $M_{n,m}=\textbf{k}$ for $n\neq m$. This gives rise to an $\Omega$-spectrum $M(\P) \in \T_\P\Op(\C_{\geq 0}(\textbf{k}))$  (cf. Lemma~\ref{l:specMX} and Remark \ref{r:specMX}), in which $$M(\P)_{n,n} = \P \ltimes (\P \otimes A[n])$$ the square-zero extension of $\P$ by $\P \otimes A[n] \in \IbMod(\P)$ (see Remark \ref{r:OmP}). More explicitly, the latter is given at each tuple $(c_1,\cdots,c_r ; c)$ of colors of $\P$ by $$(\P \otimes A[n])(c_1,\cdots,c_r ; c) := \P(c_1,\cdots,c_r ; c)\otimes A[n],$$ 
with the infinitesimal $\P$-bimodule structure naturally induced by the composition in $\P$.
\end{cons}

We obtain a consequence of Theorem \ref{t:main} as follows. 
\begin{cor}\label{co:defop} Suppose further that $\P$ is $\Sigma$-cofibrant. There is a weak equivalence of spaces
	$$  \Def(\P,\textbf{k} \ltimes A) \simeq \HHQ^\star(\P; \P \otimes A[1])$$
where the space on the right is the Quillen cohomology of $\P \in\Op(\C(\textbf{k}))$ with coefficients in $\P \otimes A[1] \in \IbMod(\P)$ (cf. $\S$\ref{s:cotandg}).   	
\begin{proof} By Remark \ref{r:refinemain}, Theorem \ref{t:main} applies to $\P$ without (co)fibrancy requirement. Moreover, if we regard $\P$ as an object of $\Op(\C(\textbf{k}))$ (rather than $\Op(\C_{\geq 0}(\textbf{k}))$), then the $\Sigma$-cofibrancy assumption yields a right Quillen equivalence $$\T_\P\Op(\C(\textbf{k})) \lrarsimeq \IbMod(\P)$$ 
(cf. Theorem \ref{t:keylem}). We complete the proof by observing that, under this functor, the $\Omega$-spectrum $M(\P) \in \T_\P\Op(\C(\textbf{k}))$ corresponds to $\P \otimes A \in \IbMod(\P)$.
\end{proof}
\end{cor}

\begin{example} In the case $A = \textbf{k}$, $\Def(\P,\textbf{k} \ltimes A)$ represents the space of deformations of $\P$ over the \textbf{algebra of dual numbers} $\textbf{k} \ltimes \textbf{k} \cong \textbf{k}[t]/(t^2)$. Consequently, the space $\Def(\P,\textbf{k}[t]/(t^2))$ is  weakly equivalent to $\HHQ^\star(\P; \P^{\si}[1])$ the Quillen cohomology of $\P$ with coefficients in $\P^{\si}[1]\in \IbMod(\P)$.  
\end{example}

Corollaries \ref{co:defop}, \ref{co:QprinEn}, and \ref{co:QprinEnm} together yield the following conclusion.
\begin{thm}\label{co:defEn} There is a fiber sequence of spaces
	$$ \Om^{n+2}\Def(\mathbb{E}_n,\textbf{k} \ltimes A) \lrar \HHH^\star(\mathbb{E}_n ; \mathbb{E}_n \otimes A) \lrar |A|.$$
Moreover, for $k\geq -n-1$ and $k \neq 0$, there is a canonical isomorphism
$$ \pi_{k+n+2}\Def(\mathbb{E}_n,\textbf{k}[t]/(t^2))  \cong \HHH^{-k}(\mathbb{E}_n ; \mathbb{E}_n^{\si}),$$
and, in addition,
$$  \HHH^{0}(\mathbb{E}_n ; \mathbb{E}_n^{\si}) \cong \pi_{n+2}\Def(\mathbb{E}_n,\textbf{k}[t]/(t^2))  \oplus \textbf{k}.$$
\end{thm}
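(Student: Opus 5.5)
The plan is to combine Corollary~\ref{co:defop} with Corollary~\ref{co:QprinEn} applied to the coefficient module $M = \mathbb{E}_n\otimes A$, and then specialize to $A=\textbf{k}$ using Corollary~\ref{co:QprinEnm}.

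\textbf{Step 1.} Since $\mathbb{E}_n$ is adequate, hence $\Sigma$-cofibrant, Corollary~\ref{co:defop} gives
\[
\Def(\mathbb{E}_n,\textbf{k}\ltimes A)\simeq \HHQ^\star(\mathbb{E}_n;(\mathbb{E}_n\otimes A)[1]).
\]
Because $\HHQ^\star(\mathbb{E}_n;-)=\Map^{\h}(\rL_{\mathbb{E}_n},-)$ is computed in the stable category $\IbMod(\mathbb{E}_n)$, shifting the coefficients commutes with loops:
\[
\Om\,\HHQ^\star(\P;N[1])\simeq \HHQ^\star(\P;N).
\]
Hence
\[
\Om^{n+2}\Def(\mathbb{E}_n,\textbf{k}\ltimes A)\simeq \Om^{n+1}\HHQ^\star(\mathbb{E}_n;\mathbb{E}_n\otimes A).
\]
This is the only place where the degree bookkeeping needs care. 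One must check that the $[1]$ shift appearing in Theorem~\ref{t:main} matches the shift in Corollary~\ref{co:defop}, since the spectrum $M(\P)$ corresponds to $\P\otimes A$ in $\IbMod(\P)$.

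\textbf{Step 2.} Plug this identification into the fiber sequence of Corollary~\ref{co:QprinEn} with $M=\mathbb{E}_n\otimes A$. The base of that sequence is
\[
|M(0)|=|\mathbb{E}_n(0)\otimes A|=|A|,
\]
using $\mathbb{E}_n(0)\cong\textbf{k}$. This yields the fiber sequence
\[
\Om^{n+2}\Def(\mathbb{E}_n,\textbf{k}\ltimes A)\lrar \HHH^\star(\mathbb{E}_n;\mathbb{E}_n\otimes A)\lrar |A|.
\]

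\textbf{Step 3.} Take $A=\textbf{k}$, so that $\textbf{k}\ltimes\textbf{k}=\textbf{k}[t]/(t^2)$ and $\mathbb{E}_n\otimes A=\mathbb{E}_n^{\si}$. Step 1 then gives
\[
\pi_{k+n+2}\Def(\mathbb{E}_n,\textbf{k}[t]/(t^2))\cong \pi_{k}\,\Om^{n+1}\HHQ^\star(\mathbb{E}_n;\mathbb{E}_n^{\si}) \cong \HHQ^{-k-n-1}(\mathbb{E}_n;\mathbb{E}_n^{\si}).
\]
Here $k+n+2\geq 1$ exactly when $k\geq -n-1$, which is the range where $\pi_{k+n+2}$ is a group and the identification with negative-shift Quillen cohomology groups holds, as in Theorem~\ref{t:main}. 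For negative $k$ in this range the shift bookkeeping is the same, via
\[
\pi_j\Def\cong\HHQ^{1-j}(\mathbb{E}_n;\mathbb{E}_n^{\si}[1])=\HHQ^{-j}(\mathbb{E}_n;\mathbb{E}_n^{\si})
\]
with $j=k+n+2$.

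Now apply Corollary~\ref{co:QprinEnm} with $m=n$:
\[
\HHQ^{-k-n-1}(\mathbb{E}_n;\mathbb{E}_n)\cong \HHH^{-k}(\mathbb{E}_n;\mathbb{E}_n^{\si})\quad\text{for } k\neq 0,
\]
\[
\HHH^0(\mathbb{E}_n;\mathbb{E}_n^{\si})\cong \HHQ^{-n-1}(\mathbb{E}_n;\mathbb{E}_n)\oplus\textbf{k}.
\]
Substituting $\HHQ^{-n-1}\cong\pi_{n+2}\Def$ gives the final decomposition.

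The main obstacle is Step 1's shift consistency, namely matching the indexing of Theorem~\ref{t:main} with that of Corollary~\ref{co:defop}. Apart from that, everything is a direct assembly of earlier results.
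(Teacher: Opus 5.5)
Your proposal follows the paper's own argument. There the theorem is obtained simply by combining Corollaries~\ref{co:defop}, \ref{co:QprinEn} and \ref{co:QprinEnm} (with $m=n$), exactly as in your Steps 1--3, and Steps 1 and 2 are correct as written.

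There is one index slip, in your aside on negative $k$ in Step 3. You wrote
$$\pi_j\Def\cong\HHQ^{1-j}(\mathbb{E}_n;\mathbb{E}_n^{\si}[1])=\HHQ^{-j}(\mathbb{E}_n;\mathbb{E}_n^{\si}).$$
This has the shift backwards. With $j=k+n+2$ it would give $\HHQ^{-k-n-2}$, which contradicts your own first display.

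The correct bookkeeping is
$$\pi_j\Def\cong\HHQ^{-j}(\mathbb{E}_n;\mathbb{E}_n^{\si}[1])\cong\HHQ^{1-j}(\mathbb{E}_n;\mathbb{E}_n^{\si}).$$
This is just Theorem~\ref{t:main} read directly. For $j=k+n+2$ it gives $\HHQ^{-k-n-1}(\mathbb{E}_n;\mathbb{E}_n^{\si})$ uniformly for all $k\geq -n-1$. So you do not need to treat $k\geq 0$ (via $\pi_k\Om^{n+1}$) and negative $k$ separately.
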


\medskip

\appendix

\section{Homotopy Cartesian squares of model categories}\label{s:hocartmodel}

This appendix provides material for $\S$\ref{s:defandQcohom}. Recall that $\ModCat$ denotes the category whose objects are model categories and whose morphisms are Quillen adjunctions, with  sources and targets being those of the left Quillen functors. Let $\F: \I \lrar \ModCat$ be a diagram of model categories indexed by a small category $\I$. 

\begin{define}\label{d:section} A \textbf{section} of $\F$ is a section of the natural projection $\int_{\I}\F \lrar\I$ from the Grothendieck construction of $\F$ to $\I$. More explicitly, a section consists of an object $s(i) \in \F(i)$ for each $i \in \I$, together with a morphism $f_\alp: \alp_!s(i) \lrar s(j)$ for each map $\alp: i \rar j$ in $\I$, subject to a natural compatibility constraint for every composable pair of morphisms in $\I$. We denote by $\Sec(\F)$ the \textbf{category of sections} of $\F$.
\end{define}

Suppose that each $\F(i)$ is a combinatorial model category. According to \cite{Barwick}, one can then endow $\Sec(\F)$ with either the projective or  injective model structure (see also \cite{Harpazlax} and \cite{Balzin}).  Here, we are interested in the injective model structure, denoted $\Sec(\F)^{\inj}$, in which a map $s \rar s'$ is a cofibration (resp. weak equivalence) if and only if $s(i) \lrar s'(i)$ is a cofibration (resp. weak equivalence) for every $i \in \I$.

\begin{define}\label{d:cocartsec} A section $s: \I \lrar \int_{\I}\F$ is called \textbf{coCartesian} if the composite map 
	$$ \alp_!(s(i)^{\cof}) \lrar \alp_!s(i) \x{f_\alp}{\lrar} s(j) $$ 
	is a weak equivalence for every $\alp: i \rar j$ in $\I$, where $s(i)^{\cof} \lrar s(i)$ is a cofibrant replacement of $s(i)\in\F(i)$. 
\end{define}

\begin{rem} When all model categories $\F(i)$ are in addition left proper, $\Sec(\F)^{\inj}$ is also left proper. One can then left Bousfield localize $\Sec(\F)^{\inj}$, so that new fibrant objects are precisely the injective fibrant coCartesian sections. We denote the resulting model structure by $\Sec(\F)^{\coc}$. If the $\F(i)$ are not all left proper, we may still define the localization $\Sec(\F)^{\coc}$ as a semi-model category (cf. \cite{White}). In either case, $\Sec(\F)^{\coc}$ has the right type, in the sense that its underlying $\infty$-category $\Sec(\F)^{\coc}_\infty$ is a model for the limit of the diagram $$\I \ni i \mapsto \F(i)_\infty \in \Cat_\infty$$ where for each  $\alp: i \rar j$ the functor $\F(i)_\infty \lrar \F(j)_\infty$ is given by $(\alp_!)_\infty$ (cf. \cite[Corollary 3.45]{Balzin}).
\end{rem}
 
\begin{cons}\label{con:squarecat} Let $\F: [1] \times [1] \lrar \ModCat$ be a diagram of combinatorial model categories on the \textit{square category}. Denote by $\I \subseteq [1] \times [1]$ the full subcategory spanned by $(0,1),(1,0)$, and $(1,1)$; and denote by $\alp_i: (0,0) \lrar i$ the unique map from $(0,0)$ to an object $i \in \I$. We then obtain a functor $\L: \F(0,0) \lrar \Sec(\F|_{\I})$ which sends an object $X \in \F(0,0)$ to the section $$\L(X)(i) := (\alp_i)_!X \in \F(i).$$ This functor admits a right adjoint $\R: \Sec(\F|_{\I}) \lrar \F(0,0)$ taking a section $s: \I \lrar \int_{\I} \F$ to the pullback 
	$$ \R(s) := \lim_{i \in \I}\alp_i^*s(i)  \in \F(0,0).$$
	Finally, since $\L$ sends (trivial) cofibrations to injective (trivial) cofibrations, we deduce that $\L\dashv \R$ is a Quillen adjunction.
\end{cons}

\begin{define}\label{d:car}
	Let $\F: [1] \times [1] \lrar \ModCat$ be a diagram of combinatorial model categories. We will say that $\F$ is \textbf{homotopy Cartesian} if the composed left Quillen functor $$\L^{\coc}:= [\F(0,0) \x{\L}{\lrar} \Sec(\F|_{\I})^{\inj} \lrar \Sec(\F|_{\I})^{\coc}]$$ is a left Quillen equivalence, in which $\Sec(\F|_{\I})^{\inj} \lrar \Sec(\F|_{\I})^{\coc}$ is the identity functor, which is a left Quillen functor by construction. We write $\R^{\coc}: \Sec(\F|_{\I})^{\coc} \lrar \F(0,0)$ for the corresponding right Quillen functor.
\end{define}

\begin{rem}\label{r:right}
	Let $\F: [1] \times [1] \lrar \ModCat$ be a diagram of combinatorial model categories depicted as the square of left Quillen functors
	$$ \xymatrix{
		\F(0,0) \ar[r]\ar[d] & \F(0,1) \ar[d] \\
		\F(1,0)  \ar[r] & \F(1,1).  \\
	}$$
	If $\F$ is homotopy Cartesian in the sense of Definition~\ref{d:car}, then the corresponding square of underlying $\infty$-categories
	$$ \xymatrix{
		\F(0,0)_\infty \ar[r]\ar[d] & \F(0,1)_\infty \ar[d] \\
		\F(1,0)_\infty  \ar[r] & \F(1,1)_\infty  \\
	}$$
	is homotopy Cartesian as well. This follows from the fact that the model category $\Sec(\F|_{\I})^{\coc}$ has the right type, as mentioned above.
\end{rem}

\begin{obs}\label{ob:injfib} Let $\G : \I \lrar \ModCat$ be a diagram of combinatorial model categories, with $\I \subseteq [1] \times [1]$ the category described in Construction \ref{con:squarecat}. Let $s \in \Sec(\G)$ be a section of $\G$. If $s$ is injective fibrant, then the structure maps $s(0,1) \lrar \beta^{*}s(1,1)$ and $s(1,0) \lrar \gamma^{*}s(1,1)$ are fibrations, where $\beta : (0,1) \lrar (1,1)$ and $\gamma : (1,0) \lrar (1,1)$ are   morphisms in $\I$.
\end{obs}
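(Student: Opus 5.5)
The plan is to check the right lifting property directly: I will turn a lifting problem for $s(0,1) \lrar \beta^{*}s(1,1)$ in $\G(0,1)$ into a lifting problem for $s \lrar *$ in $\Sec(\G)^{\inj}$. Here $*$ is the terminal section; it is levelwise terminal, since each $\alpha^*$ preserves terminal objects. By symmetry the argument for $\gamma$ is identical, so I treat only $\beta$.

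Let $j : A \lrar B$ be a trivial cofibration in $\G(0,1)$, and suppose given a commutative square with top map $a : A \lrar s(0,1)$ and bottom map $b : B \lrar \beta^{*}s(1,1)$. Write $b^\flat : \beta_!B \lrar s(1,1)$ for the adjoint of $b$. I will build two sections of $\G$.
\begin{enumerate}[(i)]
	\item The section $v$ is given by $v(0,1) = A$, $v(1,0) = \emptyset$ (the initial object of $\G(1,0)$) and $v(1,1) = \beta_!B$. Its structure maps are $\beta_!A \x{\beta_!j}{\lrar} \beta_!B$ and the unique map $\gamma_!\emptyset \cong \emptyset \lrar \beta_!B$.
	\item The section $u_B$ is given by $u_B(0,1)=B$, $u_B(1,0)=\emptyset$ and $u_B(1,1)=\beta_!B$, with the identity as its $\beta$-structure map.
\end{enumerate}
There is an evident map of sections $v \lrar u_B$ whose components are $j$, $\Id_\emptyset$ and $\Id_{\beta_!B}$. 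Each component is a trivial cofibration, so this map is a trivial cofibration in $\Sec(\G)^{\inj}$.

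Next I unwind maps out of these sections, which works because $\emptyset$ is initial and $\gamma_!$ preserves it. A map of sections $v \lrar s$ is exactly a pair $(a, b^\flat)$ making the square $\beta_!A \lrar \beta_!B \lrar s(1,1)$ agree with $\beta_!A \x{\beta_!a}{\lrar} \beta_!s(0,1) \lrar s(1,1)$. By adjunction, that compatibility is precisely the commutativity of the original square. Similarly, a map $u_B \lrar s$ is a pair consisting of some $\ell : B \lrar s(0,1)$ together with a map $\beta_!B \lrar s(1,1)$ that is forced to equal the composite $\beta_!B \x{\beta_!\ell}{\lrar} \beta_!s(0,1) \lrar s(1,1)$.

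Since $s$ is injective fibrant, $s \lrar *$ has the right lifting property against $v \lrar u_B$, so the map $v \lrar s$ extends to a map $u_B \lrar s$. Its $(0,1)$-component $\ell$ satisfies $\ell \circ j = a$. Its $(1,1)$-component is $b^\flat$, and also equals the composite forced by $\ell$; adjointly, this says $\beta^{*}(\text{structure}) \circ \ell = b$. Hence $\ell$ solves the original lifting problem.

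The only points needing care are checking that $v$ and $u_B$ satisfy the section compatibility constraints, which is trivial for a three-object diagram with no composable nontrivial pairs, and identifying the correspondence under the adjunction $\beta_! \dashv \beta^*$. I expect no real obstacle.
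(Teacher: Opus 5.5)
Your argument is correct, and it uses the same device as the paper: recast the lifting problem in $\G(0,1)$ as a lifting problem for $s \lrar *$ against an injective trivial cofibration between sections that are $\emptyset$ at $(1,0)$. The difference is in the choice of sections, and it matters.

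The paper takes $s' = (A,\emptyset,\beta_!A) \lrar s'' = (B,\emptyset,\beta_!B)$ and builds $s' \lrar s$ from the top map $a$ alone. The bottom map $b$ never enters. The resulting $\ell : B \lrar s(0,1)$ therefore satisfies $\ell \circ j = a$, but nothing forces $\sigma \circ \ell = b$, where $\sigma : s(0,1) \lrar \beta^{*}s(1,1)$ is the structure map. As written, that argument only shows that $s(0,1)$ is fibrant.

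You instead put $\beta_!B$ in the $(1,1)$-slot of the source section and use $b^\flat$ as the $(1,1)$-component of $v \lrar s$. This records the commutativity of the original square in the compatibility condition for $v \lrar s$. Since $v(1,1) \lrar u_B(1,1)$ is the identity, the lift's $(1,1)$-component is forced to be $b^\flat$, and this gives the lower triangle. So your choice of sections closes a step that the paper's version leaves open.

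A small side benefit is that your trivial cofibration $v \lrar u_B$ is the identity at $(1,1)$. You therefore do not even need $\beta_!$ to preserve trivial cofibrations. In your last step, ``$\beta^{*}(\text{structure})$'' should be read as the adjoint structure map $\sigma = \beta^{*}(\mathrm{str}) \circ \eta$, which is clearly what you intend.
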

\begin{proof} By symmetry, it suffices to show that $s(0,1) \lrar \beta^{*}s(1,1)$ is a fibration in $\G(0,1)$. Suppose given a commutative square in $\G(0,1)$ of the form 
	\begin{equation}\label{eq:injfib}
	\xymatrix{
		A \ar[r]\ar[d] & s(0,1) \ar[d] \\
		B  \ar[r] & \beta^{*}s(1,1),  \\
	}
	\end{equation}
	where $A \lrar B$ is a trivial cofibration. We canonically define two sections $s', s''$ by setting $s'(0,1) = A$, $s'(1,1) = \beta_! A$, $s'(1,0) = \emptyset$; and 
	$s''(0,1) = B$, $s''(1,1) = \beta_! B$, $s''(1,0) = \emptyset$. The map $A \lrar B$ induces a map of sections $s' \lrar s''$, which is clearly an injective trivial cofibration. Moreover, there is a canonical map of sections $s' \lrar s$ with structure maps given by $$s'(0,1) = A \lrar s(0,1), \;\; s'(1,1) = \beta_! A \lrar \beta_! s(0,1) \lrar  s(1,1) \;\; \text{and} \;\; s'(1,0) = \emptyset \lrar s(1,0).$$ Now, since $s$ is injective fibrant, there exists a map $s'' \lrar s$ lifting $s' \lrar s$ along the injective trivial cofibration $s' \lrar s''$. In particular, we obtain a map $B=s''(0,1) \lrar s(0,1)$, which provides a lift for the square \eqref{eq:injfib}.
\end{proof}

\begin{rem}\label{r:car}
	Let $\F: [1] \times [1] \lrar \ModCat$ be a diagram of combinatorial model categories such that for each morphism $\alp$ in $[1] \times [1]$, the right adjoint $\alp^*$ preserves weak equivalences. Since $\L: \F(0,0) \lrar \Sec(\F|_{\I})^{\inj}$ sends cofibrant objects to coCartesian sections, it follows that for any cofibrant object $X \in \F(0,0)$, the derived unit $X \lrar \RR\R^{\coc}\LL \L^{\coc}(X)$ is a weak equivalence if and only if $X \lrar \RR \R \LL \L(X)$ is a weak equivalence. Now, for a section $s: \I \lrar \int_{\I} \F$, the value $\RR\R(s)$ can be identified with the homotopy fiber product $\holim_{i \in \I}\alp_j^*s(i)$, by Observation \ref{ob:injfib}. In conclusion, if $X \in \F(0,0)$ is  cofibrant, then the derived unit map $X \lrar \RR \R^{\coc} \LL \L^{\coc}(X)$ is a weak equivalence if and only if the square
	$$ \xymatrix{
		X \ar[r]\ar[d] & \alp_{(0,1)}^*(\alp_{(0,1)})_!X \ar[d] \\
		\alp_{(1,0)}^*(\alp_{(1,0)})_!X  \ar[r] & \alp_{(1,1)}^*(\alp_{(1,1)})_!X  \\
	}$$
	is homotopy Cartesian in $\F(0,0)$.
\end{rem}

\newpage

\bibliographystyle{amsplain}

\end{document}